\documentclass{article}

\usepackage{ssArxiv}

\usepackage[utf8]{inputenc}

\usepackage[title]{appendix}

\usepackage{amsmath,amsbsy,amsgen,amscd,amssymb,amsthm,amsfonts}
\usepackage{mathtools}
\usepackage{bm}
\usepackage{enumitem}

\usepackage{microtype}
\usepackage{hyperref}
\usepackage[nameinlink]{cleveref}
\usepackage{url} 
\usepackage{graphicx}
\usepackage{algorithm,algorithmic}
\usepackage{natbib}
\usepackage{xspace}

\newtheorem{lemma}{Lemma}
\newtheorem{corollary}{Corollary}
\newtheorem{remark}{Remark}
\newtheorem{theorem}{Theorem}

\crefname{assumption}{assumption}{assumptions}

\newcommand{\norm}[1]{\|#1\|}
\newcommand{\dom}{\mathrm{dom}}
\newcommand{\prox}{\mathrm{prox}}
\newcommand{\proj}{\mathrm{proj}}
\newcommand{\dist}{\mathrm{dist}}
\newcommand{\ip}[2]{\langle#1,#2\rangle}
\newcommand{\R}{\mathbb{R}}

\newcommand{\algo}{\textsc{AdapTos}\xspace}

\newcommand{\Prob}{\hyperref[eqn:model-problem]{Problem~(1)}\xspace}
\newcommand{\Probsto}{\hyperref[eqn:model-problem-stochastic]{Problem~\eqref{eqn:model-problem-stochastic}}\xspace}
\newcommand{\ProbAdapTos}{\hyperref[eqn:model-problem-adaptos]{Problem~\eqref{eqn:model-problem-adaptos}}\xspace}

\makeatletter
\def\blfootnote{\gdef\@thefnmark{}\@footnotetext}
\makeatother
  
\setlength{\parindent}{0em}
\setlength{\parskip}{0.75em}

\title{Three Operator Splitting with Subgradients, \\Stochastic Gradients, and Adaptive Learning Rates}
\author{%
\name Alp Yurtsever$^\star$ \email{alp.yurtsever@umu.se}\\
\addr{Umeå University, Umeå, Sweden} \\[0.5em]
\name Alex Gu$^\star$ \email{gua@mit.edu}\\
\name Suvrit Sra \email{suvrit@mit.edu}\\
\addr{Massachusetts Institute of Technology, Cambridge, MA, USA} 
}

\begin{document}

\maketitle

\blfootnote{Alp Yurtsever and Alex Gu contributed equally to this paper. The paper is based primarily on the work done while Alp Yurtsever was at Massachusetts Institute of Technology.}

\begin{abstract}
Three Operator Splitting (TOS) \citep{davis2017three} can minimize the sum of multiple convex functions effectively when an efficient gradient oracle or proximal operator is available for each term. This requirement often fails in machine learning applications: \emph{(i)}~instead of full gradients only stochastic gradients may be available; and  \emph{(ii)}~instead of proximal operators, using subgradients to handle complex penalty functions may be more efficient and realistic. Motivated by these concerns, we analyze three potentially valuable extensions of TOS. The first two permit using subgradients and stochastic gradients, and are shown to ensure a $\smash{\mathcal{O}(1/\sqrt{t})}$ convergence rate. The third extension \algo endows TOS with adaptive step-sizes. For the important setting of optimizing a convex loss over the intersection of convex sets \algo attains universal convergence rates, \textit{i.e.}, the rate adapts to the \emph{unknown} smoothness degree of the objective function. We compare our proposed methods with competing methods on various applications.  
\end{abstract}

\section{Introduction}
We study convex optimization problems of the form
\begin{equation}
\label{eqn:model-problem}
\min_{x \in \R^n} \quad \phi(x) := f(x) + g(x) + h(x),
\end{equation}
where $\smash{f: \mathbb{R}^n \to \mathbb{R}}$ and $\smash{g, h: \mathbb{R}^{n} \to \mathbb{R} \cup \{+\infty\}}$ are proper, lower semicontinuous and convex functions.  Importantly, this template captures constrained problems via indicator functions. To avoid pathological examples, we assume that the relative interiors of $\smash{\dom(f)}$, $\smash{\dom(g)}$ and $\smash{\dom(h)}$ have a nonempty intersection. 

\Prob is motivated by a number of applications in machine learning, statistics, and signal processing, where the three functions comprising the objective $\phi$ model data fitting, structural priors, or decision constraints. Examples include overlapping group lasso~\citep{yuan2011efficient}, isotonic regression~\citep{tibshirani2011nearly}, dispersive sparsity~\citep{el2015totally}, graph transduction \citep{shivanna2015spectral}, learning with correlation matrices~\citep{higham2016anderson}, and multidimensional total variation denoising~\citep{barbero2018modular}.

An important technique for addressing composite problems is \emph{operator splitting}~\citep{bauschke2011convex}. 
However, the basic proximal-(sub)gradient method may be unsuitable for \Prob since it requires the prox-operator of $g+h$, computing which may be vastly more expensive than individual prox-operators of $g$ and $h$. An elegant, recent method, Three Operator Splitting (TOS, \citet{davis2017three}, see \Cref{alg:three-operator-splitting}) offers a practical choice for solving \Prob when $f$ is smooth. 
Importantly, at each iteration, TOS evaluates the gradient of $f$ and the proximal operators of $g$ and $h$ only once. 
Moreover, composite problems with more than three functions can be reformulated as an instance of \Prob in a product-space and solved by using TOS. This is an effective method as long as each function has an efficient gradient oracle or proximal operator (see \Cref{sec:related-works}). 

Unfortunately, TOS is not readily applicable to many optimization problems that arise in machine learning. Most important among those are problems where only access to stochastic gradients is feasible, \textit{e.g.}, when performing large-scale empirical risk minimization and online learning. Moreover, prox-operators for some complex penalty functions are computationally expensive and it may be more efficient to instead use  subgradients. For example, proximal operator for the maximum eigenvalue function that appears in dual-form semidefinite programs (\textit{e.g.}, see Section~6.1 in \citep{ding2019optimal}) may require computing a full eigendecomposition. In contrast, we can form a subgradient by computing only the top eigenvector via power method or Lanczos algorithm. 

\textbf{Contributions}. With the above motivation, this paper contributes three key extensions of TOS. We tackle nonsmoothness in \Cref{sec:algorithm-nonsmooth} and stochasticity in \Cref{sec:algorithm-stochastic}. These two extensions enable us to use subgradients and stochastic gradients of $f$ (see \Cref{sec:related-works} for a comparison with related work), and satisfy a  $\smash{\mathcal{O}(1/\sqrt{T})}$ error bound in function value after $T$ iterations. The third main contribution is \algo in \Cref{sec:adaptos}. This extension provides an adaptive step-size rule in the spirit of AdaGrad \citep{duchi2011adaptive,levy2017online} for an important subclass of \Prob. Notably, for optimizing a convex loss over the intersection of two convex sets, \algo ensures universal convergence rates. That is, \algo implicitly adapts to the \emph{unknown} smoothness degree of the problem, and ensures a $\smash{\tilde{\mathcal{O}}(1/\sqrt{t})}$ convergence rate when the problem is nonsmooth but the rate improves to $\smash{\tilde{\mathcal{O}}(1/t)}$ if the problem is smooth and a solution lies in the relative interior of the feasible set. 

In \Cref{sec:numerical-experiments}, we discuss empirical performance of our methods by comparing them against present established methods on various benchmark problems from COPT Library \citep{copt} including the overlapping group lasso, total variation deblurring, and sparse and low-rank matrix recovery. We also test our methods on nonconvex optimization by training a neural network model. We present more experiments on isotonic regression and portfolio optimization in the supplements. 

\textbf{Notation.} We denote a solution of \Prob by $x_\star$ and $\phi_\star := \phi(x_\star)$. The distance between a point $x \in \R^n$ and a closed and convex set $\mathcal{G} \subseteq \R^n$ is $\dist(x,\mathcal{G}) := \min_{y \in \mathcal{G}} \norm{x - y}$; the projection of $x$ onto $\mathcal{G}$ is given by $\proj_{\mathcal{G}}(x) := \arg\min_{y \in \mathcal{G}} \norm{x - y}$. 
The prox-operator of a function $g: \R^n \to \R \cup \{+\infty\}$ is defined by $\prox_{g}(x) := \arg\min_{y \in \R^n} \{ g(y) + \frac{1}{2} \norm{x - y}^2 \}$. 
The indicator function of $\mathcal{G}$ gives $0$ for all $x \in \mathcal{G}$ and $+\infty$ otherwise. 
Clearly, the prox-operator of an indicator function is the projection onto the corresponding set. 

\section{Background and related work}
\label{sec:related-works}
TOS, proposed recently by \citet{davis2017three}, can be seen as a generic extension of various operator splitting schemes, including the forward-backward splitting, Douglas-Rachford splitting, forward-Douglas-Rachford splitting~\citep{briceno2015forward}, and the generalized forward-backward splitting~\citep{raguet2013generalized}. It covers these aforementioned approaches as special instances when the terms $f,g$ and $h$ in \Prob are chosen appropriately.  Convergence of TOS is well studied when $f$~has Lipschitz continuous gradients. It ensures $\smash{\mathcal{O}(1/t)}$ convergence rate in this setting, see \citep{davis2017three} and \citep{pedregosa2016convergence} for details. 

Other related methods that can be used for \Prob when $f$ is smooth are the primal-dual hybrid gradient (PDHG) method \citep{condat2013primal,vu2013splitting} and the primal-dual three operator splitting methods in \citep{yan2018new} and \citep{salim2020dualize}. These methods can handle a more general template where $g$ or $h$ is composed with a linear map, however, they require $f$ to be smooth. 
The convergence rate of PDHG is studied in \citep{chambolle2016ergodic}.

\textit{Nonsmooth setting.} We are unaware of any prior result that permits using subgradients in TOS (or in other methods that can use the prox-operator of $g$ and $h$ separately for \Prob). The closest match is the proximal subgradient method which applies when $h$ is removed from \Prob, and it is covered by our nonsmooth TOS as a special case. 

\textit{Stochastic setting.} %
There are multiple attempts to devise a stochastic TOS in the literature. 
\citet{yurtsever2016stochastic} studied \Prob under the assumption that $f$ is smooth and strongly convex, and an unbiased gradient estimator with bounded variance is available. Their stochastic TOS has a guaranteed $\mathcal{O}(1/t)$ convergence rate. In \citep{cevher2018stochastic}, they drop the strong convexity assumption, instead they assume that the variance is summable. They show asymptotic convergence with no guarantees on the rate. 
Later, \citet{pedregosa2019proximal} proposed a stochastic variance-reduced TOS and analyzed its non-asymptotic convergence guarantees. Their method gets $\mathcal{O}(1/t)$ convergence rate when $f$ is smooth. The rate becomes linear if $f$ is smooth and strongly convex and $g$ (or $h$) is also smooth. 
Recently, \citet{yurtsever2021three} studied TOS on problems where $f$ can be nonconvex and showed that the method finds a first-order stationary point with $\smash{\mathcal{O}(1/\sqrt[3]{t})}$ convergence rate under a diminishing variance assumption. They increase the batch size over the iterations to satisfy this assumption. 

None of these prior works cover the broad template we consider: $f$~is smooth or Lipschitz continuous and the stochastic first-order oracle has bounded variance. To our knowledge, our paper gives the first analysis for stochastic TOS without strong convexity assumption or variance reduction. 

Other related methods are the stochastic PDHG in \citep{zhao2018stochastic}, the decoupling method in \citep{mishchenko2019stochastic}, the stochastic primal-dual method in \citep{zhao2019optimal}, and the stochastic primal-dual three operator splitting in \citep{salim2020dualize}. The method in \citep{zhao2019optimal} can be viewed as an extension of stochastic ADMM \citep{ouyang2013stochastic,azadi2014towards} from the sum of two terms to three terms in the objective. Similar to the existing stochastic TOS variants, these methods either assume strong convexity or require variance-reduction. 

\textit{Adaptive step-sizes.} The standard writings of TOS and PDHG require the knowledge of the smoothness constant of $f$ for the step-size. Backtracking line-search strategies (for finding a suitable step-size when the smoothness constant is unknown) are proposed for PDHG in \citep{malitsky2018first} and for TOS in \citep{pedregosa2018adaptive}. 
These line-search strategies are significantly different than our adaptive learning rate. Importantly, these methods work only when $f$ is smooth. They require extra function evaluations, and are thus not suitable for stochastic optimization. And their goal is to estimate the \emph{smoothness constant}. In contrast, our goal is to design an algorithm that adapts to the unknown \emph{smoothness degree}. Our method does not require function evaluations, and it can be used in smooth, nonsmooth, or stochastic settings. 

At the heart of our method lie adaptive online learning algorithms \citep{duchi2011adaptive,rakhlin2013optimization} together with online to offline conversion techniques \citep{levy2017online,cutkosky2019anytime}. Similar methods appear in the literature for other problem templates with no constraint or a single constraint in \citep{levy2017online,levy2018online,kavis2019unixgrad,cutkosky2019anytime,bach2019universal}. Our method extends these results to optimization over the intersection of convex sets. When $f$ is nonsmooth, \algo ensures a $\smash{\tilde{\mathcal{O}}(1/\sqrt{t})}$ rate, whereas the rate improves to $\smash{\tilde{\mathcal{O}}(1/t)}$ if $f$ is smooth and there is a solution in the relative interior of the feasible set.

\textbf{TOS for more than three functions.} 
TOS can be used for solving problems with more than three convex functions by a product-space reformulation technique \citep{briceno2015forward}. Consider  
\begin{equation} \label{eqn:problem-multi-term}
\min_{x \in \R^d} \quad \sum_{i=1}^q \phi_i(x),
\end{equation}
where each component $\phi_i:\R^d \to \R \cup \{+\infty\}$ is a proper, lower semicontinuous and convex function. Without loss of generality, suppose $\phi_1, \ldots, \phi_p$ are prox-friendly.
Then, we can reformulate \eqref{eqn:problem-multi-term} in the product-space $\smash{\R^{d \times (p+1)}}$ as 
\begin{equation}\label{eqn:problem-reformulated}
\min_{(x_0, x_1, \ldots, x_p) \in \R^{d\times(p+1)}} \quad \sum_{i = 1}^p \phi_i(x_i) + \sum_{i = p+1}^{q} \phi_{i}(x_0) \quad \text{subject to} \quad x_0 = x_1 = \ldots = x_p.
\end{equation}
This is an instance of \Prob with $n=d\times(p+1)$ and $x = (x_0,x_1,\ldots,x_p)$. We can~choose $g(x)$ as the indicator of the equality constraint, $f(x) = \sum_{i = p+1}^{q} \phi_{i}(x_0)$, and $h(x) = \sum_{i = 1}^p \phi_i(x_i)$. Then, the (sub)gradient of $f$ is the sum of (sub)gradients of $\phi_{p+1},\ldots,\phi_q$; $\prox_g$ is a mapping that averages $x_0,x_1,\ldots,x_p$; and $\prox_h$ is the concatenation of the individual $\prox$-operators of $\phi_1,\ldots,\phi_p$.

TOS has been studied only for problems with smooth $f$, and this forces us to assign all nonsmooth components $\phi_i$ in \eqref{eqn:problem-multi-term} to the proximal term $h$ in \eqref{eqn:problem-reformulated}. In this work, by enabling subgradient steps for nonsmooth $f$, we provide the flexibility to choose how to process each nonsmooth component $\phi_i$ in \eqref{eqn:problem-reformulated}, either by its proximal operator through $h$ or by its subgradient via $f$. 

\section{TOS for Nonsmooth Setting}
\label{sec:algorithm-nonsmooth}

\Cref{alg:three-operator-splitting} presents the generalized TOS for \Prob. It recovers the standard version in \citep{davis2017three} if we choose $u_t = \nabla f(z_t)$ when $f$ is smooth. For convenience, we define the mapping
\begin{align}\label{eqn:TOS-operator}
    \texttt{TOS}_\gamma(y, u) := y - \prox_{\gamma g}(y) + \prox_{\gamma h}\big(2 \cdot \prox_{\gamma g}(y) - y - \gamma u\big)
\end{align}
which represents one iteration of \Cref{alg:three-operator-splitting}.

The first step of the analysis is the fixed-point characterization of TOS. The following lemma is a straightforward extension of Lemma~2.2 in \citep{davis2017three} to permit subgradients. The proof is similar to \citep{davis2017three}, we present it in the supplementary material for completeness. %

\begin{algorithm}[tb]
   \caption{Three Operator Splitting (TOS)}
   \label{alg:three-operator-splitting}
\begin{algorithmic}
\vspace{0.25em}
   \STATE {\bfseries Input:} Initial point $y_0 \in \mathbb{R}^n$, step-size sequence $\{\gamma_t\}_{t=0}^T$ \\[0.25em]
   \FOR{$t=0,1,2,\ldots,T$}
   \STATE $z_{t} = \prox_{\gamma_t g} (y_t)$
   \STATE Choose an update direction $u_t \in \mathbb{R}^n$ \hfill  {\color{darkgreen}\COMMENT{\footnotesize $u_t = \nabla f(z_t)$ captures the standard version of TOS}}~~~
   \STATE $x_{t} = \prox_{\gamma_t h} (2z_t - y_t - \gamma_t u_t)$
   \STATE $y_{t+1} = y_t - z_t + x_t $
   \ENDFOR\\[0.25em]
  \STATE {\bfseries Return:} Ergodic sequence $\bar{x}_t$ and $\bar{z}_t$ defined in \eqref{eqn:ergodic-sequence}
   \vspace{0.25em}
\end{algorithmic}
\end{algorithm}

\begin{lemma}[Fixed points of TOS]
\label{lem:fixed-point-characterization}
Let $\gamma > 0$. Then, there exists a subgradient $\smash{u \in \partial f(\prox_{\gamma g}(y))}$ that satisfies $\smash{\emph{\texttt{TOS}}_\gamma(y,u) = y}$ if and only if $\smash{\prox_{\gamma g}(y)}$ is a solution of \Prob.
\end{lemma}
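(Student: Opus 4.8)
The plan is to unwind a single iteration of $\texttt{TOS}_\gamma$ into the first-order optimality conditions of the two prox steps and the subgradient step, and then add them. Write $z := \prox_{\gamma g}(y)$ and, for a given $u$, $x := \prox_{\gamma h}(2z - y - \gamma u)$, so that $\texttt{TOS}_\gamma(y,u) = y - z + x$. The first observation is purely algebraic: $\texttt{TOS}_\gamma(y,u) = y$ holds if and only if $x = z$, i.e. a fixed point is exactly a point at which the two prox steps agree.

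Next I would convert the three steps into subgradient inclusions. Optimality of the prox defining $z = \prox_{\gamma g}(y)$ gives $\gamma^{-1}(y - z) \in \partial g(z)$. Optimality of the prox defining $x = \prox_{\gamma h}(2z - y - \gamma u)$ gives $\gamma^{-1}(2z - y - \gamma u - x) \in \partial h(x)$, which at a fixed point ($x = z$) simplifies to $\gamma^{-1}(z - y) - u \in \partial h(z)$. Adding these two inclusions to $u \in \partial f(z)$, the $\gamma^{-1}(y - z)$ terms and the $u$ terms cancel, leaving $0 \in \partial f(z) + \partial g(z) + \partial h(z)$. By the assumed qualification on the relative interiors of $\dom(f)$, $\dom(g)$, $\dom(h)$, one has $\partial f(z) + \partial g(z) + \partial h(z) = \partial \phi(z)$, hence $0 \in \partial\phi(z)$, i.e. $z = \prox_{\gamma g}(y)$ solves \Prob. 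This settles the ``only if'' direction.

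For the converse I would run the same chain backwards. If $z := \prox_{\gamma g}(y)$ solves \Prob, then $0 \in \partial\phi(z) = \partial f(z) + \partial g(z) + \partial h(z)$; combining this with the relation $\gamma^{-1}(y - z) \in \partial g(z)$ already forced by the prox, one extracts a subgradient $u \in \partial f(z)$ with $-u - \gamma^{-1}(y - z) \in \partial h(z)$. Reading the prox optimality condition for $h$ in reverse then shows that this particular $u$ yields $x = \prox_{\gamma h}(2z - y - \gamma u) = z$, so $\texttt{TOS}_\gamma(y,u) = y - z + x = y$.

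The prox optimality conditions and the cancellation in the sum are routine. The step that needs care is the converse: one must exhibit a concrete $u \in \partial f(z)$, not merely an abstract decomposition of $0$, and the $\partial g$-component used in that decomposition must be the specific subgradient $\gamma^{-1}(y - z)$ dictated by $y$ (equivalently, picking the right preimage of the solution under $\prox_{\gamma g}$); reconciling this with the appeal to the domain qualification that identifies $\partial\phi$ with $\partial f + \partial g + \partial h$ is the crux. Since \Cref{lem:fixed-point-characterization} is only a mild extension of Lemma~2.2 of \citet{davis2017three}, I expect the whole argument to follow theirs verbatim with $\nabla f(z)$ replaced throughout by a subgradient $u \in \partial f(z)$.
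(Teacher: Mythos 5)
Your forward direction is essentially the paper's: unpack the two prox optimality conditions at a fixed point (where $x=z$), add them to $u\in\partial f(z)$, and conclude $0\in\partial f(z)+\partial g(z)+\partial h(z)\subseteq\partial\phi(z)$. One small remark: for this direction only the inclusion ``$\subseteq$'' is needed, which holds without any qualification; the relative-interior condition is only relevant to the converse, where one must decompose an element of $\partial\phi(z)$ into a sum.

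The converse is where your argument has a genuine gap, and it is precisely the point you yourself flag as ``the crux.'' From $0\in\partial f(z)+\partial g(z)+\partial h(z)$ you obtain \emph{some} triple $u+v+w=0$ with $v\in\partial g(z)$, but nothing forces $v$ to equal the particular subgradient $\gamma^{-1}(y-z)$ selected by the given $y$, so the $u$ you want to ``extract'' need not exist. Indeed, the fixed-$y$ version of the converse is false: take $n=1$, $f=h=0$, $g=|\cdot|$, so the unique solution is $z=0$; the point $y=\gamma/2$ satisfies $\prox_{\gamma g}(y)=0$, yet the only admissible $u$ is $0$ and $\texttt{TOS}_\gamma(y,0)=y-0+(2\cdot 0-y)= 0\neq y$. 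The paper's proof avoids this by \emph{not} keeping $y$ fixed: starting from an optimality certificate $u+v+w=0$ it \emph{defines} $y:=z+\gamma v$, checks that $z=\prox_{\gamma g}(y)$ for that $y$, and then uses uniqueness of $\prox_{\gamma h}$ to get $x=z$, so that this constructed $y$ is a fixed point. In other words, what is actually proved (as in Lemma~2.2 of \citet{davis2017three}) is that the solution set equals the image under $\prox_{\gamma g}$ of the fixed-point set --- an existential statement over preimages $y$ --- not that every $y$ whose prox is a solution is itself a fixed point. To repair your converse you should either adopt that quantifier structure and construct $y=z+\gamma v$ as the paper does, or restrict to the case where $\partial g(z)$ is a singleton so that $v=\gamma^{-1}(y-z)$ is forced.
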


When $f$ is $L_f$-smooth, \texttt{TOS} with $u_t = \nabla f(z_t)$ is known to be an averaged operator\footnote{An operator $\texttt{T}:\mathbb{R}^n \to \mathbb{R}^n$ is $\omega$-averaged if $\norm{\texttt{T}x - \texttt{T}y}^2 \leq \norm{x-y}^2 - \frac{1-\omega}{\omega} \norm{(x-\texttt{T}x)-(y-\texttt{T}y) }^2$ for some $\omega \in (0,1)$ for all $x,y \in \mathbb{R}^n$.} %
if $\gamma \in (0,2/L_f)$ (see Proposition~2.1 in \citep{davis2017three}) and the analysis in prior work is based on this property. In particular, averagedness implies Fej\'er monotonicity, \textit{i.e.}, that $\norm{y_t - y_\star}$ is non-increasing, where $y_\star$ denotes a fixed point of \texttt{TOS}. 
However, when $f$ is nonsmooth and $u_t$ is replaced with a subgradient, \texttt{TOS} operator is no longer averaged and the standard analysis fails. One of our key observations is that $\norm{y_t - y_\star}$ remains bounded even-though we loose averagedness and Fej\'er monotonicity in this setting, see \Cref{thm:nonsmooth-y-bounded} in the supplements.

\textbf{Ergodic sequence.} 
Convergence of operator splitting methods are often given in terms of ergodic (averaged) sequences. This strategy requires maintaining the running averages of $z_t$ and $x_t$:
\begin{align}\label{eqn:ergodic-sequence}
    \bar{x}_t = \frac{1}{t+1} \sum_{\tau = 0}^t x_\tau
    \qquad \text{and} \qquad
    \bar{z}_t = \frac{1}{t+1} \sum_{\tau = 0}^t z_\tau. 
\end{align}
Clearly, we do not need to store the history of $x_t$ and $z_t$ to maintain these sequences. In practice, the last iterate often converges faster than the ergodic sequence. We can evaluate the objective function at both points and return the one with the smaller value.

We are ready to present convergence guarantees of TOS for the nonsmooth setting. 

\begin{theorem}
\label{thm:convergence-Lipschitz}
Consider \Prob and %
employ TOS (\Cref{alg:three-operator-splitting}) with the update directions and step-size chosen as
\begin{equation}
    u_t \in \partial f(z_t)~~~~ \text{and}~~~~ \gamma_t = \frac{\gamma_0}{\sqrt{T+1}} ~~\text{for some $\gamma_0 > 0$}, \quad \text{for $t = 0,1,\ldots,T$.}
\end{equation}
Assume that $\norm{u_t} \leq G_f$ for all $t$. 
Then, the following guarantees hold:
\begin{gather}
f(\bar{z}_T) + g(\bar{z}_T) + h(\bar{x}_T) - \phi_\star \leq \frac{1}{2\sqrt{T+1}} \left(\frac{D^2}{\gamma_0} + \gamma_0 G_f^2 \right) \label{eqn:thm1-objective}  \\
\text{and} \quad \norm{\bar{x}_T - \bar{z}_T} \leq  \frac{2}{T+1} \left( D + \gamma_0 G_f  \right), \quad \text{where} \quad D = \max\{\norm{y_0 - x_\star}, \norm{y_0 - y_\star}\}. \label{eqn:thm1-constraint} 
\end{gather}
\vspace{0em}
\end{theorem}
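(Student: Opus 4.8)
The standard convergence proof of TOS hinges on $\texttt{TOS}_\gamma(\cdot,\nabla f(\cdot))$ being an averaged operator, which yields Fej\'er monotonicity of $\{\|y_t-y_\star\|\}$; this breaks down the moment $u_t$ is merely a subgradient. The plan is therefore to bypass averagedness entirely and derive a self-contained one-step inequality directly from the two proximal optimality conditions, then average. Concretely, from $z_t=\prox_{\gamma_t g}(y_t)$ and $x_t=\prox_{\gamma_t h}(2z_t-y_t-\gamma_t u_t)$ one reads off $v_t^g:=(y_t-z_t)/\gamma_t\in\partial g(z_t)$ and $v_t^h:=(2z_t-y_t-\gamma_t u_t-x_t)/\gamma_t\in\partial h(x_t)$, while $u_t\in\partial f(z_t)$ by construction; a short computation gives the identities $y_t-y_{t+1}=z_t-x_t=\gamma_t s_t$ with $s_t:=u_t+v_t^g+v_t^h$, and $s_t-(v_t^g+v_t^h)=u_t$.

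The heart of the argument is the one-step bound. Put $\Delta_t:=f(z_t)+g(z_t)+h(x_t)-\phi_\star$. Summing the subgradient inequalities of $f$ and $g$ at $z_t$ and of $h$ at $x_t$, tested against $x_\star$, and writing $x_t-x_\star=(z_t-x_\star)+(x_t-z_t)$, gives $\Delta_t\le\langle s_t,z_t-x_\star\rangle+\langle v_t^h,x_t-z_t\rangle$. Now substitute $\gamma_t s_t=y_t-y_{t+1}$, decompose $z_t-x_\star=(y_t-x_\star)-(y_t-z_t)$, and apply the three-point identity $2\langle y_t-y_{t+1},y_t-x_\star\rangle=\|y_t-y_{t+1}\|^2+\|y_t-x_\star\|^2-\|y_{t+1}-x_\star\|^2$ to the leading piece --- this is the term that telescopes. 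Everything that remains is $O(\gamma_t^2)$ in inner products among $s_t,v_t^g,v_t^h$; collecting them and using $\|y_t-y_{t+1}\|^2=\gamma_t^2\|s_t\|^2$ together with $s_t-(v_t^g+v_t^h)=u_t$, a completion of squares reduces them to $\tfrac{\gamma_t}{2}\big(\|u_t\|^2-\|v_t^g+v_t^h\|^2\big)\le\tfrac{\gamma_t}{2}G_f^2$. This yields
\[
\Delta_t\ \le\ \frac{1}{2\gamma_t}\big(\|y_t-x_\star\|^2-\|y_{t+1}-x_\star\|^2\big)+\frac{\gamma_t}{2}\,G_f^2 .
\]

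With $\gamma_t\equiv\gamma:=\gamma_0/\sqrt{T+1}$, summing over $t=0,\dots,T$ telescopes the first term and $\|y_{T+1}-x_\star\|^2\ge0$ is dropped; dividing by $T+1$ and using Jensen's inequality for the convex $f,g,h$ at $\bar z_T,\bar x_T$ gives $f(\bar z_T)+g(\bar z_T)+h(\bar x_T)-\phi_\star\le\tfrac{1}{T+1}\sum_t\Delta_t\le\tfrac{\|y_0-x_\star\|^2}{2\gamma(T+1)}+\tfrac{\gamma}{2}G_f^2$, and plugging in $\gamma$ with $\|y_0-x_\star\|\le D$ produces \eqref{eqn:thm1-objective}. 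For the feasibility gap, telescoping the $y$-recursion gives $\bar x_T-\bar z_T=\tfrac{1}{T+1}\sum_{t=0}^T(x_t-z_t)=\tfrac{1}{T+1}(y_{T+1}-y_0)$, so $\|\bar x_T-\bar z_T\|\le\tfrac{1}{T+1}\big(\|y_{T+1}-y_\star\|+\|y_0-y_\star\|\big)$ where $y_\star$ is a TOS fixed point with $\prox_{\gamma g}(y_\star)=x_\star$ (\Cref{lem:fixed-point-characterization}). The only ingredient I would import rather than reprove is the boundedness $\|y_t-y_\star\|^2\le\|y_0-y_\star\|^2+O(t\gamma^2 G_f^2)$ from \Cref{thm:nonsmooth-y-bounded} --- itself a consequence of firm nonexpansiveness of the two prox maps plus monotonicity of $\partial f$ (which kills the term $\langle z_t-z_\star,u_t-u_\star\rangle\ge0$, leaving only an $O(\gamma^2)$ drift per step). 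With $\gamma=\gamma_0/\sqrt{T+1}$ this gives $\|y_{T+1}-y_\star\|\le D+2\gamma_0 G_f$, hence $\|\bar x_T-\bar z_T\|\le\tfrac{2}{T+1}(D+\gamma_0 G_f)$, which is \eqref{eqn:thm1-constraint}.

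The main obstacle is the one-step inequality: with averagedness and Fej\'er monotonicity gone, one must extract the telescoping structure purely from the prox optimality conditions, and the bookkeeping has to be arranged so that the subgradient terms --- naturally anchored at $z_t$ (for $f,g$) and at $x_t$ (for $h$) --- recombine into inner products against the $y$-iterates with the spurious cross terms absorbed by $\|y_t-y_{t+1}\|^2$. Once that is done, recovering the precise constants in \eqref{eqn:thm1-objective} and \eqref{eqn:thm1-constraint} is just the completion-of-squares step plus the choice $\gamma=\gamma_0/\sqrt{T+1}$.
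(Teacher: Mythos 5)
Your proposal is correct and follows essentially the same route as the paper: a one-step telescoping inequality extracted from the two prox optimality conditions plus convexity of $f$ anchored at $z_t$, with the residual bounded by $\tfrac{\gamma}{2}\norm{u_t}^2 \le \tfrac{\gamma}{2}G_f^2$, followed by summation, Jensen, and an appeal to \Cref{thm:nonsmooth-y-bounded} for the feasibility gap. The only difference is cosmetic bookkeeping — you complete the square via the explicit subgradients $v_t^g, v_t^h$ (even retaining a slightly sharper $-\tfrac{\gamma_t}{2}\norm{v_t^g+v_t^h}^2$ term before discarding it), whereas the paper splits the same computation into a Part-1/Part-2 structure with Young's inequality absorbing $\tfrac{1}{2\gamma}\norm{x_t-z_t}^2$ against $-\tfrac{1}{2\gamma}\norm{y_{t+1}-y_t}^2$.
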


\begin{remark}
The boundedness of subgradients is a standard assumption in nonsmooth optimization. It is equivalent to assuming that $f$ is $G_f$-Lipschitz continuous on $\dom(g)$.
\end{remark}

If $D$ and $G_f$ are known, we can optimize the constants in \eqref{eqn:thm1-objective} by choosing $\gamma_0 = D/G_f$. This gives $f(\bar{z}_T) + g(\bar{z}_T) + h(\bar{x}_T) - \phi_\star \leq \mathcal{O}(D G_f / \sqrt{T})$ and $\norm{\bar{x}_T - \bar{z}_T} \leq \mathcal{O}(D/T)$. 

\begin{proof}[Proof sketch]
We start by writing the optimality conditions for the proximal steps for $z_t$ and $x_t$. Through algebraic modifications and by using convexity of $f$, $g$ and $h$, we obtain
\begin{align}
f(z_t)  + g(z_t) + h (x_t) - \phi_\star \leq \frac{1}{2\gamma} \norm{y_t - x_\star}^2 - \frac{1}{2\gamma} \norm{y_{t+1} - x_\star}^2  + \frac{\gamma}{2} \norm{u_t}^2.
\end{align}
$\norm{u_t} \leq G_f$ by assumption. Then, we average this inequality over $t = 0, 1 , \ldots, T$ and use Jensen's inequality to get \eqref{eqn:thm1-objective}. 

The bound in \eqref{eqn:thm1-constraint} is an immediate consequence of the boundedness of $\norm{y_{T+1}-y_\star}$ that we show in \Cref{thm:nonsmooth-y-bounded} in the supplementary material:
\begin{align}
\norm{y_{T+1} - y_\star} 
\leq \norm{y_0-y_\star} + 2 \gamma_0 G_f.
\end{align}
By definition, $\norm{\bar{x}_T-\bar{z}_T} = \frac{1}{T} \norm{y_{T+1} - y_0} \leq \frac{1}{T}(\norm{y_{T+1} - y_\star} + \norm{y_\star - y_0})$.
\end{proof}

\Cref{thm:convergence-Lipschitz} does not immediately yield convergence to a solution of \Prob because $f+g$ and $h$ are evaluated at different points in \eqref{eqn:thm1-objective}. 
Next corollary solves this issue.  

\begin{corollary}\label{cor:tos-nonsmooth}
We are interested in two particular cases of \Cref{thm:convergence-Lipschitz}: 

\emph{(i).} Suppose $h$ is $G_h$-Lipschitz continuous. Then, 
\begin{align}
\phi(\bar{z}_T) - \phi_\star 
& \leq  \frac{1}{2\sqrt{T+1}} \left(\frac{D^2}{\gamma_0}  + \gamma_0 G_f^2 \right) 
+ \frac{2G_h}{T+1} \left( D + \gamma_0 G_f \right).
\end{align}

\emph{(ii).} Suppose $h$ is the indicator function of a convex set $\mathcal{H} \subseteq \mathbb{R}^n$. Then, 
\begin{align}
f(\bar{z}_T) + g(\bar{z}_T) - \phi_\star & \leq \frac{1}{2\sqrt{T+1}} \left(\frac{D^2}{\gamma_0} + \gamma_0 G_f^2 \right) \\
\text{and} \quad \dist(\bar{z}_T,\mathcal{H}) & \leq \frac{2}{T+1} \left( D + \gamma_0 G_f \right).
\end{align}
\end{corollary}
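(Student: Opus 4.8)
The plan is to obtain both parts directly from \Cref{thm:convergence-Lipschitz}; the only work is to relate the quantity $f(\bar{z}_T)+g(\bar{z}_T)+h(\bar{x}_T)-\phi_\star$ controlled by \eqref{eqn:thm1-objective} to the actual objective gap, using the feasibility-type bound \eqref{eqn:thm1-constraint} on $\norm{\bar{x}_T-\bar{z}_T}$.

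\textbf{Case (i).} I would split the objective gap as
\[
\phi(\bar{z}_T) - \phi_\star = \big( f(\bar{z}_T) + g(\bar{z}_T) + h(\bar{x}_T) - \phi_\star \big) + \big( h(\bar{z}_T) - h(\bar{x}_T) \big).
\]
The first parenthesis is bounded by \eqref{eqn:thm1-objective}. For the second, $G_h$-Lipschitz continuity of $h$ (in particular $h$ is finite-valued, so $h(\bar{z}_T)<\infty$) gives $h(\bar{z}_T) - h(\bar{x}_T) \le G_h\norm{\bar{z}_T - \bar{x}_T}$, and \eqref{eqn:thm1-constraint} bounds $\norm{\bar{z}_T - \bar{x}_T}$ by $\tfrac{2}{T+1}(D+\gamma_0 G_f)$. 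Adding the two estimates yields the claimed inequality.

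\textbf{Case (ii).} The key observation is that $\bar{x}_T \in \mathcal{H}$. Indeed, when $h$ is the indicator of $\mathcal{H}$, the operator $\prox_{\gamma_t h}$ is exactly $\proj_{\mathcal{H}}$, so every iterate $x_t = \proj_{\mathcal{H}}(2z_t - y_t - \gamma_t u_t)$ lies in $\mathcal{H}$; since $\mathcal{H}$ is convex, the running average $\bar{x}_T$ also lies in $\mathcal{H}$, hence $h(\bar{x}_T)=0$. Likewise $x_\star$ is feasible, so $h(x_\star)=0$ and $\phi_\star = f(x_\star)+g(x_\star)$. Substituting $h(\bar{x}_T)=0$ into \eqref{eqn:thm1-objective} gives the first inequality. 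For the second, because $\bar{x}_T\in\mathcal{H}$ we have $\dist(\bar{z}_T,\mathcal{H}) \le \norm{\bar{z}_T - \bar{x}_T}$, and applying \eqref{eqn:thm1-constraint} finishes the proof.

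\textbf{Main obstacle.} There is no genuine obstacle here: both parts are routine bookkeeping on top of \Cref{thm:convergence-Lipschitz}. The only two points that need a moment's care are (a) that $h(\bar{z}_T)$ is finite in part (i), which follows from $h$ being globally Lipschitz, and (b) the justification that $\bar{x}_T\in\mathcal{H}$ in part (ii), which uses convexity (and closedness) of $\mathcal{H}$ together with the fact that the prox-operator of an indicator is the corresponding projection, so that every $x_t$ is feasible.
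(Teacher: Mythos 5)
Your proof is correct and follows essentially the same route as the paper's: part (i) uses the Lipschitz bound $h(\bar{z}_T)-h(\bar{x}_T)\le G_h\norm{\bar{z}_T-\bar{x}_T}$ combined with \eqref{eqn:thm1-objective} and \eqref{eqn:thm1-constraint}, and part (ii) uses $h(\bar{x}_T)=0$ together with $\dist(\bar{z}_T,\mathcal{H})\le\norm{\bar{z}_T-\bar{x}_T}$. Your additional justification that $\bar{x}_T\in\mathcal{H}$ (each $x_t$ is a projection onto the convex set $\mathcal{H}$, hence the average is feasible) is a detail the paper leaves implicit but is worth spelling out.
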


\begin{proof}
(i). Since $h$ is $G_h$-Lipschitz, $\phi(\bar{z}_T) \leq f(\bar{z}_T) + g(\bar{z}_T) + h(\bar{x}_T) + G_h \norm{\bar{x}_T - \bar{z}_T}$. \\[0.2em]
(ii). $h(\bar{x}_T) = 0$ since $\bar{x}_T \in \mathcal{H}$. Moreover, $\dist(\bar{z}_T,\mathcal{H}) := \inf_{x \in \mathcal{H}} \norm{\bar{z}_T - x} \leq \norm{\bar{z}_T - \bar{x}_T}$.
\end{proof}

\begin{remark}
We fix time horizon $T$ for the ease of analysis and presentation. In practice, we use $\gamma_t = {\gamma_0}/{\sqrt{t+1}}$. %
\end{remark}

\Cref{thm:convergence-Lipschitz} covers the case in which $g$ is the indicator of a convex set $\mathcal{G} \subseteq \mathbb{R}^n$. By definition, $\bar{z}_T \in \mathcal{G}$ and $x_\star \in \mathcal{G}$, hence $g(\bar{z}_T) = g(x_\star) = 0$. If both $g$ and $h$ are indicator functions, TOS gives an approximately feasible solution, in $\mathcal{G}$, and close to $\mathcal{H}$. We can also consider a stronger notion of approximate feasibility, measured by $\dist(\bar{z}_T,\mathcal{G} \cap \mathcal{H})$. However, this requires additional regularity assumptions on $\mathcal{G}$ and $\mathcal{H}$ to avoid pathological examples, see Lemma~1 in \citep{hoffmann1992distance} and Definition~2 in \citep{kundu2018convex}.
 
\Prob captures unconstrained minimization problems when $g = h= 0$. Therefore, the convergence rate in \Cref{thm:convergence-Lipschitz} is optimal in the sense that it matches the information theoretical lower bounds for first-order black-box methods, see~Section~3.2.1 in \citep{nesterov2003introductory}.
Remark that the subgradient method can achieve a $\mathcal{O}(1/t)$ rate when $f$ is strongly convex. We leave the analysis of TOS for strongly convex nonsmooth $f$ as an open problem. 

\section{TOS for Stochastic Setting}
\label{sec:algorithm-stochastic}

In this section, we focus on the three-composite stochastic optimization template:
\begin{equation} \label{eqn:model-problem-stochastic}
\min_{x \in \R^n} \quad \phi(x) := f(x) + g(x) + h(x) \quad \text{where} \quad f(x) := \mathbb{E}_{\xi} \tilde{f}(x,\xi)
\end{equation}
and $\xi$ is a random variable. 
The following theorem characterizes the convergence rate of \Cref{alg:three-operator-splitting} for \Probsto. 

\begin{theorem}\label{thm:stochastic}
Consider \Probsto and employ TOS (\Cref{alg:three-operator-splitting}) with a fixed step-size $\gamma_t = \gamma = \gamma_0 / \sqrt{T+1}$ for some $\gamma_0 > 0$. Suppose we are receiving the update directions $u_t$ from an unbiased stochastic first-order oracle with bounded variance, i.e.,
\begin{align}
    \hat{u}_t := \mathbb{E}[u_t | z_t] \in \partial f(z_t) \quad \text{and} \quad \mathbb{E}[\norm{u_t - \hat{u}_t}^2] \leq \sigma^2  ~~ \text{for some $\sigma < +\infty$.}
\end{align}
Assume that $\norm{\hat{u}_t} \leq G_f$ for all $t$. Then, the following guarantees hold:
\begin{gather}
\mathbb{E} [f(\bar{z}_T) + g(\bar{z}_T) + h (\bar{x}_T)] - \phi_\star 
 \leq \frac{1}{2\sqrt{T+1}} \left(\frac{ D^2}{\gamma_0} + \gamma_0 (\sigma^2 + G_f^2)  \right) ~~~ \text{and} \label{eqn:thm-stochastic-subgradient-a} \\
 \mathbb{E}[\norm{\bar{x}_T - \bar{z}_T}]  \leq \frac{2}{T+1} \left( D + \gamma_0 \left(G_f + \frac{\sigma}{2}\right)\right), ~~ \text{where} ~~ D = \max\{\norm{y_0 - x_\star}, \norm{y_0 - y_\star}\}.
\label{eqn:thm-stochastic-subgradient-b}
\end{gather}
\end{theorem}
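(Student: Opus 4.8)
The strategy is to imitate the proof of \Cref{thm:convergence-Lipschitz} almost verbatim, with two surgical insertions to absorb the stochasticity, and to handle \eqref{eqn:thm-stochastic-subgradient-a} and \eqref{eqn:thm-stochastic-subgradient-b} as two separate computations. Throughout I would let $\mathcal{F}_t$ denote the $\sigma$-field generated by the randomness revealed before drawing the sample used to form $u_t$, so that $y_t$, $z_t$ and $x_\star$ are $\mathcal{F}_t$-measurable, $\mathbb{E}[u_t\mid\mathcal{F}_t]=\hat{u}_t\in\partial f(z_t)$, and $\mathbb{E}[\norm{u_t-\hat{u}_t}^2\mid\mathcal{F}_t]\le\sigma^2$.

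For \eqref{eqn:thm-stochastic-subgradient-a} I would start from the optimality conditions of the two proximal steps that define $z_t$ and $x_t$, add the subgradient inequality for $f$ \emph{at $z_t$ using the genuine subgradient $\hat{u}_t$} (not $u_t$, which is not a subgradient of $f$), and run exactly the same algebraic rearrangement as in \Cref{thm:convergence-Lipschitz} together with $y_{t+1}=y_t-z_t+x_t$, to obtain, for $\gamma=\gamma_t$,
\[
f(z_t)+g(z_t)+h(x_t)-\phi_\star\;\le\;\ip{\hat{u}_t}{z_t-x_\star}-\ip{u_t}{x_t-x_\star}+\tfrac{1}{2\gamma}\bigl(\norm{y_t-x_\star}^2-\norm{y_{t+1}-x_\star}^2\bigr)-\tfrac{1}{2\gamma}\norm{z_t-x_t}^2.
\]
The key observation is that the noise should be paired with the \emph{predictable} ($\mathcal{F}_t$-measurable) vector $z_t-x_\star$ rather than with $x_t-x_\star$, which depends on $u_t$: writing $\ip{\hat{u}_t}{z_t-x_\star}=\ip{u_t}{z_t-x_\star}+\ip{\hat{u}_t-u_t}{z_t-x_\star}$ collapses the first two inner products into $\ip{u_t}{z_t-x_t}+\ip{\hat{u}_t-u_t}{z_t-x_\star}$. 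The term $\ip{u_t}{z_t-x_t}$ is bounded by $\tfrac{\gamma}{2}\norm{u_t}^2+\tfrac{1}{2\gamma}\norm{z_t-x_t}^2$ exactly as in the deterministic proof, which cancels the last term; and the martingale-difference term satisfies $\mathbb{E}[\ip{\hat{u}_t-u_t}{z_t-x_\star}\mid\mathcal{F}_t]=0$. Taking expectations and using $\mathbb{E}\norm{u_t}^2=\norm{\hat{u}_t}^2+\mathbb{E}\norm{u_t-\hat{u}_t}^2\le G_f^2+\sigma^2$ gives a per-iteration bound of the same shape as in \Cref{thm:convergence-Lipschitz} with $G_f^2$ replaced by $G_f^2+\sigma^2$; averaging over $t=0,\dots,T$, telescoping, applying Jensen's inequality to move from $z_t,x_t$ to $\bar{z}_T,\bar{x}_T$, and inserting $\gamma=\gamma_0/\sqrt{T+1}$ and $\norm{y_0-x_\star}\le D$ then yields \eqref{eqn:thm-stochastic-subgradient-a}.

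For \eqref{eqn:thm-stochastic-subgradient-b}, as in the deterministic proof I would use $\norm{\bar{x}_T-\bar{z}_T}=\tfrac{1}{T+1}\norm{y_{T+1}-y_0}\le\tfrac{1}{T+1}\bigl(\norm{y_{T+1}-y_\star}+\norm{y_0-y_\star}\bigr)$, so it remains to control $\mathbb{E}\norm{y_{T+1}-y_\star}$, i.e. to establish a stochastic counterpart of \Cref{thm:nonsmooth-y-bounded}. Here I would revisit the firm-nonexpansiveness argument underlying that result: using only firm nonexpansiveness of $\prox_{\gamma g}$ and $\prox_{\gamma h}$ (no smoothness) one gets the dimension-free one-step bound $\norm{y_{t+1}-y_\star}^2\le\norm{y_t-y_\star}^2-\norm{y_{t+1}-y_t}^2-2\gamma\ip{u_t-u_\star}{x_t-x_\star}$, where $u_\star\in\partial f(x_\star)$ is the fixed-point subgradient from \Cref{lem:fixed-point-characterization} (and $\norm{u_\star}\le G_f$); splitting $x_t-x_\star=(x_t-z_t)+(z_t-x_\star)$, absorbing the first piece into $-\norm{y_{t+1}-y_t}^2$ via Young's inequality (noting $y_{t+1}-y_t=x_t-z_t$), and conditioning on $\mathcal{F}_t$ turns the surviving cross term into $-2\gamma\ip{\hat{u}_t-u_\star}{z_t-x_\star}$, which is $\le0$ by convexity of $f$ at $z_t$ and at $x_\star$. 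What is left is $\mathbb{E}[\norm{y_{t+1}-y_\star}^2\mid\mathcal{F}_t]\le\norm{y_t-y_\star}^2+\gamma^2(\sigma^2+4G_f^2)$; telescoping with $\gamma_t\equiv\gamma_0/\sqrt{T+1}$ and taking square roots gives $\mathbb{E}\norm{y_{T+1}-y_\star}\le\norm{y_0-y_\star}+\gamma_0(\sigma+2G_f)$, which combined with the displayed inequality and $\norm{y_0-y_\star}\le D$ produces \eqref{eqn:thm-stochastic-subgradient-b}.

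The main obstacle, just as in the deterministic nonsmooth analysis, is the boundedness of $\norm{y_t-y_\star}$: once $u_t$ is a subgradient rather than a gradient, the operator $\texttt{TOS}_\gamma$ in \eqref{eqn:TOS-operator} is no longer averaged and $\norm{y_t-y_\star}$ is not Fej\'er monotone, so the standard averagedness machinery is unavailable and one has to run the quadratic recursion above by hand. The stochastic layer adds the second difficulty that the noise $u_t-\hat{u}_t$ is correlated with $x_t$; this is defused by always pairing the noise with a predictable vector ($z_t-x_\star$) before conditioning, and by noting that the $\ell_2$-type telescoping $\sum_{t=0}^{T}\gamma^2=\gamma_0^2$ keeps the variance contribution at the announced $\mathcal{O}(\gamma_0)$ scale rather than $\mathcal{O}(\gamma_0\sqrt{T+1})$.
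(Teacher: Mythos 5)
Your proposal is correct and follows essentially the same route as the paper: the objective bound is obtained exactly as in the paper's proof by pairing the noise with the predictable vector $z_t-x_\star$ so the martingale term vanishes and $\mathbb{E}\norm{u_t}^2\le G_f^2+\sigma^2$ replaces $G_f^2$, and the feasibility bound rests on the same stochastic analogue of \Cref{thm:nonsmooth-y-bounded} (the paper's Theorem~\ref{thm:stochastic-y-bounded}), proved via firm nonexpansiveness, monotonicity of $\partial f$ to discard $\ip{\hat{u}_t-u_\star}{z_t-x_\star}$, and Young's inequality against $-\norm{y_{t+1}-y_t}^2$. Even your constants ($\gamma_0(\sigma+2G_f)$ for $\mathbb{E}\norm{y_{T+1}-y_\star}$) match the paper's.
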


 \vspace{0.25em}
\begin{remark}\label{rem:smooth-stochastic}
Similar rate guarantees hold with some restrictions on the choice of $\gamma_0$ if we replace bounded subgradients assumption with the smoothness of $f$. We defer details to the supplements. 
\end{remark}

If we can estimate $D, G_f$ and $\sigma$, then we can optimize the bounds by choosing $\gamma_0 \approx D/\max\{G_f, \sigma\}$. This gives $\smash{f(\bar{z}_T) + g(\bar{z}_T) + h(\bar{x}_T) - \phi_\star \leq \mathcal{O}(D \max\{G_f,\sigma\} / \sqrt{T})}$ and $\smash{\norm{\bar{x}_T - \bar{z}_T} \leq \mathcal{O}(D/T)}$. 

Analogous to \Cref{cor:tos-nonsmooth}, from \Cref{thm:stochastic} we can derive convergence guarantees when $h$ is Lipschitz continuous or an indicator function. As in the nonsmooth setting, the rates shown in this section are optimal because \Probsto covers $g(x) = h(x) = 0$  as a special case. 

\section{TOS with Adaptive Learning Rates}
\label{sec:adaptos}

In this section, we focus on an important subclass of \Prob where $g$ and $h$ are indicator functions of some closed and convex sets:
\begin{equation}
\label{eqn:model-problem-adaptos}
\min_{x \in \R^n} \quad f(x) \quad \text{subject to} \quad x\in \mathcal{G} \cap \mathcal{H}. 
\end{equation}
TOS is effective for \ProbAdapTos when projections onto $\cal G$ and $\cal H$ are easy but the projection onto their intersection is challenging. Particular examples include transportation polytopes, doubly nonnegative matrices, and isotonic regression, among many others. 

We propose \algo with an adaptive step-size in the spirit of adaptive online learning algorithms and online to batch conversion techniques, see \citep{duchi2011adaptive, rakhlin2013optimization, levy2017online, levy2018online, cutkosky2019anytime, kavis2019unixgrad,bach2019universal} and the references therein. 
\algo employs the following step-size rule:
\begin{align} \label{eqn:adaptos-step-size}
    \gamma_t = \frac{\alpha}{\sqrt{\beta + \sum_{\tau=0}^{t-1} \norm{u_\tau}^2}} \quad \text{for some $\alpha, \beta > 0$.}
\end{align}
$\beta$ in the denominator prevents $\gamma_t$ to become undefined. If $D := \norm{y_0 - x_\star}$ and $G_f$ are known, theory suggests choosing $\alpha = D$ and $\beta = G_f^2$ for a tight upper bound, however, this choice affects only the constants and not the rate of convergence as we demonstrate in the rest of this section. Importantly, we do not assume any prior knowledge on $D$ or $G_f$. In practice, we often discard $\beta$ and use $\gamma_0 = \alpha$ at the first iteration. 

For \algo, in addition to \eqref{eqn:ergodic-sequence}, we will also use a second ergodic sequence with weighted averaging:
\begin{align}
        \tilde{x}_t = \frac{1}{\sum_{\tau = 0}^t \gamma_\tau} \sum_{\tau = 0}^t \gamma_\tau x_\tau
    \qquad \text{and} \qquad
    \tilde{z}_t = \frac{1}{\sum_{\tau = 0}^t \gamma_\tau} \sum_{\tau = 0}^t \gamma_\tau z_\tau. 
    \label{eqn:ergodic-sequence-2}
\end{align}
This sequence was also considered for TOS with line-search in \citep{pedregosa2018adaptive}.

\begin{theorem}\label{thm:adaptos-nonsmooth}
Consider \ProbAdapTos and TOS (\Cref{alg:three-operator-splitting}) with the update directions $u_t \in \partial f(z_t)$ and the adaptive step-size \eqref{eqn:adaptos-step-size}. Assume that $\norm{u_t} \leq G_f$ for all $t$. Then, the estimates generated by TOS satisfy 
\begin{gather}
f(\tilde{z}_t) - f_\star 
\leq \tilde{\mathcal{O}} \left( \frac{2\alpha G_f}{\sqrt{t+1}} \Big(\tfrac{D^2}{4\alpha^2} + 1 + \tfrac{G_f}{\sqrt{\beta}} \Big) \right) \quad \text{and} \\[0.5em]
\quad \dist(\bar{z}_t,\mathcal{H}) 
\leq \tilde{\mathcal{O}}\bigg(\frac{2\alpha}{\sqrt{t+1}}\Big(1 + \tfrac{G_f}{\sqrt{\beta}}\Big)\bigg) ~~~ \text{where} ~~~ D = \norm{y_0 - x_\star}. 
\end{gather}
\end{theorem}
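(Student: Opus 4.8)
The plan is to adapt the energy/telescoping argument behind \Cref{thm:convergence-Lipschitz} but with the variable step-size \eqref{eqn:adaptos-step-size} kept inside the sum, and then to control the resulting data-dependent sums by the standard AdaGrad-type bound $\sum_{\tau=0}^{t}\norm{u_\tau}^2/\sqrt{\beta+\sum_{s<\tau}\norm{u_s}^2}\le 2\sqrt{\beta+\sum_{\tau\le t}\norm{u_\tau}^2}$, which is where the $\tilde{\mathcal{O}}$ and the $1/\sqrt{t}$ originate (with a logarithmic factor hidden, hence the tilde). First I would revisit the per-iteration inequality used in the proof sketch of \Cref{thm:convergence-Lipschitz}: for any fixed $\gamma>0$,
\begin{align}
f(z_t)+g(z_t)+h(x_t)-\phi_\star \leq \frac{1}{2\gamma}\norm{y_t-x_\star}^2-\frac{1}{2\gamma}\norm{y_{t+1}-x_\star}^2+\frac{\gamma}{2}\norm{u_t}^2. \nonumber
\end{align}
Here $g=\iota_{\mathcal G}$ and $h=\iota_{\mathcal H}$, so $g(z_t)=0$ (since $z_t=\proj_{\mathcal G}(y_t)\in\mathcal G$ and $x_\star\in\mathcal G$), and the left side reduces to $f(z_t)-f_\star$ whenever $x_t\in\mathcal H$, which holds because $x_t=\proj_{\mathcal H}(\cdot)$. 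With a time-varying $\gamma_t$, I would instead multiply the optimality conditions by $\gamma_t$ before combining, producing a telescoping of the form $\frac12\norm{y_t-x_\star}^2-\frac12\norm{y_{t+1}-x_\star}^2$ plus the error term $\tfrac{\gamma_t^2}{2}\norm{u_t}^2$; summing over $\tau=0,\dots,t$ then gives $\sum_{\tau}\gamma_\tau\big(f(z_\tau)-f_\star\big)\le \tfrac12\norm{y_0-x_\star}^2+\tfrac12\sum_{\tau}\gamma_\tau^2\norm{u_\tau}^2$, using that $\norm{y_{t+1}-x_\star}^2\ge 0$ and $D=\norm{y_0-x_\star}$.

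Next I would divide by $\sum_{\tau=0}^t\gamma_\tau$ and invoke Jensen / convexity of $f$ to pass to the weighted ergodic iterate $\tilde z_t$ from \eqref{eqn:ergodic-sequence-2}: $f(\tilde z_t)-f_\star\le \big(\tfrac12 D^2+\tfrac12\sum_\tau\gamma_\tau^2\norm{u_\tau}^2\big)/\sum_\tau\gamma_\tau$. Now the two data-dependent sums must be estimated. For the numerator sum, $\gamma_\tau^2\norm{u_\tau}^2=\alpha^2\norm{u_\tau}^2/(\beta+\sum_{s<\tau}\norm{u_s}^2)$, and the AdaGrad lemma bounds $\sum_\tau$ of this by $\alpha^2\log\!\big(1+\sum_\tau\norm{u_\tau}^2/\beta\big)$ (or the $2\sqrt{\cdot}$ variant), hence $\tilde{\mathcal O}(\alpha^2)$ with a log factor in $G_f^2(t+1)/\beta$. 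For the denominator I would lower bound $\sum_{\tau=0}^t\gamma_\tau\ge (t+1)\gamma_t=(t+1)\alpha/\sqrt{\beta+\sum_{\tau<t}\norm{u_\tau}^2}\ge (t+1)\alpha/\sqrt{\beta+t G_f^2}$, since $\gamma_\tau$ is nonincreasing and $\norm{u_\tau}\le G_f$; this is $\Omega(\alpha\sqrt{t+1}/G_f)$ up to the $\sqrt\beta$ correction. Combining, $f(\tilde z_t)-f_\star\le \tilde{\mathcal O}\!\big(\tfrac{G_f}{\alpha\sqrt{t+1}}\big(\tfrac{D^2}{2}+\alpha^2+\tfrac{\alpha^2 G_f}{\sqrt\beta}\big)\big)$, which matches the stated bound after pulling out $2\alpha G_f/\sqrt{t+1}$.

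For the feasibility bound $\dist(\bar z_t,\mathcal H)$, I would mirror the argument in \Cref{thm:convergence-Lipschitz}: $\bar x_t-\bar z_t=\tfrac{1}{t+1}(y_{t+1}-y_0)$ and $\dist(\bar z_t,\mathcal H)\le\norm{\bar z_t-\bar x_t}$ since $\bar x_t\in\mathcal H$ by convexity of $\mathcal H$; then I need $\norm{y_{t+1}-y_0}=\tilde{\mathcal O}(\alpha\sqrt{t+1}(1+G_f/\sqrt\beta))$, so the division by $t+1$ yields the claimed $\tilde{\mathcal O}(\alpha(1+G_f/\sqrt\beta)/\sqrt{t+1})$. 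The way to get this is to bound $\norm{y_{t+1}-y_\star}$ via the nonsmooth analogue of the Fej\'er-type estimate (cf.\ \Cref{thm:nonsmooth-y-bounded}), which in the adaptive-step-size case should read $\norm{y_{t+1}-y_\star}\le\norm{y_0-y_\star}+2\sum_{\tau\le t}\gamma_\tau\norm{u_\tau}$, and then control $\sum_\tau\gamma_\tau\norm{u_\tau}=\alpha\sum_\tau\norm{u_\tau}/\sqrt{\beta+\sum_{s<\tau}\norm{u_s}^2}\le 2\alpha\sqrt{\beta+\sum_\tau\norm{u_\tau}^2}=\mathcal O(\alpha\sqrt{\beta+tG_f^2})$ by the AdaGrad lemma again.

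The main obstacle I anticipate is re-deriving the boundedness of $\norm{y_{t+1}-y_\star}$ with a \emph{time-varying} step-size: the proof of \Cref{thm:convergence-Lipschitz} relied on $\gamma_t\equiv\gamma$ and on a clean telescoping that used the fixed-point identity $\texttt{TOS}_\gamma(y_\star,u_\star)=y_\star$ for a \emph{single} $\gamma$. With $\gamma_t$ changing every iteration, $y_\star$ is a fixed point of $\texttt{TOS}_{\gamma_t}$ only if $\proj_{\mathcal G}(y_\star)=x_\star$ independently of $\gamma_t$ — which does hold here because $g$ is an indicator, so $\prox_{\gamma g}=\proj_{\mathcal G}$ for every $\gamma$, and similarly for $h$. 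I would exploit exactly this structural feature of \ProbAdapTos to recover a per-step contraction-plus-error inequality $\norm{y_{t+1}-y_\star}\le\norm{y_t-y_\star}+2\gamma_t\norm{u_t}$ (nonexpansiveness of the projections absorbing the rest), and then iterate. Getting the constants in the final display to line up exactly — in particular the $\tfrac{D^2}{4\alpha^2}+1+\tfrac{G_f}{\sqrt\beta}$ grouping and the choice of which AdaGrad variant ($\log$ vs.\ $\sqrt{\cdot}$) produces the $\tilde{\mathcal O}$ — is the remaining bookkeeping, but the structure above should deliver the rates as stated.
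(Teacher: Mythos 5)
Your argument for the objective bound is essentially the paper's proof: multiply the per-iteration inequality by $\gamma_t$ so that $\tfrac12\norm{y_t-x_\star}^2-\tfrac12\norm{y_{t+1}-x_\star}^2$ telescopes, bound $\sum_\tau\gamma_\tau^2\norm{u_\tau}^2$ by the logarithmic AdaGrad lemma, lower-bound $s_t=\sum_\tau\gamma_\tau\ge \alpha(t+1)/\sqrt{\beta+G_f^2t}$, and apply Jensen at $\tilde z_t$. That half is correct.

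The feasibility half has a genuine gap, and it sits exactly at the spot you flagged as the ``main obstacle.'' Your proposed resolution --- that because $g,h$ are indicators, a single $y_\star$ is a fixed point of $\texttt{TOS}_{\gamma_t}$ for every $\gamma_t$ --- is false in general. The fixed-point characterization requires \emph{both} $\proj_{\mathcal G}(y_\star)=x_\star$ (which indeed holds for all $\gamma$) \emph{and} $x_\star=\proj_{\mathcal H}(2x_\star-y_\star-\gamma u_\star)$, i.e.\ $-(y_\star-x_\star)-\gamma u_\star\in N_{\mathcal H}(x_\star)$. Since $y_\star-x_\star$ is a fixed vector while $\gamma u_\star$ rescales with $\gamma$, the fixed-point set of $\texttt{TOS}_\gamma$ genuinely depends on $\gamma$ (it is of the form $x_\star+\gamma v$ with $v\in N_{\mathcal G}(x_\star)$), so there is no common anchor $y_\star$ against which to chain the Fej\'er-type recursion across iterations with varying $\gamma_t$. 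A second, independent problem is that even with a valid anchor the recursion produces $\sum_\tau\gamma_\tau\norm{u_\tau-u_\star}$, not $2\sum_\tau\gamma_\tau\norm{u_\tau}$; the piece $\norm{u_\star}\sum_\tau\gamma_\tau$ is \emph{not} self-normalized by the adaptive step-size (if the iterates' subgradients shrink, $\gamma_\tau$ stays of order $\alpha/\sqrt\beta$ and this sum grows linearly in $t$), so the AdaGrad lemma does not control it. The paper's fix avoids both issues at once: anchor at $x_\star\in\mathcal G\cap\mathcal H$ itself. Since $x_\star=\proj_{\mathcal G}(x_\star)=\proj_{\mathcal H}(x_\star)$ with no reference to $\gamma$ or to any $u_\star$, firm non-expansivity applied to the pairs $(y_t,x_\star)$ and $(2z_t-y_t-\gamma_tu_t,\,x_\star)$ yields
\begin{align*}
\norm{y_{t+1}-x_\star}^2\le\big(\norm{y_t-x_\star}+\gamma_t\norm{u_t}\big)^2 ,
\end{align*}
where only $\norm{u_t}$ appears; then $\sum_\tau\gamma_\tau\norm{u_\tau}=\tilde{\mathcal O}\big(\alpha\sqrt{t+1}\,(1+G_f/\sqrt\beta)\big)$ by Cauchy--Schwarz plus the logarithmic lemma, and dividing $\norm{y_{t+1}-y_0}$ by $t+1$ gives the stated $\tilde{\mathcal O}(1/\sqrt{t+1})$ infeasibility. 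The same inequality is also what the paper uses (before invoking convexity of $f$) to produce the telescoping you need in the objective part, so the whole proof runs off the single anchor $x_\star$ and never touches $y_\star$.
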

If $D$ and $G_f$ are known, we can choose $\alpha = D$ and $\smash{\beta = G_f^2}$.
This gives $\smash{f(\tilde{z}_t) - f_\star \leq \tilde{\mathcal{O}}(G_f D / \sqrt{t})}$ and $\smash{\dist(\bar{z}_t,\mathcal{H}) \leq \tilde{\mathcal{O}}(D/\sqrt{t})}$. 

The next theorem establishes a faster rate for the same algorithm when $f$ is smooth and a solution lies in the interior of the feasible set. 

\begin{theorem}\label{thm:adaptos-smooth}
Consider \ProbAdapTos and suppose $f$ is $L_f$-smooth on $\mathcal{G}$. Use TOS (\Cref{alg:three-operator-splitting}) with the update directions $u_t = \nabla f(z_t)$ and the adaptive step-size \eqref{eqn:adaptos-step-size}. Assume that $\norm{u_t} \leq G_f$ for all $t$. Suppose \ProbAdapTos has a solution in the interior of the feasible set. Then, the estimates generated by TOS satisfy
\begin{gather}
f(\bar{z}_t) - f_\star 
 \leq \tilde{\mathcal{O}} \left( \frac{2}{t+1} \Big( 4 \alpha^2 L_f \big(\tfrac{D^2}{4\alpha^2} + 1 + \tfrac{G_f^2}{\beta} \big)^2  + \alpha \sqrt{\beta} \big(\tfrac{D^2}{4\alpha^2} + 1 + \tfrac{G_f^2}{\beta} \big) \Big)  \right) \quad \text{and} \\[0.5em]
 \dist(\bar{z}_t,\mathcal{H}) 
 \leq \tilde{\mathcal{O}} \bigg( \frac{2\alpha}{t+1} \Big(\tfrac{D}{\alpha} + 1 + \tfrac{G_f}{\sqrt{\beta}} \Big) \bigg) ~~~ \text{where} ~~~ D = \norm{y_0 - x_\star}. 
\end{gather}
\end{theorem}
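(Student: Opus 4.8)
The plan is to refine the proof of \Cref{thm:adaptos-nonsmooth}: the extra leverage comes from smoothness of $f$ together with the interior-solution hypothesis, which force the adaptive step-sizes to stay bounded away from zero and thereby upgrade the $\tilde{\mathcal{O}}(1/\sqrt{t})$ guarantee to $\tilde{\mathcal{O}}(1/t)$. First I would record the single-iteration estimate. Since $g$ and $h$ are the indicators of $\mathcal{G}$ and $\mathcal{H}$, the iterates satisfy $z_t=\proj_{\mathcal{G}}(y_t)\in\mathcal{G}$ and $x_t=\proj_{\mathcal{H}}(2z_t-y_t-\gamma_t u_t)\in\mathcal{H}$, so $g(z_t)=h(x_t)=0$ and $\phi_\star=f_\star$. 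Writing the optimality conditions of the two projections and using convexity of $f$ exactly as in the proof of \Cref{thm:convergence-Lipschitz} (detailed in the supplement), but keeping the step-size inside the iteration, gives at the interior optimum $x_\star$ the bound $f(z_t)-f_\star\le\tfrac{1}{2\gamma_t}\norm{y_t-x_\star}^2-\tfrac{1}{2\gamma_t}\norm{y_{t+1}-x_\star}^2+\tfrac{\gamma_t}{2}\norm{u_t}^2$. The key maneuver for adaptive rates is to multiply this through by $\gamma_t$, so that the coefficients of $\norm{y_t-x_\star}^2$ become constant and summation telescopes; summing over $t=0,\dots,T$ yields $\sum_{t}\gamma_t(f(z_t)-f_\star)+\tfrac12\norm{y_{T+1}-x_\star}^2\le C_T:=\tfrac{D^2}{2}+\tfrac12\sum_{t}\gamma_t^2\norm{u_t}^2$, from which I read off both $\sum_t\gamma_t(f(z_t)-f_\star)\le C_T$ and $\norm{y_{T+1}-x_\star}^2\le 2C_T$ (the latter using $f_\star=\min_x f(x)$, established below).

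Next I would control the adaptive sum. With $b_t:=\beta+\sum_{\tau<t}\norm{u_\tau}^2$ one has $\gamma_t^2\norm{u_t}^2=\alpha^2(b_{t+1}-b_t)/b_t$, and since $\norm{u_t}^2\le G_f^2$ the standard AdaGrad telescoping bound gives $\sum_t\gamma_t^2\norm{u_t}^2\le\alpha^2(1+G_f^2/\beta)\log(b_{T+1}/\beta)$, hence $C_T\le\tfrac{D^2}{2}+\tfrac12\alpha^2(1+G_f^2/\beta)\log(b_{T+1}/\beta)$. The decisive step --- and the one I expect to be the main obstacle --- is to show that $b_{T+1}$ stays bounded independently of $T$, and this is exactly where the smooth/interior hypothesis is used. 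Since $x_\star$ lies in the interior of $\mathcal{G}\cap\mathcal{H}$ and is optimal for \ProbAdapTos, its normal cone vanishes, so $\nabla f(x_\star)=0$ and $f_\star=\min_x f(x)$; smoothness then gives $\norm{u_t}^2=\norm{\nabla f(z_t)}^2\le 2L_f\,(f(z_t)-f_\star)$ for every $t$. Combining this with $\gamma_t\ge\alpha/\sqrt{b_{T+1}}$ and the telescoped bound, $b_{T+1}-\beta=\sum_t\norm{\nabla f(z_t)}^2\le 2L_f\sum_t(f(z_t)-f_\star)\le(2L_f\sqrt{b_{T+1}}/\alpha)\,C_T$, a quadratic inequality in $\sqrt{b_{T+1}}$. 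Solving it --- and resolving the mild circularity through $C_T$, which only costs a $\log$ factor that $\tilde{\mathcal{O}}$ absorbs --- produces a $T$-independent estimate $\sqrt{b_{T+1}}\le\tilde{\mathcal{O}}(L_fC_T/\alpha+\sqrt{\beta})$ with $C_T=\tilde{\mathcal{O}}(\alpha^2\kappa)$, where $\kappa:=\tfrac{D^2}{4\alpha^2}+1+\tfrac{G_f^2}{\beta}$.

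Finally I would assemble the rates. Boundedness of $b_{T+1}$ means $\gamma_t\ge\alpha/\sqrt{b_{T+1}}$ does not vanish, so $\sum_t(f(z_t)-f_\star)\le(\sqrt{b_{T+1}}/\alpha)C_T$; convexity then gives $f(\bar z_T)-f_\star\le\tfrac{1}{T+1}\sum_t(f(z_t)-f_\star)\le\sqrt{b_{T+1}}\,C_T/(\alpha(T+1))$, and substituting the bounds of the previous paragraph produces the stated $\tilde{\mathcal{O}}\big(\tfrac{2}{t+1}(4\alpha^2L_f\kappa^2+\alpha\sqrt{\beta}\,\kappa)\big)$. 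For feasibility, $\dist(\bar z_T,\mathcal{H})\le\norm{\bar z_T-\bar x_T}=\tfrac{1}{T+1}\norm{y_{T+1}-y_0}\le\tfrac{1}{T+1}(\norm{y_{T+1}-x_\star}+D)\le(\sqrt{2C_T}+D)/(T+1)$ by the first paragraph, and inserting $C_T=\tilde{\mathcal{O}}(\alpha^2\kappa)$ together with $\sqrt{\kappa}\le\tfrac{D}{2\alpha}+1+\tfrac{G_f}{\sqrt{\beta}}$ gives $\tilde{\mathcal{O}}\big(\tfrac{2\alpha}{t+1}(\tfrac{D}{\alpha}+1+\tfrac{G_f}{\sqrt{\beta}})\big)$.

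The main obstacle is the boundedness of $b_{T+1}$ in the middle step: the interior assumption is indispensable there, because without it $f_\star$ need not be the unconstrained minimum, the inequality $\norm{\nabla f(z_t)}^2\le 2L_f(f(z_t)-f_\star)$ fails, $b_{T+1}$ may grow linearly in $T$, and only the $\tilde{\mathcal{O}}(1/\sqrt{t})$ rate of \Cref{thm:adaptos-nonsmooth} remains. A secondary technicality is that this gradient--suboptimality inequality formally requires $f$ to be smooth on a set containing the segments $[z_t,x_\star]$ and the auxiliary points $z_t-L_f^{-1}\nabla f(z_t)$; since the iterates remain bounded (first paragraph) and drift toward the interior point $x_\star$, I would simply remark that this is harmless and proceed.
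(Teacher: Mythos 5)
Your proposal is correct and arrives at the stated bounds; it shares the paper's skeleton but executes the central summation step differently. Both arguments rest on the same ingredients: the per-iteration projection inequality, the interior-solution hypothesis (which forces $\nabla f(x_\star)=0$, hence $\ip{u_t}{x_\star-z_t}\le 0$ and $f_\star=\min_{x}f(x)$), the self-bounding gradient inequality $\norm{\nabla f(z_t)}^2\le 2L_f(f(z_t)-f_\star)$ of \Cref{lem:Levy2017}, the logarithmic bound on $\sum_\tau\gamma_\tau^2\norm{u_\tau}^2$ from \Cref{cor:adaptive-bounds}, and a terminal quadratic inequality. Where you diverge is in the bookkeeping: you multiply the one-step bound by $\gamma_t$ so that it telescopes immediately, pass from the $\gamma$-weighted sum to the unweighted one via $\gamma_t\ge\alpha/\sqrt{b_{T+1}}$ (valid only because every summand is nonnegative under the interior assumption), and close a quadratic in $\sqrt{b_{T+1}}$, thereby explicitly proving that the cumulative squared gradient norm is $\tilde{\mathcal{O}}(1)$. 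The paper instead keeps the $1/(2\gamma_t)$ coefficients, performs a summation by parts on $\sum_\tau(\gamma_\tau^{-1}-\gamma_{\tau-1}^{-1})\norm{y_\tau-x_\star}^2$ using the a priori bound $\norm{y_\tau-x_\star}^2\le D_\tau^2$, obtains $\Phi_t\le D_t^2/(2\gamma_t(t+1))$, and closes a quadratic in the average suboptimality $\Phi_t$ itself, never isolating a $T$-independent bound on $\sum_\tau\norm{u_\tau}^2$. The two quadratics are equivalent up to the substitution $\gamma_t^{-1}=\alpha^{-1}\sqrt{b_t}$ (note $D_t^2=2C_t$ in your notation) and give the same constants up to absolute factors; your route makes the telescoping trivial and yields the interpretable intermediate fact that the gradients are square-summable up to logarithms, while the paper's route avoids invoking nonnegativity of the individual suboptimalities at that particular step. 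Your closing caveat about the domain on which \Cref{lem:Levy2017} needs smoothness is a technicality the paper also passes over silently, so it does not constitute a gap relative to the published argument.
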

If $D$ and $G_f$ are known, we can choose $\alpha = D$ and $\smash{\beta = G_f^2}$. \\
This gives $\smash{f(\bar{z}_t) - f_\star \leq \tilde{\mathcal{O}}((L_fD^2+G_fD)/t)}$ and $\smash{\dist(\bar{z}_t,\mathcal{H}) \leq \tilde{\mathcal{O}}(D/t)}$. 

\begin{remark}
When $f$ is smooth, the boundedness assumption $\norm{u_t}\leq G_f$ holds automatically with $G_f \leq L_f D_{\mathcal{G}}$ if $\mathcal{G}$ has a bounded diameter $D_{\mathcal{G}}$.
\end{remark}

We believe the assumption on the location of the solution is a limitation of the analysis and that the method can achieve fast rates when $f$ is smooth regardless of where the solution lies. Remark that this assumption also appears in  \citep{levy2017online,levy2018online}.

Following the definition in \citep{nesterov2015universal}, we say that an algorithm is universal if it does not require to know whether the objective is smooth or not yet it implicitly adapts to the smoothness of the objective. \algo attains universal convergence rates for \ProbAdapTos. 
It converges to a solution with $\smash{\tilde{\mathcal{O}}(1/\sqrt{t})}$ rate (in function value) when $f$ is nonsmooth. The rate becomes $\smash{\tilde{\mathcal{O}}(1/t)}$ if $f$ is smooth and the solution is in the interior of the feasible set. 

Finally, the next theorem shows that \algo can successfully handle stochastic (sub)gradients. 

\begin{theorem}\label{thm:adaptos-stochastic}
Consider \ProbAdapTos. Use TOS (\Cref{alg:three-operator-splitting}) with the update directions $u_t$ from an unbiased stochastic subgradient oracle such that $\mathbb{E}[u_t|z_t] \in \partial f(z_t)$ almost surely. Assume that $\norm{u_t} \leq G_f$ for all $t$. Suppose \ProbAdapTos has a solution in the interior of the feasible set. Then, the estimates generated by TOS satisfy 
\begin{gather}
\mathbb{E}\big[f(\tilde{z}_t) - f_\star\big]
\leq \tilde{\mathcal{O}} \bigg( \frac{2 \alpha G_f}{\sqrt{t+1}} \Big(\tfrac{D^2}{4\alpha^2} + 1 + \tfrac{G_f^2}{\beta}\Big) \bigg) \quad \text{and} \\[0.5em]
\quad \mathbb{E}[\dist(\bar{z}_t,\mathcal{H})] 
\leq \tilde{\mathcal{O}}\bigg(\frac{2\alpha}{t+1}\Big(\tfrac{D}{\alpha} + 1 + \tfrac{G_f}{\sqrt{\beta}}\Big)\bigg) ~~~ \text{where} ~~~ D = \norm{y_0 - x_\star}. 
\end{gather}
\end{theorem}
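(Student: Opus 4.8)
The plan is to reuse the per-iteration inequality behind \Cref{thm:convergence-Lipschitz}, combine it with the adaptive-step-size bookkeeping of \Cref{thm:adaptos-nonsmooth}, and control the stochastic error by a martingale argument that the interior-point hypothesis (the same one that powers \Cref{thm:adaptos-smooth}) makes viable. First I would record the basic one-step bound. Writing the optimality conditions of the two proximal steps of \Cref{alg:three-operator-splitting}, using that $g,h$ are indicators (so $g(z_t)=g(x_\star)=h(x_t)=h(x_\star)=0$ and $\phi_\star=f_\star$), and invoking the subgradient inequality at $z_t$ with $\hat u_t=\mathbb{E}[u_t\mid z_t]\in\partial f(z_t)$ split as $f(z_t)-f_\star\le\ip{u_t}{z_t-x_\star}+\ip{\hat u_t-u_t}{z_t-x_\star}$, I obtain, for every $t$,
\begin{equation*}
\norm{y_{t+1}-x_\star}^2 \le \norm{y_t-x_\star}^2 - 2\gamma_t\bigl(f(z_t)-f_\star\bigr) + \gamma_t^2\norm{u_t}^2 + 2\gamma_t\ip{\hat u_t-u_t}{z_t-x_\star}.
\end{equation*}

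I would then rely on two \emph{deterministic} facts about the rule \eqref{eqn:adaptos-step-size} under $\norm{u_t}\le G_f$: (a) the AdaGrad-type telescoping estimate $\sum_{t=0}^T\gamma_t^2\norm{u_t}^2=\alpha^2\sum_{t=0}^T\frac{\norm{u_t}^2}{\beta+\sum_{\tau<t}\norm{u_\tau}^2}\le\tilde{\mathcal{O}}\bigl(\alpha^2(1+G_f^2/\beta)\bigr)$; and (b) the lower bound $\sum_{t=0}^T\gamma_t\ge\alpha\sum_{t=0}^T(\beta+tG_f^2)^{-1/2}=\Omega\bigl(\alpha\sqrt{t+1}/G_f\bigr)$, which is non-random precisely because the theorem assumes the \emph{realized} directions, not merely their conditional means, are bounded — this is where that strengthening is used.

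Next I would exploit the interior-point hypothesis: if a minimizer $x_\star$ of $f$ over $\mathcal{G}\cap\mathcal{H}$ lies in the interior, then $0\in\partial f(x_\star)$, so $x_\star$ is an \emph{unconstrained} minimizer and $f(z)\ge f_\star$ for all $z$; in particular $f(z_\tau)-f_\star\ge0$ and $f(\tilde{z}_t)-f_\star\ge0$. The cross terms $\gamma_\tau\ip{\hat u_\tau-u_\tau}{z_\tau-x_\star}$ form a martingale-difference sequence, since $z_\tau$ (hence $\gamma_\tau$) is determined by $u_0,\dots,u_{\tau-1}$ while $\mathbb{E}[u_\tau\mid z_\tau]=\hat u_\tau$, so their expectations vanish. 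Summing the one-step bound, telescoping, discarding the nonpositive $-2\gamma_\tau(f(z_\tau)-f_\star)$ terms, and taking expectations then gives $\mathbb{E}\norm{y_{t+1}-x_\star}^2\le D^2+\tilde{\mathcal{O}}(\alpha^2(1+G_f^2/\beta))$. Since $y_{\tau+1}-y_\tau=x_\tau-z_\tau$ we have $\bar{x}_t-\bar{z}_t=\frac{1}{t+1}(y_{t+1}-y_0)$, and $\bar{x}_t\in\mathcal{H}$, so
\begin{equation*}
\mathbb{E}[\dist(\bar{z}_t,\mathcal{H})] \le \frac{\mathbb{E}\norm{y_{t+1}-x_\star}+D}{t+1} \le \tilde{\mathcal{O}}\!\biggl(\frac{2\alpha}{t+1}\Bigl(\tfrac{D}{\alpha}+1+\tfrac{G_f}{\sqrt{\beta}}\Bigr)\biggr),
\end{equation*}
the claimed $\tilde{\mathcal{O}}(1/t)$ feasibility rate; the improvement over \Cref{thm:adaptos-nonsmooth} is exactly that $\norm{y_t-x_\star}$ now stays bounded in expectation.

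For the objective I would instead keep the $-2\gamma_\tau(f(z_\tau)-f_\star)$ terms. Summing over $\tau=0,\dots,t$ and using fact (a), with $B:=D^2+\tilde{\mathcal{O}}(\alpha^2(1+G_f^2/\beta))$ deterministic and $M_t:=\sum_{\tau=0}^t\gamma_\tau\ip{\hat u_\tau-u_\tau}{z_\tau-x_\star}$, I get $2\sum_{\tau=0}^t\gamma_\tau(f(z_\tau)-f_\star)\le B+2M_t$, hence by convexity of $f$ along the weighted average \eqref{eqn:ergodic-sequence-2},
\begin{equation*}
0 \le f(\tilde{z}_t)-f_\star \le \frac{B+2M_t}{2\sum_{\tau=0}^t\gamma_\tau}.
\end{equation*}
The main obstacle is that $\sum_\tau\gamma_\tau$ is data-dependent and correlated with the noise $M_t$, so one cannot move the expectation inside the quotient. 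I would sidestep it as follows: the left inequality forces $B+2M_t\ge0$ almost surely, so the quotient may be bounded by replacing $\sum_\tau\gamma_\tau$ with the deterministic lower bound from fact (b); taking expectations then annihilates $M_t$ and leaves $\mathbb{E}[f(\tilde{z}_t)-f_\star]\le \frac{B}{2\,\Omega(\alpha\sqrt{t+1}/G_f)}=\tilde{\mathcal{O}}\!\bigl(\tfrac{2\alpha G_f}{\sqrt{t+1}}(\tfrac{D^2}{4\alpha^2}+1+\tfrac{G_f^2}{\beta})\bigr)$. The remainder I expect to be routine: verifying fact (a) (including the $t=0$ term via $\beta>0$), checking the measurability needed for the martingale property, and the constant bookkeeping to match the $\tilde{\mathcal{O}}$ expressions — the suppressed logarithmic factors coming entirely from fact (a).
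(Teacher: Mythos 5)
Your proposal is correct and follows essentially the same route as the paper's proof: the same one-step inequality derived from the two projection steps, the same use of the interior-solution hypothesis (via $0\in\partial f(x_\star)$) to make the feasibility recursion telescope, the same martingale cancellation of $\gamma_\tau\ip{\hat u_\tau - u_\tau}{z_\tau - x_\star}$, and the same deterministic AdaGrad-type bounds on $\sum_\tau \gamma_\tau^2\norm{u_\tau}^2$ and $\sum_\tau\gamma_\tau$. Your device for the data-dependent denominator --- using $f(\tilde z_t)-f_\star\geq 0$ to justify replacing $s_t=\sum_\tau\gamma_\tau$ by its deterministic lower bound before taking expectations --- is exactly the step the paper performs when it bounds $\mathbb{E}[s_t(f(\tilde z_t)-f_\star)]$ from below by $\frac{\alpha(t+1)}{\sqrt{\beta+G_f^2t}}\,\mathbb{E}[f(\tilde z_t)-f_\star]$, so there is no substantive difference.
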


\section{Numerical Experiments}
\label{sec:numerical-experiments}

This section demonstrates empirical performance of the proposed method on a number of convex optimization problems. We also present an experiment on neural networks. Our experiments are performed in Python 3.7 with Intel Core i9-9820X CPU @ 3.30GHz. We present more experiments on isotonic regression and portfolio optimization in the supplementary materials. The source code for the experiments is available in the supplements. 

\subsection{Experiments on Convex Optimization with Smooth \texorpdfstring{$f$}{f}}
\label{sec:experiments-smooth}

\begin{figure}[!t]
\begin{center}
\includegraphics[width=\linewidth]{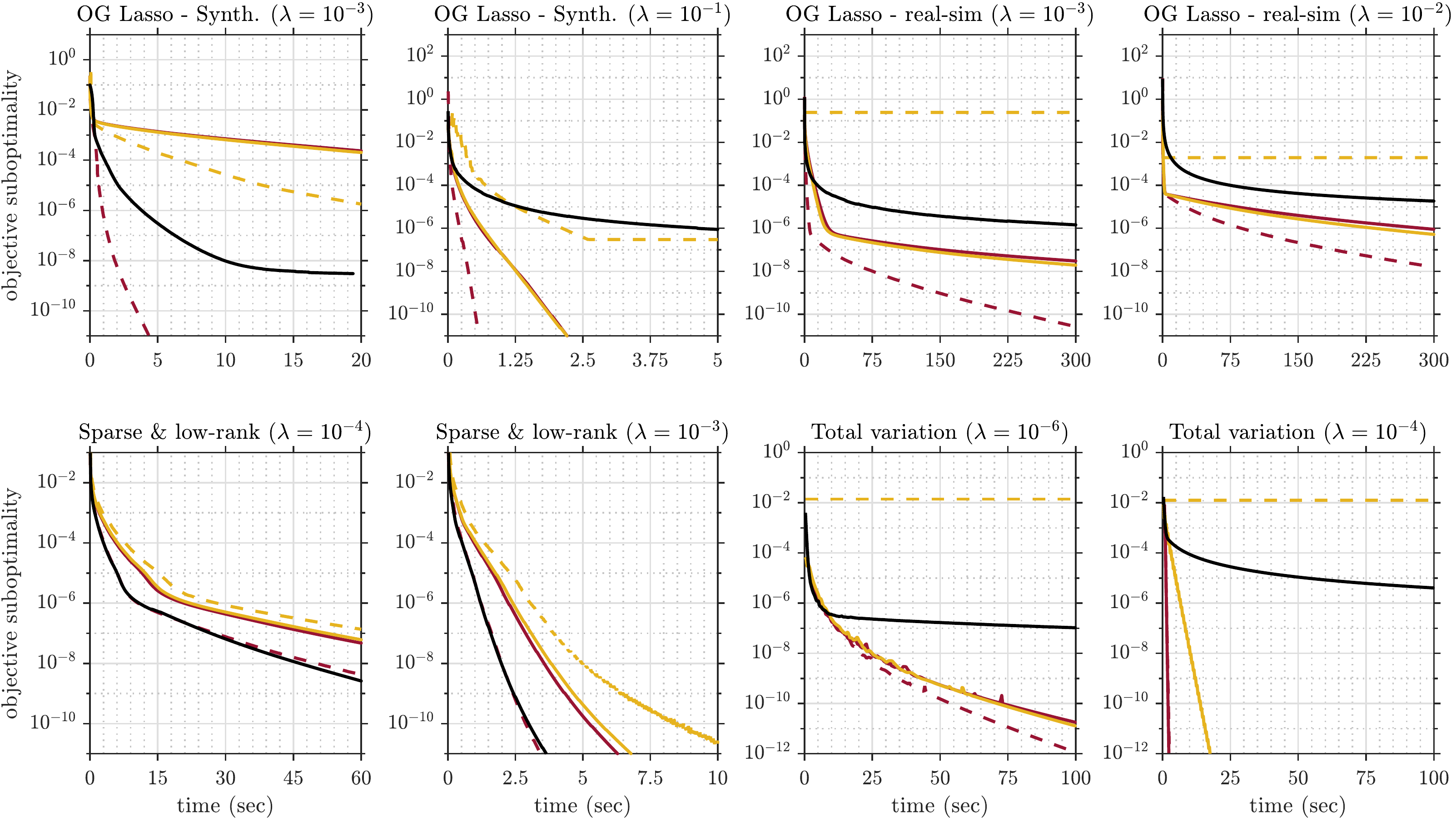}
\vspace{-1em}
\caption{Empirical comparison of 5 algorithms for \Prob with smooth $f$. Dashed lines represent the line-search variants of TOS and PDHG. The performance of \algo is between TOS-LS and PDHG-LS. TOS and PDHG require the knowledge of the smoothness constant, and TOS-LS uses the Lipschitz constant for one of the nonsmooth terms. }
\vspace{-0.5em}
\label{fig:smooth}
\end{center}
\end{figure}

In this subsection, we compare \algo with TOS, PDHG and their line-search variants TOS-LS and PDHG-LS. 
Our experiments are based on the benchmarks described in \citep{pedregosa2018adaptive} and their source code available in COPT Library \citep{copt} under the new BSD License. We implement \algo and investigate its performance on three different problems: 

$\triangleright$ Logistic regression with \emph{overlapping group lasso} penalty:
\begin{equation}
\min_{x \in \R^{n}} \quad \frac{1}{N} \sum_{i=1}^N \log(1 + \exp(-b_i \ip{a_i}{x})) + \lambda \sum_{G \in \mathcal{G}} \sqrt{|G|} \norm{x_G} + \lambda \sum_{H \in \mathcal{H}} \sqrt{|H|} \norm{x_H},
\end{equation}
where $\{({a}_1,{b}_1), \ldots, ({a}_N,{b}_N)\}$ is a given set of training examples, $\mathcal{G}$ and $\mathcal{H}$ are the sets of distinct groups and $| \cdot |$ denotes the cardinality. The model we use (from COPT) considers groups of size $10$ with $2$ overlapping coefficients. In this experiment, we use the benchmarks on synthetic data (dimensions $n=1002$, $N=100$) %
and real-sim dataset \citep{chang2011libsvm} ($n = 20958$, $N = 72309$). 

$\triangleright$ Image recovery with \emph{total variation} penalty:
\begin{equation}
\min_{X \in \R^{m \times n}} \quad \norm{Y - \mathcal{A}(X)}_F^2 + \lambda \sum_{i=1}^m\sum_{j=1}^{n-1}|X_{i,j+1} - X_{i,j}| + \lambda  \sum_{j=1}^n\sum_{i=1}^{m-1}|X_{i+1,j} - X_{i,j}|,
\end{equation}
where $Y$ is a given blurred image and $\mathcal{A}: \R^{m\times n} \to \R^{m\times n}$ is a linear operator (blur kernel). The benchmark in COPT solves this problem for an image of size $153 \times 115$ with a provided blur kernel.

$\triangleright$ Sparse and low-rank matrix recovery via \emph{$\ell_1$ and nuclear-norm} regularizations:
\begin{equation}
\min_{X \in \R^{n \times n}} \quad \frac{1}{N} \sum_{i=1}^N \texttt{huber}(b_i - \ip{A_i}{X}) + \lambda \norm{X}_* + \lambda \norm{X}_{1}.
\end{equation}
We use \texttt{huber} loss. $\{(A_1,b_1), \ldots, (A_N,b_N)\}$ is a given set of measurements and $\norm{X}_{1}$ is the vector $\ell_1$-norm of $X$. The benchmark in COPT considers a symmetric ground truth matrix $X^\natural \in \R^{20 \times 20}$ and noisy synthetic measurements ($N=100$) where $A_i$ has Gaussian iid entries. $b_i = \ip{A_i}{X^\natural} + \omega_i$ where $\omega_i$ is generated from a zero-mean unit variance Gaussian distribution. 

At each problem, we consider two different values for the regularization parameter $\lambda$. We use all methods with their default parameters in the benchmark. For \algo, we discard $\beta$ and tune $\alpha$ by trying the powers of $10$. See the supplementary material for the behavior of the algorithm with different values of $\alpha$.  \Cref{fig:smooth} shows the results of this experiment. In most cases, the performance of \algo is between TOS-LS and PDHG-LS. Remark that TOS-LS is using the extra knowledge of the Lipschitz constant of $h$.  

\subsection{Experiments on Convex Optimization with Nonsmooth \texorpdfstring{$f$}{f}}
\label{sec:experiments-nonsmooth}

\begin{figure}[!ht]
\begin{center}
\includegraphics[width=\linewidth]{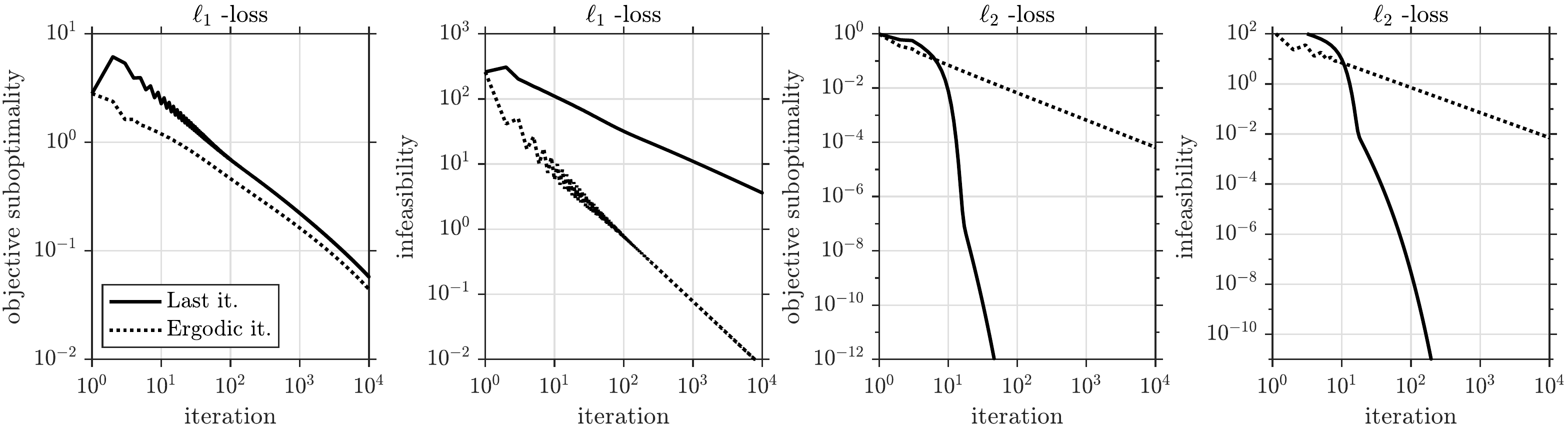}
\vspace{-1.5em}
\caption{Performance of \algo on image impainting and denoising problems with $\ell_1$ and $\ell_2$-loss functions. The empirical rates for $\ell_1$-loss match the guaranteed $\smash{\mathcal{O}(1/\sqrt{t})}$ rate in objective suboptimality and $\smash{\mathcal{O}(1/t)}$ in infeasibility. We observe a locally linear convergence rate for the $\ell_2$-loss.}
\vspace{-0.5em}
\label{fig:nonsmooth}
\end{center}
\end{figure}

We examine the empirical performance of \algo for nonsmooth problems on an image impainting and denoising task from  \citep{Zeng2018,yurtsever2018conditional}. We are given an occluded image (\emph{i.e.}, missing some pixels) of size $517 \times 493$, contaminated with salt and pepper noise of $10\%$ density. We use the following template where data fitting is measured in terms of vector $\ell_p$-norm:
\begin{equation}
\label{eqn:impainting-least-squares}
\min_{\boldsymbol{X} \in \R^{m \times n}} \quad \norm{\mathcal{A}(X) - Y}_p \quad \text{subject to} \quad \norm{X}_\ast \leq \lambda, ~~~ 0\leq X \leq 1,
\end{equation}
where $Y$ is the observed noisy image with missing pixels. This is essentially a matrix completion problem, $\mathcal{A}:\R^{m \times n} \to \R^{m \times n}$ is a linear map that samples the observed pixels in $Y$. In particular, we consider \eqref{eqn:impainting-least-squares} with $p=1$ and $p=2$. 
The $\ell_2$-loss %
is common in practice for matrix completion (often in the least-squares form) but it is not robust against the outliers induced by the salt and pepper noise.  $\ell_1$-loss is known to be more reliable for this task. 

The subgradients in both cases have a fixed norm at all points (note that the subgradients are binary valued for $\ell_1$-loss and unit-norm for $\ell_2$-loss), hence the analytical and the adaptive step-sizes are same up to a constant factor. 

\Cref{fig:nonsmooth} shows the results. 
The empirical rates for $p=1$ roughly match our guarantees in \Cref{thm:convergence-Lipschitz}. We observe a locally linear convergence rate when $\ell_2$-loss is used. Interestingly, the ergodic sequence converges faster than the last iterate for $p=1$ but significantly slower for $p=2$. The runtime of the two settings are approximately the same, with 67 msec per iteration on average. Despite the slower rates, we found $\ell_1$-loss more practical on this problem. A low-accuracy solution obtained by $1000$ iterations on $\ell_1$-loss yields a high quality recovery with PSNR 26.21 dB, whereas the PSNR saturates at 21.15 dB for the $\ell_2$-formulation. See the supplements for the recovered images and more details.

\subsection{An Experiment on Neural Networks}
\label{sec:experiments-mnist}

\begin{figure}[th]
\begin{center}
\includegraphics[width=\textwidth]{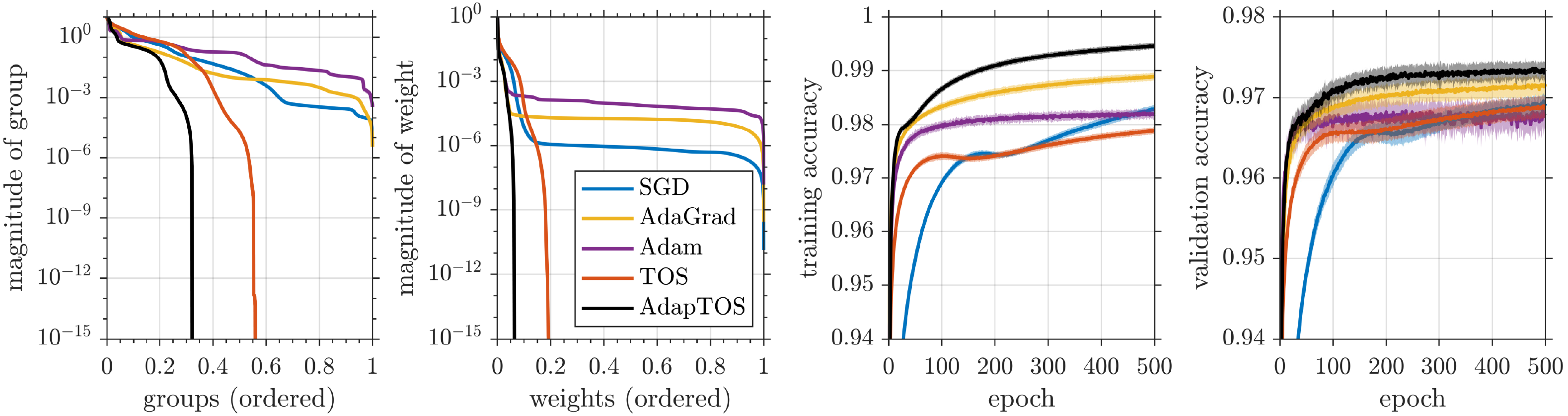} 
\vspace{-1.5em}
\caption{Comparison of methods on training neural networks with group lasso regularization. The outgoing connections of each neuron form a group. The first plot shows the magnitude of weights after 500 epochs. The second plot shows the absolute sum of outgoing weights from each neuron. x-axes are normalized by the total number of weights and neurons in these plots. More than 68\% of the neurons are inactive on the network trained by \algo. The third and fourth plots show the training and validation losses. This experiment is performed with 20 random seeds. The solid lines show the average performance and the shaded area represents $\pm$ standard deviation from the mean.}
\vspace{-0.5em}
\label{fig:nonconvex}
\end{center}
\end{figure}

In this section, we train a regularized deep neural network to test our methods on nonconvex optimization. %
We consider a regularized neural network problem formulation in \citep{scardapane2017group}. This problem involves a fully connected neural network with the standard cross-entropy loss function, a ReLu activation for the hidden layers, and the softmax activation for the output layer. Two regularizers are added to this loss function: The first one is the standard $\ell_1$ regularizer, and the second is the group sparse regularizer where the outgoing connections of each neuron is considered as a group. The goal is to force all outgoing connections from the same neurons to be simultaneously zero, so that we can safely remove the neurons from the network. This is shown as an effective way to obtain compact networks \citep{scardapane2017group}, which is crucial for the deployment of the learned parameters on resource-constrained devices such as smartphones \citep{blalock2020state}.

We reuse the open source implementation (built with Lasagne framework based on Theano) published in \citep{scardapane2017group} under BSD-2 License. We follow their experimental setup and instructions with MNIST database \citep{lecun1998mnist} containing 70k grayscale images ($28 \times 28$) of handwritten digits (split 75/25 into train and test partitions). We train a fully connected neural network with $784$ input features, three hidden layers ($400/300/100$) and 10-dimensional output layer.  Interested readers can find more details on the implementation in the supplementary material or in \citep{scardapane2017group}. 

\citet{scardapane2017group} use SGD and Adam with the subgradient of the overall objective. In contrast, our methods can leverage the prox-operators for the regularizers. \Cref{fig:nonconvex} compares the performance in terms of two measures: the sparsity of the parameters and the accuracy. On the left side, we see the spectrum of weight and neuron magnitudes. The advantage of using prox-operators is outstanding: More than 93\% of the weights are zero and 68\% of neurons are inactive when trained with \algo. In contrast, subgradient based methods can achieve only  approximately sparse solutions.%

The third and the fourth subplots present the training and test accuracies. Remarkably, \algo performs better than the state-of-the-art (both in train and test). Unfortunately, we could not achieve the same performance gain in preliminary experiments with more complex models like ResNet \citep{he2016deep}, where SGD with momentum shines. Interested readers can find the code for these preliminary experiments in the supplements. We leave the technical analysis and a comprehensive examination of \algo for nonconvex problems to a future work. 

\section{Conclusions}
\label{sec:conclusions}

We studied an extension of TOS that permits subgradients and stochastic gradients instead of the gradient step and established convergence guarantees for this extension. Moreover, we proposed an adaptive step-size rule (\algo) for the minimization of a convex function over the intersection of two convex sets. \algo guarantees a nearly optimal $\tilde{\mathcal{O}}(1/\sqrt{t})$ rate on the baseline setting, and it enjoys the faster $\tilde{\mathcal{O}}(1/t)$ rate when the problem is smooth and the solution is in the interior of feasible set. We present numerical experiments on various benchmark problems. The empirical performance of the method is promising. 

We conclude with a short list of open questions and follow-up directions: 
(i) In parallel to the subgradient method, we believe TOS can achieve $\mathcal{O}(1/t)$ rate guarantees in the nonsmooth setting if $f$ is strongly convex. The analysis remains open. 
(ii) The faster rate for \algo on smooth $f$ requires an extra assumption on the location of the solution. We believe this assumption can be removed, and leave this as an open problem.
(iii) We analyzed \algo only for a specific subclass of \Prob in which $g$ and $h$ are indicator functions. Extending this result for the whole class is a valuable question for future study. 

\begin{appendices}

\setcounter{equation}{0}
\renewcommand{\theequation}{S.\arabic{equation}}

\renewcommand{\thefigure}{S.\arabic{figure}}

\renewcommand{\thetheorem}{S.\arabic{theorem}}

\renewcommand{\thelemma}{S.\arabic{lemma}}

\renewcommand{\theremark}{S.\arabic{remark}}

\renewcommand{\theequation}{S.\arabic{equation}}

\section{Preliminaries}

We will use the following standard results in our analysis. 

\begin{lemma} \label{lem:prox-theorem}
Let $f: \mathbb{R}^n \to \mathbb{R} \cup \{+\infty\}$ be a proper closed and convex function. Then, for any $x, u \in \mathbb{R}^n$, the followings are equivalent:
\begin{enumerate}[label=(\roman*),topsep=0pt,itemsep=0ex,partopsep=1ex,parsep=1ex]
\item $u = \prox_f(x)$.
\item $x - u \in \partial f(u)$.
\item $\ip{x-u}{y-u} \leq f(y) - f(u)$ ~for any~ $y \in \mathbb{R}^n$. \\[-0.5em]
\end{enumerate}
\end{lemma}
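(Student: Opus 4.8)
The plan is to establish the cyclic chain of implications (i)~$\Rightarrow$~(ii)~$\Rightarrow$~(iii)~$\Rightarrow$~(i); since (ii)~$\Leftrightarrow$~(iii) is in fact immediate, the real content lies in (i)~$\Rightarrow$~(ii) and (iii)~$\Rightarrow$~(i).

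For (i)~$\Rightarrow$~(ii), recall that $\prox_f(x)$ is by definition the minimizer of $\Phi_x(y) := f(y) + \tfrac12\norm{x-y}^2$. Since $f$ is proper, closed and convex while $\tfrac12\norm{x-\cdot}^2$ is finite-valued, continuous and $1$-strongly convex, $\Phi_x$ is proper, closed and strongly convex, hence has a unique minimizer, so $u = \prox_f(x)$ is well defined. Fermat's rule gives $0 \in \partial\Phi_x(u)$, and because the quadratic term is everywhere finite and continuous the Moreau--Rockafellar sum rule applies with no further qualification, yielding $\partial\Phi_x(u) = \partial f(u) + (u - x)$. Hence $0 \in \partial f(u) + (u - x)$, i.e.\ $x - u \in \partial f(u)$.

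The equivalence (ii)~$\Leftrightarrow$~(iii) is just the definition of a subgradient of a convex function: $x - u \in \partial f(u)$ means $f(y) \ge f(u) + \ip{x-u}{y-u}$ for all $y \in \R^n$, which is exactly (iii). For (iii)~$\Rightarrow$~(i) I would complete the square: expanding $\norm{x-y}^2 = \norm{x-u}^2 - 2\ip{x-u}{y-u} + \norm{y-u}^2$ and invoking (iii) gives, for every $y \in \R^n$,
\[
\Phi_x(y) \;\ge\; f(u) + \tfrac12\norm{x-u}^2 + \tfrac12\norm{y-u}^2 \;=\; \Phi_x(u) + \tfrac12\norm{y-u}^2 \;\ge\; \Phi_x(u),
\]
with equality only at $y = u$. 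Thus $u$ is the unique minimizer of $\Phi_x$, i.e.\ $u = \prox_f(x)$.

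I expect the only genuinely delicate point to be the appeal to the subdifferential sum rule in (i)~$\Rightarrow$~(ii); a careful reader might worry about constraint qualifications, but this is harmless here because the regularizer $\tfrac12\norm{x-\cdot}^2$ is real-valued and continuous on all of $\R^n$. Everything else is elementary algebra together with the definition of a subgradient, so this lemma could equally well simply be cited as a standard fact (Moreau's theorem).
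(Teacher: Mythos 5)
Your proof is correct: the chain (i)~$\Rightarrow$~(ii) via Fermat's rule and the sum rule (valid here since the quadratic is finite and continuous), the definitional equivalence (ii)~$\Leftrightarrow$~(iii), and the completing-the-square argument for (iii)~$\Rightarrow$~(i) are all sound. The paper itself gives no proof of this lemma --- it is stated in the preliminaries as a standard result --- so your closing remark that it could simply be cited as a known characterization of the prox-operator is exactly what the authors do.
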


\begin{corollary}[Firm non-expansivity of the prox-operator] \label{lem:prox-firm-non-expansivity}
Let $f: \mathbb{R}^n \to \mathbb{R} \cup \{+\infty\}$ be a proper closed and convex function. Then, for any $x, u \in \mathbb{R}^n$, the followings hold:
\begin{align*}
\text{(non-expansivity)}\qquad &  \norm{\prox_f(x) - \prox_f(y)} \leq \norm{x - y}. \\[0.5em]
\text{(firm non-expansivity)}\qquad & \norm{\prox_f(x) - \prox_f(y)}^2 \leq \ip{x-y}{\prox_f(x) - \prox_f(y)}.
\end{align*}
\end{corollary}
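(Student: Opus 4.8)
The plan is to derive the firm non-expansivity inequality directly from the variational characterization of the proximal operator in \Cref{lem:prox-theorem}(iii), and then obtain ordinary non-expansivity as an immediate corollary via Cauchy--Schwarz.

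First I would set $u = \prox_f(x)$ and $v = \prox_f(y)$. Applying part (iii) of \Cref{lem:prox-theorem} at the point $u$ with test point $v$ gives $\ip{x-u}{v-u} \leq f(v) - f(u)$, and applying it at $v$ with test point $u$ gives $\ip{y-v}{u-v} \leq f(u) - f(v)$. Adding the two inequalities makes the right-hand side cancel, leaving $\ip{x-u}{v-u} + \ip{y-v}{u-v} \leq 0$. Rewriting $\ip{y-v}{u-v} = \ip{v-y}{v-u}$ and grouping the two inner products over the common vector $v-u$, this becomes $\ip{(x-y)-(u-v)}{v-u}\leq 0$; multiplying by $-1$ and expanding yields $\ip{x-y}{u-v}\geq \norm{u-v}^2$, which is exactly the claimed firm non-expansivity $\norm{\prox_f(x)-\prox_f(y)}^2 \leq \ip{x-y}{\prox_f(x)-\prox_f(y)}$.

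For the ordinary non-expansivity bound I would simply apply Cauchy--Schwarz to the right-hand side of the firm non-expansivity estimate just obtained: $\norm{u-v}^2 \leq \ip{x-y}{u-v} \leq \norm{x-y}\,\norm{u-v}$. If $\norm{u-v}=0$ the conclusion $\norm{u-v}\leq\norm{x-y}$ is trivial; otherwise dividing both sides by $\norm{u-v}$ gives it.

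There is no substantive obstacle here — the argument is elementary once \Cref{lem:prox-theorem} is in hand — so the only points that need care are the sign bookkeeping when adding the two subgradient-type inequalities and the degenerate case $\prox_f(x)=\prox_f(y)$ in the final division. One could alternatively run the same computation through monotonicity of the subdifferential using the equivalence (i)$\Leftrightarrow$(ii) of \Cref{lem:prox-theorem}, but the direct route above is the shortest.
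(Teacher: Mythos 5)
Your proof is correct and is precisely the standard derivation the paper intends: the statement is presented as a corollary of \Cref{lem:prox-theorem} with no written proof, and your argument—summing the two instances of part (iii), rearranging to get firm non-expansivity, then applying Cauchy--Schwarz (with the degenerate case handled)—is exactly that derivation. Nothing is missing.
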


\section{Fixed Point Characterization}

This appendix presents the proof for \Cref{lem:fixed-point-characterization}. 
This is a straightforward extension of Lemma~2.2 in \citep{davis2017three} to permit subgradients. 
We will use this lemma in the next section to prove the boundedness of $y_t$ in \Cref{alg:three-operator-splitting}.

\subsection{Proof of \texorpdfstring{\Cref{lem:fixed-point-characterization}}{Lemma~\ref{lem:fixed-point-characterization}}}

Define $z =  \prox_{\gamma g}(y)$ and $x = \prox_{\gamma h}(2z - y - \gamma u)$. 
Then, $\texttt{TOS}_\gamma(y, u) := y -z + x$. 

Suppose there exists $u \in \partial f(z)$ such that $\texttt{TOS}(y,u) = y$. 
Then, we must have $z = x$. Moreover, by \Cref{lem:prox-theorem}, we have
\begin{align}
z = \prox_{\gamma g}(y) &\iff y - z \in \gamma \partial g(z), \label{eqn:supp-fpc-a} \\
 \text{and}\quad z = x = \prox_{\gamma h}(2z-y-\gamma u) %
&\iff z-y-\gamma u \in \gamma \partial h(x) . \label{eqn:supp-fpc-b}
\end{align}
By summing up \eqref{eqn:supp-fpc-a} and \eqref{eqn:supp-fpc-b}, we observe
\begin{align}
0 \in \gamma (u+\partial g(z) + \partial h(x)) \implies 0 \in \partial f(z)+ \partial g(z)+\partial h(z) = %
\partial \phi(z),
\end{align}
which proves that $z$ is an optimal solution of \Prob since $\phi$ is convex. 

To prove the reverse direction, suppose $z$ is an optimal solution, i.e., there exists $u \in \partial f(z), v \in \partial g(z), w \in \partial h(z)$ such that $u + v + w = 0$. %
By \Cref{lem:prox-theorem}, we have
\begin{align}
z = \prox_{\gamma g}(y)  &\iff  y-z \in \gamma \partial g(z), \\
 \text{and}\quad  x = \prox_{\gamma h}(2z - y - \gamma u) &\iff 2z-x-y-\gamma u \in \gamma \partial h(x). 
\end{align}
Now, let $y = z + \gamma v$. Then,
\begin{align}
2z-x-y-\gamma u 
= z - x - \gamma (u + v)
= z - x + \gamma w. 
\end{align}
Therefore, we have $z - x + \gamma w \in \partial h(x)$. Again, due to \Cref{lem:prox-theorem}, this means $x = \prox_{\gamma h}(z + \gamma w)$.  
We also know $w \in \partial h(z) \iff z + \gamma w - z \in \partial \gamma h(z) \iff z = \prox_{\gamma h}(z + \gamma w)$. 
However, since $h$ is convex, its prox-operator is unique, hence, $x = z$ and $\texttt{TOS}_\gamma (y,u) = y$.

\section{Boundedness Guarantees}

\begin{theorem}
\label{thm:nonsmooth-y-bounded}
Consider \Prob and employ TOS (\Cref{alg:three-operator-splitting}) with subgradient steps ${u_t \in \partial f(z_t)}$
and a fixed step-size $\gamma = \gamma_0 / \sqrt{T+1}$ for some $\gamma_0 > 0$. Assume that $\norm{u_t} \leq G_f$ for all $t$. Then, 
\begin{align}
\norm{y_{T+1} - y_\star} 
\leq \norm{y_0-y_\star} + 2 \gamma_0 G_f
\end{align}
where $y_\star$ is a fixed point of TOS.
\end{theorem}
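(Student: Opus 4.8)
The plan is to establish a \emph{squared}-distance recursion of the form $\norm{y_{t+1}-y_\star}^2 \le \norm{y_t-y_\star}^2 + 4\gamma^2 G_f^2$, with \emph{no} negative multiple of $\norm{y_t-y_\star}^2$ and no cross term linear in $\norm{y_t-y_\star}$. Summing this over $t=0,\dots,T$ with $\gamma=\gamma_0/\sqrt{T+1}$ gives $\norm{y_{T+1}-y_\star}^2 \le \norm{y_0-y_\star}^2 + 4(T+1)\gamma^2 G_f^2 = \norm{y_0-y_\star}^2 + 4\gamma_0^2 G_f^2$, and $\sqrt{a^2+b^2}\le a+b$ for $a,b\ge 0$ then yields the stated bound. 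It is essential to work with squared distances: the crude estimate $\norm{y_{t+1}-y_\star}\le\norm{y_t-y_\star}+\mathcal{O}(\gamma G_f)$, which one gets from non-expansivity together with a prox-perturbation bound, accumulates to $\mathcal{O}(T\gamma G_f)=\mathcal{O}(\sqrt{T+1}\,\gamma_0 G_f)$ over the horizon, which is too weak by a $\sqrt{T+1}$ factor.

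\emph{Reference objects.} Since $y_\star$ is a fixed point, \Cref{lem:fixed-point-characterization} (more precisely, its proof) supplies a subgradient $u_\star\in\partial f(z_\star)$, with $z_\star:=\prox_{\gamma g}(y_\star)$, such that $x_\star:=\prox_{\gamma h}(2z_\star-y_\star-\gamma u_\star)=z_\star$; moreover $\norm{u_\star}\le G_f$ because $z_\star\in\dom(g)$ and the boundedness assumption is equivalent to $f$ being $G_f$-Lipschitz on $\dom(g)$. Put $A:=y_t-y_\star$, $B:=z_t-z_\star$, $C:=x_t-x_\star$ and $U:=\gamma(u_t-u_\star)$, so that $y_{t+1}-y_\star=A-B+C$ and $(2z_t-y_t-\gamma u_t)-(2z_\star-y_\star-\gamma u_\star)=2B-A-U$.

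\emph{One-step bound.} Firm non-expansivity of $\prox_{\gamma g}$ (\Cref{lem:prox-firm-non-expansivity}) at $y_t,y_\star$ gives $\norm{B}^2\le\ip{A}{B}$, hence $\norm{A-B}^2\le\norm{A}^2-\norm{B}^2$; firm non-expansivity of $\prox_{\gamma h}$ at the two points above gives $\norm{C}^2\le\ip{2B-A-U}{C}$. Expanding $\norm{A-B+C}^2=\norm{A-B}^2+2\ip{A-B}{C}+\norm{C}^2$ and substituting both inequalities, the quadratic-in-$C$ terms collapse and one obtains
\begin{align}
\norm{y_{t+1}-y_\star}^2 \;\le\; \norm{y_t-y_\star}^2 - \norm{B-C}^2 - 2\ip{U}{C}.
\end{align}
Now I use $y_{t+1}=y_t-z_t+x_t$ and $x_\star=z_\star$ twice: first, $B-C=z_t-x_t=y_t-y_{t+1}$; second, $C=(z_t-z_\star)+(x_t-z_t)=B-(y_t-y_{t+1})$. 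Hence
\begin{align}
-2\ip{U}{C} \;=\; -2\ip{U}{B} + 2\ip{U}{y_t-y_{t+1}} \;\le\; -2\ip{U}{B} + \norm{y_t-y_{t+1}}^2 + \norm{U}^2 .
\end{align}
Finally $\ip{U}{B}=\gamma\ip{u_t-u_\star}{z_t-z_\star}\ge 0$ by monotonicity of $\partial f$, and $\norm{U}^2=\gamma^2\norm{u_t-u_\star}^2\le 4\gamma^2 G_f^2$. Plugging in, the $-\norm{B-C}^2=-\norm{y_t-y_{t+1}}^2$ and $+\norm{y_t-y_{t+1}}^2$ terms cancel, leaving $\norm{y_{t+1}-y_\star}^2\le\norm{y_t-y_\star}^2+4\gamma^2 G_f^2$, and summing concludes the proof as described above.

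\emph{Main obstacle.} The whole difficulty is concentrated in the penultimate display: one must resist bounding $\ip{U}{C}$ by Cauchy--Schwarz against $\norm{A}$ (which reintroduces the fatal $\sqrt{T+1}$), and instead split $C=B-(y_t-y_{t+1})$ so that the monotone part $\ip{U}{B}$ only helps while the remaining part is absorbed \emph{exactly} by the $-\norm{y_t-y_{t+1}}^2$ that firm non-expansivity supplies for free. This is precisely where the argument departs from the averagedness/Fej\'er-monotone analysis of \citep{davis2017three}, which is no longer available once $u_t$ is merely a bounded subgradient.
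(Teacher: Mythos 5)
Your overall strategy is exactly the paper's: a squared-distance recursion built from firm non-expansivity of both prox operators, monotonicity of $\partial f$ to discard $\ip{U}{B}$, Young's inequality absorbed by the $-\norm{y_{t+1}-y_t}^2$ term, telescoping, and $\sqrt{a^2+b^2}\le a+b$ at the end; the per-step increment $4\gamma^2G_f^2$ and the final bound match the paper's proof of \Cref{thm:nonsmooth-y-bounded}. However, there is a genuine algebraic gap in how you derive the one-step display. Substituting $\norm{C}^2\le\ip{2B-A-U}{C}$ into $\norm{A-B+C}^2=\norm{A-B}^2+2\ip{A-B}{C}+\norm{C}^2$ leaves $\norm{A-B}^2+\ip{A}{C}-\ip{U}{C}$; if you then weaken $\norm{A-B}^2$ to $\norm{A}^2-\norm{B}^2$, you obtain $\norm{y_{t+1}-y_\star}^2\le\norm{A}^2-\norm{B}^2+\ip{A}{C}-\ip{U}{C}$, which still carries the uncancelled cross term $\ip{A}{C}=\ip{y_t-y_\star}{x_t-x_\star}$ --- precisely the kind of term you correctly identify as fatal --- and is not equivalent to (nor stronger than) the claimed $\norm{A}^2-\norm{B-C}^2-2\ip{U}{C}$. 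The quadratic-in-$C$ terms do collapse, but a linear-in-$C$ term involving $A$ survives, so the boxed one-step inequality does not follow from the substitutions as described.

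The repair is small and is exactly what the paper does: keep the first firm non-expansivity estimate in inner-product form, $\norm{A-B}^2\le\ip{A-B}{A}$, instead of converting it to $\norm{A}^2-\norm{B}^2$. The leftover $\ip{A}{C}$ then combines with it to give $\norm{y_{t+1}-y_\star}^2\le\ip{A-B+C}{A}-\ip{U}{C}=\ip{y_{t+1}-y_\star}{y_t-y_\star}-\ip{U}{C}$, and the polarization identity $\ip{v}{A}=\tfrac12\big(\norm{v}^2+\norm{A}^2-\norm{v-A}^2\big)$ with $v=y_{t+1}-y_\star$ yields precisely your display, including the crucial $-\norm{B-C}^2=-\norm{y_{t+1}-y_t}^2$ that later absorbs the Young term. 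From that point onward your argument --- splitting $C=B-(y_t-y_{t+1})$, discarding $\ip{U}{B}\ge 0$ by monotonicity, bounding $\norm{U}^2\le 4\gamma^2G_f^2$, summing, and taking the square root --- is correct and coincides with the paper's.
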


\begin{proof}
By \Cref{lem:fixed-point-characterization}, there exists $u_\star \in \partial f (x_\star)$ such that 
\begin{align}
x_\star = \prox_{\gamma g} (y_\star) 
=  \prox_{\gamma h} (2x_\star - y_\star - \gamma u_\star) 
= z_\star. 
\end{align}
We decompose $\norm{y_{t+1} - y_\star}^2$ as
\begin{align}
\norm{y_{t+1} - y_\star}^2 
& = \norm{y_t-z_t+x_t - y_\star + x_\star-x_\star}^2 \notag \\
& = \norm{y_t-z_t - y_\star + x_\star}^2 + \norm{x_t - x_\star}^2 + 2 \ip{x_t - x_\star}{y_t-z_t - y_\star + x_\star}. \label{eqn:ytys2-decomposition-a}
\end{align}
Since $z_t = \prox_{\gamma g} (y_t)$ and $x_\star = \prox_{\gamma g} (y_\star)$, by the firm non-expansivity of the prox-operator, we have
\begin{align} 
\norm{y_t-z_t - y_\star + x_\star}^2
& = \ip{y_t-z_t - y_\star + x_\star}{y_t - y_\star} - \ip{y_t-z_t - y_\star + x_\star}{z_t - x_\star} \notag \\
& \leq \ip{y_t-z_t - y_\star + x_\star}{y_t - y_\star}. \label{eqn:ytzt2-firm-non-expansivity-a}
\end{align}
Similarly, since  $x_t = \prox_{\gamma h} (2z_t - y_t - \gamma u_t)$ and $x_\star = \prox_{\gamma h} (2x_\star - y_\star - \gamma u_\star)$, by the firm non-expansivity of the prox-operator, we have 
\begin{equation} \label{eqn:xt2-firm-non-expansivity-a}
\norm{x_t - x_\star}^2
\leq \ip{x_t - x_\star}{(2z_t - y_t - \gamma u_t) - (2x_\star - y_\star - \gamma u_\star)}.
\end{equation}
By combining \eqref{eqn:ytys2-decomposition-a}, \eqref{eqn:ytzt2-firm-non-expansivity-a} and \eqref{eqn:xt2-firm-non-expansivity-a}, we get
\begin{align}
\norm{y_{t+1} - y_\star}^2 
& \leq \ip{y_t-z_t +x_t - y_\star}{y_t - y_\star} - \gamma \ip{x_t - x_\star}{u_t - u_\star} \notag \\
& = \ip{y_{t+1} - y_\star}{y_t - y_\star} - \gamma \ip{x_t - x_\star}{u_t - u_\star} \notag \\
& = \frac{1}{2} \norm{y_{t+1} - y_\star}^2  + \frac{1}{2} \norm{y_t-y_\star}^2 - \frac{1}{2} \norm{y_{t+1} - y_t}^2 - \gamma \ip{x_t - x_\star}{u_t - u_\star}. \label{eqn:ytys2-decomposition-2}
\end{align}
Since $u_t \in \partial f(z_t)$ and $u_\star \in \partial f(x_\star)$, we have
\begin{align}
- \ip{x_t - x_\star}{u_t - u_\star}
& = - \ip{z_t - x_\star}{u_t - u_\star} - \ip{x_t - z_t}{u_t - u_\star} \notag \\ 
& \leq  - \ip{x_t - z_t}{u_t - u_\star} \notag \\ 
& \leq \frac{1}{2\gamma} \norm{x_t - z_t}^2 + \frac{\gamma}{2} \norm{u_t - u_\star}^2,
\end{align}
where we used Young's inequality in the last line. 
We use this inequality in \eqref{eqn:ytys2-decomposition-2} to obtain
\begin{align}
\norm{y_{t+1} - y_\star}^2 
\leq \norm{y_t-y_\star}^2 + \gamma^2\norm{u_t - u_\star}^2.
\end{align}
If we sum this inequality from $t = 0$ to $T$, we get
\begin{align}
\norm{y_{T+1} - y_\star}^2 
\leq \norm{y_0-y_\star}^2 + \gamma^2 \sum_{\tau = 0}^T \norm{u_\tau - u_\star}^2. 
\end{align}
Finally, due to the bounded subgradients assumption, we have $ \norm{u_\tau - u_\star} \leq 2G_f$, hence
\begin{align}
\norm{y_{T+1} - y_\star}^2 
& \leq \norm{y_0-y_\star}^2 + 4 G_f^2 \gamma^2 (T+1)
= \norm{y_0-y_\star}^2 + 4 G_f^2 \gamma_0^2.
\end{align}
We complete the proof by taking the square-root of both sides,
\begin{align}
\norm{y_{T+1} - y_\star}
\le \sqrt{ \norm{y_0-y_\star}^2 + 4 G_f^2\gamma_0^2} \le \norm{y_0-y_\star} + 2G_f\gamma_0.
\end{align}
\end{proof}

\begin{theorem}
\label{thm:stochastic-y-bounded}
Consider \Probsto and employ TOS (\Cref{alg:three-operator-splitting}) with a fixed step-size $\gamma = \gamma_0 / \sqrt{T+1}$ for some $\gamma_0 > 0$. Suppose we are receiving the update directions $u_t$ from an unbiased stochastic first-order oracle with bounded variance such that
\begin{align}
    \hat{u}_t := \mathbb{E}[u_t | z_t] \in \partial f(z_t) \quad \text{and} \quad \mathbb{E}[\norm{u_t - \hat{u}_t}^2] \leq \sigma^2  ~~ \text{for some $\sigma < +\infty$.}
\end{align}
Assume that $\norm{\hat{u}_t} \leq G_f$ for all $t$. Then, 
\begin{align}
\mathbb{E}[\norm{y_{T+1} - y_\star}] 
\leq \norm{y_0-y_\star} + \gamma_0 (2 G_f + \sigma)
\end{align}
where $y_\star$ is a fixed point of TOS.
\end{theorem}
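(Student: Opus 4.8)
The plan is to run the proof of \Cref{thm:nonsmooth-y-bounded} essentially unchanged up to the point where subgradient monotonicity is used, and then to treat the stochastic perturbation $u_t-\hat u_t$ as a zero-conditional-mean term plus a variance contribution of size $\sigma^2$. First I would invoke \Cref{lem:fixed-point-characterization} to fix $u_\star\in\partial f(x_\star)$ with $x_\star=\prox_{\gamma g}(y_\star)=\prox_{\gamma h}(2x_\star-y_\star-\gamma u_\star)=z_\star$, decompose $\norm{y_{t+1}-y_\star}^2$ exactly as in \eqref{eqn:ytys2-decomposition-a}, apply firm non-expansivity of $\prox_{\gamma g}$ to the pair $(z_t,x_\star)$ and of $\prox_{\gamma h}$ to the pair $(x_t,x_\star)$ (with defining arguments $2z_t-y_t-\gamma u_t$ and $2x_\star-y_\star-\gamma u_\star$), and repeat the algebra leading to \eqref{eqn:ytys2-decomposition-2}. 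This produces
\[
\norm{y_{t+1}-y_\star}^2\le\norm{y_t-y_\star}^2-\norm{y_{t+1}-y_t}^2-2\gamma\ip{x_t-x_\star}{u_t-u_\star},
\]
and nothing so far has used that $u_t$ is a genuine subgradient.

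The crucial step is to split the inner product at $z_t$ rather than at $x_t$: write $-\ip{x_t-x_\star}{u_t-u_\star}=-\ip{z_t-x_\star}{u_t-u_\star}-\ip{x_t-z_t}{u_t-u_\star}$ and insert $u_t-u_\star=(\hat u_t-u_\star)+(u_t-\hat u_t)$ in the first piece. Since $\hat u_t\in\partial f(z_t)$ and $u_\star\in\partial f(x_\star)$, monotonicity of $\partial f$ gives $\ip{z_t-x_\star}{\hat u_t-u_\star}\ge0$, so the first piece is bounded by $-\ip{z_t-x_\star}{u_t-\hat u_t}$; for the second piece Young's inequality yields $\norm{x_t-z_t}^2+\gamma^2\norm{u_t-u_\star}^2$, and $\norm{x_t-z_t}^2=\norm{y_{t+1}-y_t}^2$ cancels the remaining negative slack. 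The net inequality is $\norm{y_{t+1}-y_\star}^2\le\norm{y_t-y_\star}^2-2\gamma\ip{z_t-x_\star}{u_t-\hat u_t}+\gamma^2\norm{u_t-u_\star}^2$.

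Next I would take total expectations. Conditioning on $z_t$ (equivalently on the history up to iteration $t$), the vectors $z_t,x_\star,\hat u_t$ are fixed and $\mathbb{E}[u_t-\hat u_t\mid z_t]=0$, so $\mathbb{E}\ip{z_t-x_\star}{u_t-\hat u_t}=0$; expanding $\norm{u_t-u_\star}^2=\norm{u_t-\hat u_t}^2+2\ip{u_t-\hat u_t}{\hat u_t-u_\star}+\norm{\hat u_t-u_\star}^2$, the same conditioning kills the cross term, leaving $\mathbb{E}\norm{u_t-u_\star}^2\le\sigma^2+4G_f^2$ by the variance bound and $\norm{\hat u_t-u_\star}\le2G_f$ (as in \Cref{thm:nonsmooth-y-bounded}). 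Hence $\mathbb{E}\norm{y_{t+1}-y_\star}^2\le\mathbb{E}\norm{y_t-y_\star}^2+\gamma^2(\sigma^2+4G_f^2)$; summing over $t=0,\dots,T$ with $\gamma^2(T+1)=\gamma_0^2$ gives $\mathbb{E}\norm{y_{T+1}-y_\star}^2\le\norm{y_0-y_\star}^2+\gamma_0^2(\sigma^2+4G_f^2)$, and finally Jensen's inequality $\mathbb{E}\norm{\cdot}\le\sqrt{\mathbb{E}\norm{\cdot}^2}$ together with $\sqrt{\norm{y_0-y_\star}^2+\gamma_0^2(\sigma^2+4G_f^2)}\le\norm{y_0-y_\star}+\gamma_0(2G_f+\sigma)$ closes the argument.

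The main obstacle — and essentially the only new ingredient relative to \Cref{thm:nonsmooth-y-bounded} — is the noise term: one cannot split the inner product at $x_t$ as one would deterministically, because $x_t$ depends on $u_t$ and so $\mathbb{E}[\ip{x_t-x_\star}{u_t-\hat u_t}\mid z_t]\ne0$. Splitting at $z_t$ is precisely what turns the perturbation into a martingale-difference-type quantity of zero conditional mean, and the extra $\ip{x_t-z_t}{u_t-u_\star}$ term this introduces costs nothing, since it is absorbed by the $\norm{y_{t+1}-y_t}^2$ slack that was already going to be discarded.
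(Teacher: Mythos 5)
Your proposal is correct and follows essentially the same route as the paper's proof: the same reduction to the inequality $\norm{y_{t+1}-y_\star}^2\le\norm{y_t-y_\star}^2-\norm{y_{t+1}-y_t}^2-2\gamma\ip{x_t-x_\star}{u_t-u_\star}$, the same splitting of the inner product at $z_t$ so that monotonicity of $\partial f$ handles $\ip{z_t-x_\star}{\hat u_t-u_\star}$ and the noise term $\ip{z_t-x_\star}{u_t-\hat u_t}$ has zero conditional mean, the same Young's inequality absorbing $\norm{x_t-z_t}^2=\norm{y_{t+1}-y_t}^2$, and the same variance decomposition of $\mathbb{E}\norm{u_t-u_\star}^2\le 4G_f^2+\sigma^2$ followed by summation and Jensen's inequality. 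The only cosmetic difference is that you apply Young's inequality before taking expectations while the paper does so after; the resulting bound is identical.
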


\begin{proof}
We follow the same steps as in the proof of \Cref{thm:nonsmooth-y-bounded} until \eqref{eqn:ytys2-decomposition-2}:
\begin{align}
\norm{y_{t+1} - y_\star}^2 
& \leq \norm{y_t-y_\star}^2 - \norm{y_{t+1} - y_t}^2 - 2 \gamma \ip{x_t - x_\star}{u_t - u_\star}. \label{eqn:ytys2-decomposition-2-repeat}
\end{align}
Then, we need to take noise into account:
\begin{align}
-\ip{x_t - x_\star}{u_t - u_\star} 
& = - \ip{x_t - z_t}{u_t - u_\star} - \ip{z_t - x_\star}{u_t - \hat{u}_t} - \ip{z_t - x_\star}{\hat{u}_t - u_\star} \notag \\
& \leq - \ip{x_t - z_t}{u_t - u_\star} - \ip{z_t - x_\star}{u_t - \hat{u}_t} . \label{eqn:eqn:stochastic-boundedness-exp-a}
\end{align}
We take the expectation of both sides and get
\begin{align}
- \mathbb{E}[\ip{x_t - x_\star}{u_t-  u_\star}]
& \leq - \mathbb{E}[\ip{x_t - z_t}{u_t -  u_\star}] \notag \\
& \leq \frac{1}{2\gamma}\mathbb{E}[\norm{x_t - z_t}^2] + \frac{\gamma}{2}\mathbb{E}[\norm{ u_\star - u_t}^2] \notag \\
& = \frac{1}{2\gamma}\mathbb{E}[\norm{x_t - z_t}^2] + \frac{\gamma}{2}\mathbb{E}[\norm{ u_\star - \hat{u}_t}^2] + \frac{\gamma}{2} \mathbb{E}[\norm{\hat{u}_t - u_t}^2] \notag \\
& \leq \frac{1}{2\gamma}\mathbb{E}[\norm{x_t - z_t}^2] + 2 \gamma G_f^2 + \frac{\gamma}{2}\sigma^2, \label{eqn:stochastic-boundedness-a}
\end{align}
where the last line holds due to the bounded subgradients and variance assumptions.

Now, we take the expectation of \eqref{eqn:ytys2-decomposition-2-repeat} and substitute \eqref{eqn:stochastic-boundedness-a} into it:
\begin{align}
\mathbb{E}[\norm{y_{t+1} - y_\star}^2]
\leq \mathbb{E}[\norm{y_t-y_\star}^2] + \gamma^2(4 G_f^2 + \sigma^2).
\end{align}
Finally, we sum this inequality over $t=0, 1, \ldots, T$: 
\begin{align}
\mathbb{E}[\norm{y_{T+1} - y_\star}^2]
\leq \norm{y_0-y_\star}^2 + \gamma_0^2(4 G_f^2 + \sigma^2).
\end{align}
By Jensen's inequality, we have $\mathbb{E}[\norm{y_{T+1} - y_\star}]^2 \leq \mathbb{E}[\norm{y_{T+1} - y_\star}^2]$. We finalize the proof by taking the square-root of both sides. 
\end{proof}

Next, we assume that $f$ is $L_f$-smooth instead of Lipschitz continuity. 

\begin{theorem}
Consider \Probsto and suppose $f$ is $L_f$-smooth on $\dom(g)$. Employ TOS (\Cref{alg:three-operator-splitting}) with a fixed step-size $\gamma_t = \gamma = \gamma_0 / \sqrt{T+1}$ for some $\gamma_0 \in [0,\frac{2}{L_f}]$. Suppose we are receiving the update directions $u_t$ from an unbiased stochastic first-order oracle with bounded variance such that
\begin{align}
    \mathbb{E}[u_t | z_t] = \nabla f(z_t) \quad \text{and} \quad \mathbb{E}[\norm{u_t - \nabla f(z_t)}^2] \leq \sigma^2  ~~ \text{for some $\sigma < +\infty$.}
\end{align} 
Then, 
\begin{align}
\mathbb{E}[\norm{y_{T+1} - y_\star}]
\leq \norm{y_0-y_\star} + 2\sigma\sqrt{\frac{\gamma_0}{L_f}},
\end{align}
where $y_\star$ is a fixed point of TOS.
\end{theorem}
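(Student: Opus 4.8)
The plan is to retrace the argument of \Cref{thm:stochastic-y-bounded}, replacing the use of the bounded-subgradient assumption by cocoercivity of $\nabla f$. First, \Cref{lem:fixed-point-characterization} gives a $u_\star \in \partial f(x_\star)$ with $x_\star = \prox_{\gamma g}(y_\star) = \prox_{\gamma h}(2x_\star - y_\star - \gamma u_\star) = z_\star$; since $f$ is smooth this forces $u_\star = \nabla f(x_\star)$. I would then copy the proof of \Cref{thm:stochastic-y-bounded} up to the recursion \eqref{eqn:ytys2-decomposition-2-repeat},
\begin{equation*}
\norm{y_{t+1} - y_\star}^2 \le \norm{y_t - y_\star}^2 - \norm{y_{t+1} - y_t}^2 - 2\gamma\ip{x_t - x_\star}{u_t - u_\star},
\end{equation*}
noting that this part uses only firm non-expansivity of the prox (\Cref{lem:prox-firm-non-expansivity}) and convexity, and in particular no norm bound on $u_t$.

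The new ingredient comes next. Using $y_{t+1} - y_t = x_t - z_t$ and splitting $\ip{x_t - x_\star}{u_t - u_\star} = \ip{x_t - z_t}{u_t - u_\star} + \ip{z_t - x_\star}{u_t - u_\star}$, I would apply the elementary inequality $-2\gamma\ip{a}{b} \le \norm{a}^2 + \gamma^2\norm{b}^2$ with $a = x_t - z_t$ and $b = u_t - u_\star$, so that the resulting $\norm{x_t - z_t}^2$ exactly cancels $-\norm{y_{t+1} - y_t}^2$, leaving
\begin{equation*}
\norm{y_{t+1} - y_\star}^2 \le \norm{y_t - y_\star}^2 + \gamma^2\norm{u_t - u_\star}^2 - 2\gamma\ip{z_t - x_\star}{u_t - u_\star}.
\end{equation*}
Taking expectation conditioned on $z_t$, unbiasedness removes the noise cross term and the bias-variance decomposition gives $\mathbb{E}[\norm{u_t - u_\star}^2 \mid z_t] \le \norm{\nabla f(z_t) - \nabla f(x_\star)}^2 + \sigma^2$, while the last inner product becomes $\ip{z_t - x_\star}{\nabla f(z_t) - \nabla f(x_\star)}$. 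The key move is to bound this not by monotonicity but by $\tfrac{1}{L_f}$-cocoercivity of $\nabla f$ (valid since $f$ is convex and $L_f$-smooth on $\dom(g)$, and $z_t, x_\star \in \dom(g)$), which yields $-2\gamma\ip{z_t - x_\star}{\nabla f(z_t) - \nabla f(x_\star)} \le -\tfrac{2\gamma}{L_f}\norm{\nabla f(z_t) - \nabla f(x_\star)}^2$. The gradient-difference term then carries total coefficient $\gamma^2 - \tfrac{2\gamma}{L_f} = \gamma\big(\gamma - \tfrac{2}{L_f}\big)$, which is $\le 0$ because $\gamma = \gamma_0/\sqrt{T+1} \le \gamma_0 \le 2/L_f$; dropping it leaves $\mathbb{E}[\norm{y_{t+1} - y_\star}^2] \le \mathbb{E}[\norm{y_t - y_\star}^2] + \gamma^2\sigma^2$.

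Summing over $t = 0, \ldots, T$ and using $(T+1)\gamma^2 = \gamma_0^2$ and then $\gamma_0 \le 2/L_f$ gives $\mathbb{E}[\norm{y_{T+1} - y_\star}^2] \le \norm{y_0 - y_\star}^2 + \gamma_0^2\sigma^2 \le \norm{y_0 - y_\star}^2 + \tfrac{4\gamma_0}{L_f}\sigma^2$; Jensen's inequality together with sub-additivity of the square root then produces the claimed bound $\mathbb{E}[\norm{y_{T+1} - y_\star}] \le \norm{y_0 - y_\star} + 2\sigma\sqrt{\gamma_0/L_f}$. I expect the only genuine obstacle to lie in the second paragraph: one must split the inner product so that the $\norm{x_t - z_t}^2$ cancellation goes through, and, more importantly, must invoke cocoercivity rather than mere monotonicity of $\nabla f$, since without a Lipschitz bound on $f$ the term $\gamma^2\norm{\nabla f(z_t) - \nabla f(x_\star)}^2$ would otherwise be uncontrolled. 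This is precisely where $L_f$-smoothness, and with it the step-size restriction $\gamma_0 \le 2/L_f$, becomes indispensable.
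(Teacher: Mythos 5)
Your plan is correct and delivers the stated bound (in fact with the marginally sharper intermediate constant $\gamma_0^2\sigma^2 \leq 2\gamma_0\sigma^2/L_f$ where the paper settles for $4\gamma_0\sigma^2/L_f$). It rests on the same two essential ingredients as the paper's proof --- the recursion \eqref{eqn:ytys2-decomposition-2-repeat} and the $\tfrac{1}{L_f}$-cocoercivity of $\nabla f$ --- but organizes the cross term $\ip{x_t - z_t}{u_t - u_\star}$ differently. The paper first splits $u_\star - u_t$ into $(u_\star - \nabla f(z_t)) + (\nabla f(z_t) - u_t)$ and applies Young's inequality with two free parameters, choosing $c_1 = L_f/2$ to cancel against the cocoercivity term and a $T$-dependent $c_2 = \tfrac{L_f}{2}(\sqrt{T+1}-1)$ so that the coefficient of $\norm{x_t - z_t}^2$ remains nonpositive under $\gamma_0 \leq 2/L_f$. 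You instead apply Young's inequality once, with the weight fixed so that $\norm{x_t-z_t}^2$ exactly cancels $-\norm{y_{t+1}-y_t}^2$, and perform the bias--variance split only afterwards in expectation; the surviving $\gamma^2\norm{\nabla f(z_t)-\nabla f(x_\star)}^2$ is then absorbed by cocoercivity precisely because $\gamma \leq 2/L_f$. Your bookkeeping is simpler, avoids the $T$-dependent parameter, and makes the role of the step-size restriction more transparent, while the paper's extra flexibility buys nothing here. One minor caveat: you invoke the variance bound conditionally on $z_t$, whereas the hypothesis states it unconditionally; this is immaterial since your argument goes through verbatim if every inequality is taken in full expectation, exactly as the paper does.
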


\begin{proof}
The proof is similar to the proof of \Cref{thm:stochastic-y-bounded}. We start from \eqref{eqn:ytys2-decomposition-2-repeat} and take the expectation:
\begin{align}
\mathbb{E}[\norm{y_{t+1} - y_\star}^2]
& \leq \mathbb{E}[\norm{y_t-y_\star}^2] - \mathbb{E}[\norm{y_{t+1} - y_t}^2] - 2 \gamma \mathbb{E}[\ip{x_t - x_\star}{u_t - u_\star}].
\label{eqn:eqn:ytys2-decomposition-2-repeat2}
\end{align}
We decompose the last term as follows:
\begin{align}
\mathbb{E}[\ip{x_t - x_\star}{u_\star - u_t}]
& = \mathbb{E}[\ip{x_t - z_t}{u_\star - u_t}] + \mathbb{E}[\ip{z_t - x_\star}{u_\star - \nabla f(z_t)}] + \mathbb{E}[\ip{z_t - x_\star}{\nabla f(z_t) - u_t}] \notag \\
& \leq \mathbb{E}[\ip{x_t - z_t}{u_\star - u_t}] - \frac{1}{L_f} \mathbb{E}[\norm{u_\star - \nabla f(z_t)}^2].
\label{eqn:cancellation1}
\end{align}
where the inequality holds since $f$ is $L_f$-smooth and convex. Moreover, we can bound the inner product term by using Young's inequality as follows:
\begin{align}
\mathbb{E}[\ip{x_t - z_t}{u_\star - u_t}]
& = \mathbb{E}\big[\ip{x_t - z_t}{u_\star - \nabla f(z_t)} + \ip{x_t - z_t}{\nabla f(z_t) - u_t}\big] \notag \\
& \leq \mathbb{E}\Big[\frac{c_1 + c_2}{2}\norm{x_t - z_t}^2 + \frac{1}{2c_1}\norm{u_\star - \nabla f(z_t)}^2 + \frac{1}{2c_2}\norm{\nabla f(z_t) - u_t}^2\Big]
\label{eqn:cancellation2}
\end{align}
for any $c_1,c_2 >0$. We choose $c_1 = L_f/2$, so that the corresponding terms in \eqref{eqn:cancellation1} and \eqref{eqn:cancellation2} cancel out. Combining \eqref{eqn:eqn:ytys2-decomposition-2-repeat2}, \eqref{eqn:cancellation1} and \eqref{eqn:cancellation2}, we get
\begin{align}
\mathbb{E}[\norm{y_{t+1} - y_\star}^2]
& \leq \mathbb{E}[\norm{y_t-y_\star}^2] - \mathbb{E}[\norm{y_{t+1} - y_t}^2] + \gamma (\tfrac{L_f}{2} + c_2) \mathbb{E}[\norm{x_t - z_t}^2] + \gamma\frac{\sigma^2}{c_2} \notag \\
& \leq \mathbb{E}[\norm{y_t-y_\star}^2] + \Big(\gamma \frac{L_f + 2c_2}{2} - 1\Big) \mathbb{E}[\norm{x_t - z_t}^2] + \gamma\frac{\sigma^2}{c_2}.\label{eqn:stochastic-bound-1}
\end{align}
Then, we choose $c_2 = \tfrac{L_f}{2}(\sqrt{T+1} - 1)$. With the condition $\gamma_0 \leq \tfrac{2}{L_f}$, this guarantees
\begin{align}
\gamma \frac{L_f + 2c_2}{2} - 1 
= \frac{\gamma_0}{\sqrt{T+1}} \frac{L_f \sqrt{T+1}}{2} - 1 
\leq  \gamma_0\frac{L_f}{2} - 1  \leq 0.
\end{align}
Returning to \eqref{eqn:stochastic-bound-1}, we now have
\begin{align}
\mathbb{E}[\norm{y_{t+1} - y_\star}^2]
& \leq \mathbb{E}[\norm{y_t-y_\star}^2] + \frac{\gamma_0}{\sqrt{T+1}}\frac{2\sigma^2}{L_f(\sqrt{T+1} - 1)} 
\leq \mathbb{E}[\norm{y_t-y_\star}^2] + \frac{4\gamma_0\sigma^2}{L_f(T+1)}.
\end{align}
Finally, we sum this inequality over $t=0$ to $T$, 
\begin{align}
\mathbb{E}[\norm{y_{T+1} - y_\star}^2]
\leq \norm{y_0-y_\star}^2 + \frac{4 \gamma_0 \sigma^2}{L_f}.
\end{align}
Remark that $\mathbb{E}[\norm{y_{T+1} - y_\star}]^2 \leq \mathbb{E}[\norm{y_{T+1} - y_\star}^2]$. We finish the proof by taking the square-root of both sides. 
\end{proof}

\section{Convergence Guarantees}

This section presents the technical analysis of our main results. 

\subsection{Proof of \texorpdfstring{\Cref{thm:convergence-Lipschitz}}{Theorem~\ref{thm:convergence-Lipschitz}}}

We divide this proof into two parts.

\textbf{Part 1.} In the first part, we show that the sequence generated by TOS satisfies 
\begin{equation}
\label{eqn:nonsmooth-convergence-part1}
\ip{u_t}{x_t - x_\star} + g(z_t) - g (x_\star) + h (x_t) - h (x_\star) \leq \frac{1}{2\gamma} \norm{y_t - x_\star}^2 - \frac{1}{2\gamma} \norm{y_{t+1} - x_\star}^2 - \frac{1}{2\gamma} \norm{y_{t+1} - y_t}^2.
\end{equation}

Since $x_t = \prox_{\gamma h} (2z_t - y_t - \gamma u_t)$, by \Cref{lem:prox-theorem}, we have
\begin{align}
\ip{2z_t - y_t - \gamma u_t - x_t}{x_\star - x_t} \leq \gamma h (x_\star) - \gamma h (x_t).
\end{align}
We rearrange this inequality as follows:
\begin{align}
\ip{u_t}{x_t - x_\star} + h (x_t) - & h (x_\star) 
 \leq \frac{1}{\gamma} \ip{2z_t - y_t -x_t}{x_t - x_\star} \notag \\
& = \frac{1}{\gamma} \ip{z_t - y_t }{z_t - x_\star} + \frac{1}{\gamma} \ip{z_t - y_t }{x_t - z_t} +  \frac{1}{\gamma} \ip{z_t - x_t}{x_t - x_\star} \notag \\
& = \frac{1}{\gamma} \ip{z_t - y_t }{z_t - x_\star} + \frac{1}{\gamma} \ip{y_t + x_t - z_t - x_\star}{z_t - x_t} \notag \\
& = \frac{1}{\gamma} \ip{z_t - y_t }{z_t - x_\star} + \frac{1}{\gamma} \ip{y_{t+1} - x_\star}{y_t - y_{t+1}}.
\end{align}
Then, we use \Cref{lem:prox-theorem} once again (for $\gamma g$) and get
\begin{align}
\ip{u_t}{x_t - x_\star} + g(z_t) - g (x_\star) + h (x_t) - h (x_\star) 
& \leq \frac{1}{\gamma} \ip{y_{t+1} - x_\star}{y_t - y_{t+1}} \notag \\
& \leq  \frac{1}{2\gamma} \norm{y_t - x_\star}^2 - \frac{1}{2\gamma} \norm{y_{t+1} - x_\star}^2 - \frac{1}{2\gamma} \norm{y_{t+1} - y_t}^2.
\end{align}
This completes the first part of the proof. 

\textbf{Part 2.} In the second part, we characterize the convergence rate of $f(\bar{z}_t) + g(\bar{z}_t) + h(\bar{x}_t) - \phi_\star$ to $0$ by using  \eqref{eqn:nonsmooth-convergence-part1}. 
Since $f$ is convex, we have
\begin{align}
\ip{u_t}{x_t - x_\star} 
= \ip{u_t}{z_t - x_\star} - \ip{u_t}{z_t - x_t} 
& \geq f(z_t) - f(x_\star) - \frac{1}{2\gamma} \norm{x_t - z_t}^2 - \frac{\gamma}{2} \norm{u_t}^2. \label{eqn:nonsmooth-convergence-part2-a}
\end{align}
By combining \eqref{eqn:nonsmooth-convergence-part1} and \eqref{eqn:nonsmooth-convergence-part2-a}, we obtain
\begin{align}
f(z_t)  + g(z_t) + h (x_t) - \phi_\star \leq \frac{1}{2\gamma} \norm{y_t - x_\star}^2 - \frac{1}{2\gamma} \norm{y_{t+1} - x_\star}^2  + \frac{\gamma}{2} \norm{u_t}^2.
\end{align}
We sum this inequality over $t=0$ to $T$:
\begin{align} 
\sum_{\tau=0}^T \Big( f(z_\tau) + g(z_\tau) + h (x_\tau) - \phi_\star \Big) 
& \leq \frac{1}{2\gamma} \norm{y_0 - x_\star}^2 + \frac{\gamma}{2} \sum_{\tau=0}^T  \norm{u_\tau}^2 
\leq \frac{1}{2\gamma} \norm{y_0 - x_\star}^2 + \frac{\gamma_0}{2} G_f^2 \sqrt{T+1},
\label{eqn:nonsmooth-convergence-part2-b}
\end{align}
where the second inequality holds due to the bounded subgradients assumption.
Finally, we divide both sides by $(T+1)$ and use Jensen's inequality:
\begin{align} 
f(\bar{z}_t) + g(\bar{z}_t) + h (\bar{x}_t) - \phi_\star 
\leq \frac{1}{2\sqrt{T+1}} \left(\frac{1}{\gamma_0} \norm{y_0 - x_\star}^2 + \gamma_0 G_f^2 \right). \label{eqn:nonsmooth-convergence-part2-e}
\end{align}

\subsection{Proof of \texorpdfstring{\Cref{thm:stochastic}}{Theorem~\ref{thm:stochastic}}}

The proof is similar to the proof of \Cref{thm:convergence-Lipschitz}. We will only discuss the different steps. 
Part~1 of the proof is the same, \textit{i.e.,} \eqref{eqn:nonsmooth-convergence-part1} is still valid. 

We need to consider the randomness of the gradient estimator in the second part. To this end, we modify \eqref{eqn:nonsmooth-convergence-part2-a} as:
\begin{align}
\mathbb{E} [\ip{u_t}{x_t - x_\star}]
& = \mathbb{E}[\ip{\hat{u}_t}{z_t - x_\star}] + \mathbb{E}[\ip{u_t - \hat{u}_t}{z_t - x_\star}] - \mathbb{E}[\ip{u_t}{z_t - x_t}] \notag \\
& \geq \mathbb{E}[f(z_t) - f(x_\star)] - \mathbb{E}[\ip{u_t}{z_t - x_t}] \notag \\
& \geq \mathbb{E}[f(z_t) - f(x_\star)] - \frac{1}{2\gamma}\mathbb{E}[\norm{z_t - x_t}^2] - \frac{\gamma}{2} \mathbb{E}[\norm{u_t}^2] \notag \\
& \geq \mathbb{E}[f(z_t) - f(x_\star)] - \frac{1}{2\gamma}\mathbb{E}[\norm{z_t - x_t}^2] - \frac{\gamma}{2} (G_f^2 + \sigma^2),
\label{eqn:nonsmooth-convergence-part2-a2}
\end{align}
where the last line holds since 
\begin{align}
\mathbb{E}[\norm{u_t}^2] 
& = \mathbb{E}[\norm{u_t - \hat{u}_t + \hat{u}_t}^2] \\
& = \mathbb{E}[\norm{u_t - \hat{u}_t}^2] + \mathbb{E}[\norm{\hat{u}_t}^2] + 2\mathbb{E}[\ip{u_t - \hat{u}_t}{\hat{u}_t}] 
\leq \sigma^2 + G_f^2.
\end{align}

Now, we take the expectation of \eqref{eqn:nonsmooth-convergence-part1} and substitute \eqref{eqn:nonsmooth-convergence-part2-a2} into it:
\begin{equation}
\mathbb{E}[f(z_t) + g(z_t) + h(x_t)] - \phi_\star \leq \frac{1}{2\gamma} \mathbb{E}[\norm{y_t - x_\star}^2] - \frac{1}{2\gamma} \mathbb{E}[\norm{y_{t+1} - x_\star}^2] + \frac{\gamma}{2} (\sigma^2 + G_f^2).
\end{equation}
We sum this inequality from $t=0$ to $T$ and divide both sides by $T+1$. Then, we use Jensen's inequality and get
\begin{align} 
\mathbb{E} [f(\bar{z}_T) + g(\bar{z}_T) + h (\bar{x}_T)] - \phi_\star 
\leq \frac{1}{2\sqrt{T+1}} \left(\frac{1}{\gamma_0} \norm{y_0 - x_\star}^2 + \gamma_0 (\sigma^2 + G_f^2)  \right).
\end{align}

\subsection{TOS for the Smooth and Stochastic Setting (\texorpdfstring{\Cref{rem:smooth-stochastic}}{Remark~\ref{rem:smooth-stochastic}})}
\label{sec:stochastic-smooth-TOS}

\begin{theorem}
Consider \Probsto and suppose $f$ is $L_f$-smooth on $\dom(g)$. Employ TOS (\Cref{alg:three-operator-splitting}) with a fixed step-size $\gamma = \gamma_0 / \sqrt{T+1}$ for some $\gamma_0 \in [0,\frac{1}{2L_f}]$. Suppose we are receiving the update directions $u_t$ from an unbiased stochastic first-order oracle with bounded variance such that
\begin{align}
    \mathbb{E}[u_t | z_t] = \nabla f(z_t) \quad \text{and} \quad \mathbb{E}[\norm{u_t - \nabla f(z_t)}^2] \leq \sigma^2  ~~ \text{for some $\sigma < +\infty$.}
\end{align} 
Then, the following guarantees hold: 
\begin{align} 
\mathbb{E} [f(\bar{x}_T) + g(\bar{z}_T) + h (\bar{x}_T)] - \phi_\star 
\leq \frac{1}{\sqrt{T+1}} \left(\frac{D^2}{2\gamma_0} + \gamma_0 \sigma^2  \right), ~~~ \text{where} ~~~ D = \norm{y_0 - x_\star}. 
\end{align}
\end{theorem}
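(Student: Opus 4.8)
The plan is to adapt the proof of \Cref{thm:stochastic}: the only changes are that $L_f$-smoothness of $f$ lets us account for the $f$-term at $x_t$ rather than at $z_t$, and that the crude $\tfrac{\gamma}{2}(\sigma^2+G_f^2)$ estimate is replaced by a sharper Young-type bound whose free parameter is coupled to the horizon $T$. Part~1 of the proof of \Cref{thm:convergence-Lipschitz}, that is, inequality~\eqref{eqn:nonsmooth-convergence-part1}, uses only the prox-optimality conditions for $z_t,x_t$ and convexity of $g,h$, so it still holds verbatim; hence it suffices to lower-bound $\mathbb{E}[\ip{u_t}{x_t - x_\star}]$ by something containing $f(x_t) - f(x_\star)$.

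To that end I would split $\ip{u_t}{x_t-x_\star} = \ip{u_t}{z_t-x_\star} + \ip{u_t}{x_t - z_t}$. For the first summand, conditioning on $z_t$ and using $\mathbb{E}[u_t\mid z_t] = \nabla f(z_t)$ together with convexity of $f$ gives $\mathbb{E}[\ip{u_t}{z_t-x_\star}\mid z_t] = \ip{\nabla f(z_t)}{z_t - x_\star} \geq f(z_t) - f(x_\star)$. For the second summand I would split once more into $\ip{\nabla f(z_t)}{x_t - z_t} + \ip{u_t - \nabla f(z_t)}{x_t - z_t}$: the descent lemma bounds the first part below by $f(x_t) - f(z_t) - \tfrac{L_f}{2}\norm{x_t - z_t}^2$, while Young's inequality with parameter $c>0$ bounds the second part below by $-\tfrac{c}{2}\norm{x_t - z_t}^2 - \tfrac{1}{2c}\norm{u_t - \nabla f(z_t)}^2$, whose expectation is at least $-\tfrac{c}{2}\mathbb{E}[\norm{x_t - z_t}^2] - \tfrac{\sigma^2}{2c}$ by the variance assumption. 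Summing the three pieces, the $f(z_t)$ terms cancel, leaving
\begin{equation*}
\mathbb{E}[\ip{u_t}{x_t - x_\star}] \geq \mathbb{E}[f(x_t) - f(x_\star)] - \Big(\tfrac{L_f}{2} + \tfrac{c}{2}\Big)\mathbb{E}[\norm{x_t - z_t}^2] - \tfrac{\sigma^2}{2c}.
\end{equation*}

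Next I would take the expectation of \eqref{eqn:nonsmooth-convergence-part1}, plug in the bound above, and use $y_{t+1} - y_t = x_t - z_t$ so that the $-\tfrac{1}{2\gamma}\norm{y_{t+1}-y_t}^2$ term becomes $-\tfrac{1}{2\gamma}\norm{x_t - z_t}^2$. This gives
\begin{equation*}
\mathbb{E}[f(x_t) + g(z_t) + h(x_t)] - \phi_\star \leq \tfrac{1}{2\gamma}\mathbb{E}[\norm{y_t - x_\star}^2] - \tfrac{1}{2\gamma}\mathbb{E}[\norm{y_{t+1} - x_\star}^2] + \Big(\tfrac{L_f}{2} + \tfrac{c}{2} - \tfrac{1}{2\gamma}\Big)\mathbb{E}[\norm{x_t - z_t}^2] + \tfrac{\sigma^2}{2c}.
\end{equation*}
Now I would choose $c = \tfrac{\sqrt{T+1}}{2\gamma_0}$; then $\tfrac{\sigma^2}{2c} = \tfrac{\gamma_0\sigma^2}{\sqrt{T+1}}$, and since $\tfrac{1}{2\gamma} = \tfrac{\sqrt{T+1}}{2\gamma_0}$ the $\norm{x_t - z_t}^2$ coefficient equals $\tfrac{L_f}{2} - \tfrac{\sqrt{T+1}}{4\gamma_0}$, which is $\leq 0$ because $\gamma_0 \leq \tfrac{1}{2L_f}$ and $\sqrt{T+1}\geq 1$; so that term may be dropped. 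Summing over $t = 0,\dots,T$ telescopes the distance terms into $\tfrac{1}{2\gamma}\norm{y_0 - x_\star}^2 = \tfrac{\sqrt{T+1}}{2\gamma_0}D^2$, dividing by $T+1$, and invoking Jensen's inequality for the convex functions $f,g,h$ then yields $\mathbb{E}[f(\bar x_T) + g(\bar z_T) + h(\bar x_T)] - \phi_\star \leq \tfrac{1}{\sqrt{T+1}}\big(\tfrac{D^2}{2\gamma_0} + \gamma_0\sigma^2\big)$.

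The main obstacle is the noise inner product $\ip{u_t - \nabla f(z_t)}{x_t - z_t}$: because $x_t$ is computed from $u_t$ this term is not mean-zero, so it cannot be eliminated by unbiasedness. The fix is to absorb it via Young's inequality against the ``free'' negative term $-\tfrac{1}{2\gamma}\norm{x_t - z_t}^2$ produced in Part~1, and the need to pick the Young parameter $c$ large enough (scaling like $\sqrt{T+1}$) so the residual noise contribution is $\mathcal{O}(1/\sqrt{T})$ rather than $\mathcal{O}(1)$ is precisely what forces the step-size restriction $\gamma_0 \leq \tfrac{1}{2L_f}$.
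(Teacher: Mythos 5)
Your proposal is correct and follows essentially the same route as the paper's own proof: both start from inequality~\eqref{eqn:nonsmooth-convergence-part1}, split $\ip{u_t}{x_t-x_\star}$ into $\ip{u_t}{z_t-x_\star}+\ip{u_t}{x_t-z_t}$, use unbiasedness plus convexity on the first piece, the descent lemma plus a Young bound on the second, and absorb the resulting $\norm{x_t-z_t}^2$ terms into $-\tfrac{1}{2\gamma}\norm{y_{t+1}-y_t}^2$ under the restriction $\gamma_0\le\tfrac{1}{2L_f}$. Your free Young parameter $c=\tfrac{\sqrt{T+1}}{2\gamma_0}=\tfrac{1}{2\gamma}$ is exactly the choice hard-coded in the paper's estimate $-\tfrac{1}{4\gamma}\norm{z_t-x_t}^2-\gamma\norm{\nabla f(z_t)-u_t}^2$, so the two arguments coincide term for term.
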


\begin{proof}
The proof is similar to \Cref{thm:convergence-Lipschitz}.  \eqref{eqn:nonsmooth-convergence-part1} still holds. 
We modify \eqref{eqn:nonsmooth-convergence-part2-a} as follows (similar to \eqref{eqn:nonsmooth-convergence-part2-a2}):
\begin{align}
\mathbb{E} [\ip{u_t}{x_t - x_\star}]
& \geq \mathbb{E}[f(z_t) - f(x_\star)] + \mathbb{E}[\ip{\nabla f(z_t) - u_t}{z_t - x_t}] - \mathbb{E}[\ip{\nabla f(z_t)}{z_t - x_t}] \notag \\
& \geq \mathbb{E}[f(x_t) - f(x_\star)] - \frac{1}{4\gamma} \mathbb{E}[\norm{z_t - x_t}^2] - \gamma \mathbb{E}[\norm{\nabla f(z_t) - u_t}^2] - \frac{L_f}{2} \norm{x_t - z_t}^2 \notag \\
& \geq \mathbb{E}[f(x_t) - f(x_\star)] - \frac{1+2\gamma L_f}{4\gamma} \mathbb{E}[\norm{y_{t+1} - y_t}^2] - \gamma \sigma^2 . \label{eqn:smooth-convergence-part2-a2}
\end{align}
We take the expectation of \eqref{eqn:nonsmooth-convergence-part1} and replace \eqref{eqn:smooth-convergence-part2-a2} into it
\begin{align}
\mathbb{E}[f(x_t) + g(z_t) + h(x_t)] - \phi_\star 
& \leq \frac{1}{2\gamma} \norm{y_t - x_\star}^2 - \frac{1}{2\gamma} \norm{y_{t+1} - x_\star}^2 + \frac{2\gamma L_f - 1}{4\gamma} \mathbb{E}[\norm{y_{t+1} - y_t}^2] + \gamma \sigma^2 \notag \\
& \leq \frac{1}{2\gamma} \norm{y_t - x_\star}^2 - \frac{1}{2\gamma} \norm{y_{t+1} - x_\star}^2 + \gamma \sigma^2, \label{eqn:tos-smooth-stochastic-b}
\end{align}
where the second line holds since we choose $\gamma_0 \in [0,\frac{1}{2L_f}]$. 

We sum \eqref{eqn:tos-smooth-stochastic-b} from $t=0$ to $T$ and divide both sides by $T+1$. We complete the proof by using Jensen's inequality. 
\end{proof}

\section{Convergence Guarantees for \algo}

In this section, we focus on \ProbAdapTos, an important subclass of \Prob where $g$ and $h$ are indicator functions. 
In this setting, TOS performs the following steps iteratively for $t = 0,1, \ldots$:
\begin{align}
z_{t} & = \proj_{\mathcal{G}} (y_t) \label{eqn:adaptos-step-1} \\
x_{t} & = \proj_{\mathcal{H}} (2z_t - y_t - \gamma_t u_t) \label{eqn:adaptos-step-2} \\
y_{t+1} & = y_t - z_t + x_t, \label{eqn:adaptos-step-3}
\end{align}
where $\gamma_t$ at line \eqref{eqn:adaptos-step-2} is chosen according to the adaptive step-size rule \eqref{eqn:adaptos-step-size}, \emph{i.e.},
\begin{align}
    \gamma_t = \frac{\alpha}{\sqrt{\beta+\sum_{\tau=0}^{t-1} \norm{u_\tau}^2}} \quad \text{for some $\alpha, \beta > 0$.}
\end{align}

The following lemmas are useful in the analysis. 

\begin{lemma}[Lemma~A.2 in \citep{levy2017online}] 
\label{lem:Levy2017}
Let $f: \mathbb{R}^n \to \mathbb{R}$ be a $L_f$-smooth function and let $x_\star \in \arg\min_{x \in \mathbb{R}^n} f(x)$. Then,
\begin{align*}
    \norm{\nabla f(x)}^2 \leq 2 L_f \big(f(x) - f(x_\star)\big), \qquad \forall x \in \mathbb{R}^n.
\end{align*}
\end{lemma}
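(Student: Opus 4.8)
The plan is to exploit the standard quadratic upper bound implied by $L_f$-smoothness (the descent lemma) and evaluate it at a single well-chosen point, then invoke global optimality of $x_\star$. Concretely, the first step is to record that $L_f$-smoothness of $f$ (i.e.\ $L_f$-Lipschitz gradient) gives, for all $x, y \in \R^n$,
\begin{align*}
f(y) \le f(x) + \ip{\nabla f(x)}{y - x} + \frac{L_f}{2}\norm{y-x}^2.
\end{align*}
This is obtained by integrating $\frac{d}{ds} f(x + s(y-x))$ over $s \in [0,1]$ and bounding $\ip{\nabla f(x+s(y-x)) - \nabla f(x)}{y-x} \le L_f s \norm{y-x}^2$ via Cauchy--Schwarz and the Lipschitz property.

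The second step is to specialize $y = x - \tfrac{1}{L_f}\nabla f(x)$, the point obtained by one gradient step with step-size $1/L_f$. Substituting and simplifying the two gradient-dependent terms yields
\begin{align*}
f\!\left(x - \tfrac{1}{L_f}\nabla f(x)\right) \le f(x) - \frac{1}{2 L_f}\norm{\nabla f(x)}^2.
\end{align*}
The third and final step uses that $x_\star$ is a global minimizer of $f$, so the left-hand side is at least $f(x_\star)$; rearranging $f(x_\star) \le f(x) - \tfrac{1}{2L_f}\norm{\nabla f(x)}^2$ gives the claimed inequality $\norm{\nabla f(x)}^2 \le 2 L_f\big(f(x) - f(x_\star)\big)$.

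There is no real obstacle here: the only mild point to be careful about is that the argument needs $f$ to be smooth (hence differentiable with Lipschitz gradient) on all of $\R^n$ and $x_\star$ to be a \emph{global} minimizer so that the gradient-step point can be compared to it — both are granted by the hypotheses. If one wished to avoid even invoking the descent lemma as a black box, one could instead minimize the right-hand side quadratic in $y$ directly, which attains its minimum exactly at $y = x - \tfrac{1}{L_f}\nabla f(x)$ with value $f(x) - \tfrac{1}{2L_f}\norm{\nabla f(x)}^2$, giving the same conclusion.
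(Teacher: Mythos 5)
Your proof is correct: the descent lemma specialized at $y = x - \tfrac{1}{L_f}\nabla f(x)$ followed by $f(x_\star) \le f\bigl(x - \tfrac{1}{L_f}\nabla f(x)\bigr)$ is exactly the standard argument for this inequality, and it correctly avoids any convexity assumption. The paper itself gives no proof — it cites the result directly from Lemma~A.2 of \citet{levy2017online} — and your argument coincides with the canonical proof given there.
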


\begin{lemma}[Lemma~9 in \citep{bach2019universal}] %
\label{lem:Bach2019a}
For any non-negative numbers $a_0, \ldots, a_t \in [0, a]$, and $\beta \geq 0$
\begin{align*}
\sum_{i=0}^t \frac{a_i}{\sqrt{\beta + \sum_{j=0}^{i-1} a_j}} 
\leq \frac{2a}{\sqrt{\beta}} + 3\sqrt{a} + 3 \sqrt{\beta + \sum_{i=0}^{t-1}a_i}.
\end{align*}
\end{lemma}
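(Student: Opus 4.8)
The plan is to reduce the whole estimate to two telescoping sums over the partial sums. First I would introduce the shorthand $b_i := \beta + \sum_{j=0}^{i-1} a_j$, so that $b_0 = \beta$, the increments satisfy $b_{i+1} = b_i + a_i$, the sequence $(b_i)$ is nondecreasing with $b_i \ge \beta$ for every $i$, and the last term on the right-hand side is exactly $3\sqrt{b_t}$. I may assume $\beta > 0$: if $\beta = 0$ and $a > 0$ the right-hand side is $+\infty$ and the claim is vacuous, while if $a = 0$ every $a_i$ vanishes and both sides are zero. Under $\beta>0$ all denominators $\sqrt{b_i}\ge\sqrt{\beta}$ are strictly positive, so every expression below is well defined.

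The key step is the algebraic splitting of each summand into a piece whose denominator already ``includes'' $a_i$ plus a correction,
\[
\frac{a_i}{\sqrt{b_i}} = \frac{a_i}{\sqrt{b_{i+1}}} + a_i\Big(\frac{1}{\sqrt{b_i}} - \frac{1}{\sqrt{b_{i+1}}}\Big),
\]
after which both resulting sums telescope. For the first sum I would use $\frac{a_i}{\sqrt{b_{i+1}}} = \frac{b_{i+1}-b_i}{\sqrt{b_{i+1}}} \le \frac{2(b_{i+1}-b_i)}{\sqrt{b_{i+1}}+\sqrt{b_i}} = 2(\sqrt{b_{i+1}} - \sqrt{b_i})$, where the middle inequality is just $\sqrt{b_i}\le\sqrt{b_{i+1}}$; summing telescopes to $\sum_{i=0}^t \frac{a_i}{\sqrt{b_{i+1}}} \le 2(\sqrt{b_{t+1}} - \sqrt{b_0}) \le 2\sqrt{b_{t+1}}$. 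For the correction term each summand is nonnegative because $b_{i+1}\ge b_i$, so bounding $a_i \le a$ and telescoping gives $\sum_{i=0}^t a_i\big(\tfrac{1}{\sqrt{b_i}} - \tfrac{1}{\sqrt{b_{i+1}}}\big) \le a\big(\tfrac{1}{\sqrt{b_0}} - \tfrac{1}{\sqrt{b_{t+1}}}\big) \le \tfrac{a}{\sqrt{\beta}}$.

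Finally I would reconcile the off-by-one between $\sqrt{b_{t+1}}$ and the target's $\sqrt{b_t}$ by subadditivity of the square root: $\sqrt{b_{t+1}} = \sqrt{b_t + a_t} \le \sqrt{b_t} + \sqrt{a_t} \le \sqrt{b_t} + \sqrt{a}$. Collecting the two bounds yields $\sum_{i=0}^t \frac{a_i}{\sqrt{b_i}} \le 2\sqrt{b_t} + 2\sqrt{a} + \tfrac{a}{\sqrt{\beta}}$, and since every coefficient here is no larger than the corresponding coefficient in the claimed bound, this is in fact stronger than and hence implies the stated estimate $\tfrac{2a}{\sqrt{\beta}} + 3\sqrt{a} + 3\sqrt{b_t}$. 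I do not anticipate a genuine obstacle: the entire argument hinges on spotting the right splitting, and the only points needing care are the well-definedness at $\beta = 0$ (dispatched above) and this harmless off-by-one, closed by the subadditivity step.
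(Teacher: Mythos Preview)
Your argument is correct and in fact yields the sharper bound $2\sqrt{b_t}+2\sqrt{a}+a/\sqrt{\beta}$, which implies the stated inequality. Note that the paper does not prove this lemma itself but merely quotes it from \citep{bach2019universal}, so there is no in-paper proof to compare against; your telescoping approach via the splitting $\tfrac{a_i}{\sqrt{b_i}}=\tfrac{a_i}{\sqrt{b_{i+1}}}+a_i(\tfrac{1}{\sqrt{b_i}}-\tfrac{1}{\sqrt{b_{i+1}}})$ is essentially the standard route for such estimates and matches the spirit of the original proof in that reference.
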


\begin{lemma}[Lemma~10 in \citep{bach2019universal}] %
\label{lem:Bach2019b}
For any non-negative numbers $a_0, \ldots, a_t \in [0, a]$, and $\beta \geq 0$
\begin{align*}
\sum_{i=0}^t \frac{a_i}{\beta + \sum_{j=0}^{i-1} a_j} 
\leq 2 + \frac{4a}{\beta} + 2\log\left(1 + \sum_{i=0}^{t-1} \frac{a_i}{\beta} \right).
\end{align*}
\end{lemma}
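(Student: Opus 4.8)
The plan is to prove the inequality by a telescoping decomposition of each summand, reducing the whole sum to a logarithmic telescope plus a partial-fraction remainder. Throughout I write $S_k := \sum_{j=0}^{k} a_j$ with the convention $S_{-1} = 0$, so that the $i$-th summand is $a_i/(\beta + S_{i-1})$. I may assume $\beta > 0$, since if $\beta = 0$ the $i=0$ term is $a_0/0$ and the right-hand side is $+\infty$, making the bound vacuous; moreover $\beta>0$ guarantees every denominator is strictly positive.

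The key algebraic step is the identity
\[
\frac{a_i}{\beta + S_{i-1}} = \frac{a_i}{\beta + S_i} + \frac{a_i^2}{(\beta + S_{i-1})(\beta + S_i)},
\]
which follows from $a_i = (\beta + S_i) - (\beta + S_{i-1})$. I would then bound the two resulting sums separately. For the first, using the elementary inequality $\log(1+u) \ge u/(1+u)$ for $u \ge 0$ with $u = a_i/(\beta+S_{i-1})$ gives $a_i/(\beta+S_i) \le \log\big((\beta+S_i)/(\beta+S_{i-1})\big)$, so that $\sum_{i=0}^t a_i/(\beta+S_i)$ telescopes to $\log\big((\beta+S_t)/\beta\big) = \log(1 + S_t/\beta)$. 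For the second, since $a_i \le a$ I would bound $a_i^2/\big((\beta+S_{i-1})(\beta+S_i)\big) \le a\big(1/(\beta+S_{i-1}) - 1/(\beta+S_i)\big)$, whose sum telescopes to at most $a/\beta$.

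Combining the two bounds yields $\sum_{i=0}^t a_i/(\beta+S_{i-1}) \le a/\beta + \log(1+S_t/\beta)$, which is in fact already stronger than the claimed estimate. To present it in the stated form with $S_{t-1}$ inside the logarithm, I would note $\log(1+S_t/\beta) - \log(1+S_{t-1}/\beta) = \log\big(1 + a_t/(\beta+S_{t-1})\big) \le a_t/(\beta+S_{t-1}) \le a/\beta$, giving $\sum_{i=0}^t a_i/(\beta+S_{i-1}) \le 2a/\beta + \log(1+S_{t-1}/\beta)$; bounding this by $2 + 4a/\beta + 2\log(1+S_{t-1}/\beta)$, with every dropped term being non-negative, completes the proof.

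I expect the only real subtlety to be choosing the decomposition so that both resulting sums telescope cleanly, together with invoking the scalar inequality $\log(1+u)\ge u/(1+u)$; once these are in place the remaining steps are routine. The relaxation from $S_t$ to $S_{t-1}$ in the logarithm costs only an additive $a/\beta$, which is comfortably absorbed by the slack between my tighter bound and the target constants.
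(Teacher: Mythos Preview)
Your proof is correct. The paper does not actually supply its own proof of this lemma: it is quoted verbatim as Lemma~10 of \citep{bach2019universal} and used as a black box in Corollary~\ref{cor:adaptive-bounds}, so there is no in-paper argument to compare against. Your telescoping decomposition via the identity $a_i/(\beta+S_{i-1}) = a_i/(\beta+S_i) + a_i^2/\big((\beta+S_{i-1})(\beta+S_i)\big)$, followed by the logarithmic telescope from $\log(1+u)\ge u/(1+u)$ and the partial-fraction telescope using $a_i\le a$, is clean and in fact yields the sharper bound $a/\beta + \log(1+S_t/\beta)$ (or $2a/\beta + \log(1+S_{t-1}/\beta)$ after shifting the index in the logarithm), which the stated inequality then follows from with room to spare.
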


\begin{corollary}\label{cor:adaptive-bounds}
Suppose $\|u_t\| \leq G$ for all $t$. Then, the following relations hold for \algo:
\begin{align*}
(i). ~~ & \sum_{\tau=0}^t \gamma_\tau \norm{u_\tau}^2 
= \alpha \sum_{\tau=0}^t \frac{\norm{u_\tau}^2}{\sqrt{\beta + \sum_{j=0}^{\tau-1} \norm{u_j}^2}} 
\leq \alpha \left(\frac{2G^2}{\sqrt{\beta}} + 3G + 3\sqrt{\beta + G^2t}\right) \\
(ii). ~~ & \sum_{\tau=0}^t \gamma_\tau^2\norm{u_\tau}^2 
= \alpha^2 \sum_{\tau=0}^t \frac{\norm{u_\tau}^2}{\beta + \sum_{j=0}^{\tau-1} \norm{u_j}^2}
\leq \alpha^2 \left( 2 + \frac{4G^2}{\beta} + 2\log\Big(1 + \frac{G^2}{\beta}t \Big) \right) \\
(iii). ~~ & \sum_{\tau=0}^t \gamma_\tau\norm{u_\tau}
= \sum_{\tau=0}^t \sqrt{\gamma_\tau^2\norm{u_\tau}^2} 
\leq \sqrt{(t+1)  \textstyle \sum_{\tau=0}^t\gamma_\tau^2\norm{u_\tau}^2} 
\leq \alpha \sqrt{t+1} \sqrt{2 + \frac{4G^2}{\beta} + 2\log\Big(1 + \frac{G^2}{\beta}t \Big)}
\end{align*}
\end{corollary}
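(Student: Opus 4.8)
All three estimates are routine consequences of the two summation lemmas of \citet{bach2019universal} quoted above as \Cref{lem:Bach2019a} and \Cref{lem:Bach2019b}, specialized to the sequence $a_i = \norm{u_i}^2$. Since $\norm{u_i}\le G$ by hypothesis, this sequence lies in $[0,G^2]$, so both lemmas apply with $a = G^2$. Note also that the first displayed equality in each of (i)--(iii) is nothing but the definition \eqref{eqn:adaptos-step-size} of $\gamma_\tau$, with the constant $\alpha$ (or $\alpha^2$) pulled out of the sum; hence the real content is the subsequent inequality in each line.

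For (i), I would substitute $a_i = \norm{u_i}^2$ into \Cref{lem:Bach2019a}, which yields
\begin{align*}
\sum_{\tau=0}^t \frac{\norm{u_\tau}^2}{\sqrt{\beta + \sum_{j=0}^{\tau-1}\norm{u_j}^2}}
\leq \frac{2G^2}{\sqrt{\beta}} + 3\sqrt{G^2} + 3\sqrt{\beta + \sum_{\tau=0}^{t-1}\norm{u_\tau}^2},
\end{align*}
and then simplify $\sqrt{G^2}=G$ and bound $\sum_{\tau=0}^{t-1}\norm{u_\tau}^2 \le G^2 t$ inside the (increasing) square root before multiplying by $\alpha$. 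For (ii), the same substitution into \Cref{lem:Bach2019b} gives $2 + \frac{4G^2}{\beta} + 2\log(1 + \sum_{\tau=0}^{t-1}\norm{u_\tau}^2/\beta)$, and I would again replace the partial sum by its upper bound $G^2 t$ inside the (increasing) logarithm and multiply by $\alpha^2$. For (iii), I would first apply the Cauchy--Schwarz inequality in the form $\sum_{\tau=0}^t \gamma_\tau\norm{u_\tau} = \sum_{\tau=0}^t 1\cdot\gamma_\tau\norm{u_\tau} \le \sqrt{t+1}\,\sqrt{\sum_{\tau=0}^t \gamma_\tau^2\norm{u_\tau}^2}$, and then plug in the bound established in (ii) and take the square root.

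There is no genuine obstacle here; the only points requiring a little care are (a) matching the index ranges of \Cref{lem:Bach2019a}/\Cref{lem:Bach2019b} with those in the step-size definition --- the inner sum runs up to $\tau-1$ and the lemma's upper bound involves $\sum_{i=0}^{t-1}a_i$, which is exactly consistent with the claimed right-hand sides --- and (b) justifying the replacement of $\sum_{\tau=0}^{t-1}\norm{u_\tau}^2$ by the crude bound $G^2 t$, which is legitimate because both $x\mapsto\sqrt{\beta+x}$ and $x\mapsto\log(1+x/\beta)$ are monotone increasing. Everything else is bookkeeping.
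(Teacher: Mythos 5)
Your proposal is correct and coincides with the paper's intended derivation: the corollary is stated without an explicit proof precisely because it follows immediately from \Cref{lem:Bach2019a} and \Cref{lem:Bach2019b} with $a_i=\norm{u_i}^2$ and $a=G^2$, followed by the monotone bound $\sum_{\tau=0}^{t-1}\norm{u_\tau}^2\le G^2t$ and, for (iii), Cauchy--Schwarz combined with (ii). Nothing further is needed.
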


\subsection{Proof of \texorpdfstring{\Cref{thm:adaptos-nonsmooth}}{Theorem~\ref{thm:adaptos-nonsmooth}}}

First, we will bound the growth rate of $\norm{y_{t+1} - x_\star}$. We decompose $\norm{y_{t+1} - x_\star}^2$ as
\begin{align}
\norm{y_{t+1} - x_\star}^2 
= \norm{y_t-z_t+x_t - x_\star}^2 
= \norm{y_t-z_t}^2 + \norm{x_t - x_\star}^2 + 2 \ip{x_t - x_\star}{y_t-z_t}. \label{eqn:ytys2-decomposition}
\end{align}
Since $z_t = \proj_{\mathcal{G}} (y_t)$ and $x_\star \in \mathcal{G}$, we have
\begin{align} 
\norm{y_t-z_t}^2
= \ip{y_t-z_t}{y_t - x_\star} + \ip{y_t-z_t}{x_\star - z_t}
\leq \ip{y_t-z_t}{y_t - x_\star}. \label{eqn:ytzt2-firm-non-expansivity}
\end{align}
Similarly, since  $x_t = \proj_{\mathcal{H}} (2z_t - y_t - \gamma_t u_t)$ and $x_\star \in \mathcal{H}$, by the firm non-expansivity, we have
\begin{equation} \label{eqn:xt2-firm-non-expansivity}
\norm{x_t - x_\star}^2
\leq \ip{x_t - x_\star}{2z_t - y_t - \gamma_t u_t - x_\star}.
\end{equation}
By combining 
\eqref{eqn:ytys2-decomposition}, \eqref{eqn:ytzt2-firm-non-expansivity} and \eqref{eqn:xt2-firm-non-expansivity}, we get
\begin{align}
\norm{y_{t+1} - x_\star}^2 
& \leq \ip{y_t-z_t +x_t - x_\star}{y_t - x_\star} - \gamma_t\ip{x_t - x_\star}{u_t} \notag \\
& = \ip{y_{t+1} - x_\star}{y_t - x_\star} -  \gamma_t \ip{x_t - x_\star}{u_t} \notag \\
& = \frac{1}{2} \norm{y_{t+1} - x_\star}^2  + \frac{1}{2} \norm{y_t-x_\star}^2 - \frac{1}{2} \norm{y_{t+1} - y_t}^2 - \gamma_t \ip{x_t - x_\star}{  u_t}.  \label{eqn:ytys2-decomposition-2ab}
\end{align}
Now, we rearrange \eqref{eqn:ytys2-decomposition-2ab} as follows:
\begin{align}
\norm{y_{t+1} - x_\star}^2 
& \leq \norm{y_t-x_\star}^2 - \norm{y_{t+1} - y_t}^2 + 2\gamma_t\ip{ u_t}{x_\star - x_t} \notag \\
& = \norm{y_t-x_\star}^2 - \norm{y_{t+1} - y_t}^2 + 2\gamma_t\ip{u_t}{x_\star-z_t} + 2\gamma_t  \ip{u_t}{z_t - x_t } \notag \\
& \leq \norm{y_t-x_\star}^2 + 2\gamma_t\ip{u_t}{x_\star-z_t} + \gamma_t^2\norm{u_t}^2 \notag \\
& \leq \norm{y_t-x_\star}^2 + 2\gamma_t \norm{u_t} \norm{z_t - x_\star} + \gamma_t^2\norm{u_t}^2 \notag \\
& \leq \norm{y_t-x_\star}^2 + 2\gamma_t \norm{u_t} \norm{y_t - x_\star} + \gamma_t^2\norm{u_t}^2 
= (\norm{y_t-x_\star} + \gamma_t \norm{u_t})^2, \label{eqn:adaptos-nonsmooth-a}
\end{align}
where we use non-expansivity of the projection operator in the last line: $\norm{z_t-x_\star} = \norm{\proj_{\mathcal{G}}(y_t)-x_\star} \leq \norm{y_t - x_\star}$. 

Next, we take the square root of both sides and use \Cref{cor:adaptive-bounds} to get
\begin{align}
\norm{y_{t+1} - x_\star} 
& \leq \norm{y_t-x_\star} + \gamma_t \norm{u_t} \notag \\
& \leq \norm{y_0-x_\star} + \sum_{\tau=0}^t \gamma_\tau \norm{u_\tau} \notag \\
& \leq \norm{y_0-x_\star} + \alpha \sqrt{t+1} \sqrt{2 + \tfrac{4G_f^2}{\beta} + 2\log\big(1 + \tfrac{G_f^2}{\beta}t \big)}.
\label{eqn:adaptos-nonsmooth-a-a}
\end{align}
Now, we can derive a bound on the infeasibility as follows:
\begin{multline}
\dist(\bar{z}_t,\mathcal{H}) 
\leq \norm{\bar{x}_t - \bar{z}_t} = \frac{1}{t+1}\norm{\sum_{\tau=0}^t (x_\tau - z_\tau)} = \frac{1}{t+1}\norm{y_{t+1} - y_0}
\leq \frac{1}{t+1} \left(\norm{y_{t+1} - x_\star} + \norm{y_0 - x_\star} \right) \\
\leq \frac{1}{t+1} \bigg(2\norm{y_0-x_\star} +\alpha \sqrt{t+1} \sqrt{2 + \tfrac{4G_f^2}{\beta} + 2\log\big(1 + \tfrac{G_f^2}{\beta}t \big)} \bigg). \label{eqn:adaptos-nonsmooth-a-3}
\end{multline}
Next, we prove convergence in  objective value. 
Define $s_t = \sum_{\tau=0}^t \gamma_t$ and $\tilde{z}_t = \frac{1}{s_t} \sum_{\tau=0}^t \gamma_t z_t$. Since $f$ is convex, by Jensen's inequality, 
\begin{align}
f(\tilde{z}_t) - f_\star 
\leq \frac{1}{s_t}\sum_{\tau=0}^t \gamma_\tau \left(f(z_\tau) - f_\star\right) 
\leq \frac{1}{s_t}\sum_{\tau=0}^t \gamma_\tau \ip{u_t}{z_\tau - x_\star}.\label{eqn:adaptos-nonsmooth-b}
\end{align}
From \eqref{eqn:adaptos-nonsmooth-a}, we have
\begin{align}
\gamma_t\ip{u_t}{z_t - x_\star}
& \leq \frac{1}{2}\norm{y_t-x_\star}^2 - \frac{1}{2}\norm{y_{t+1} - x_\star}^2   + \frac{1}{2}\gamma_t^2\norm{u_t}^2. \label{eqn:adaptos-nonsmooth-c}
\end{align}
If we substitute \eqref{eqn:adaptos-nonsmooth-c} into \eqref{eqn:adaptos-nonsmooth-b}, we obtain 
\begin{align}
f(\tilde{z}_t) - f_\star 
& \leq \frac{1}{2s_t} \bigg(\norm{y_0-x_\star}^2 + \sum_{\tau=0}^t\gamma_\tau^2\norm{u_\tau}^2 \bigg) \notag \\
& \leq \frac{1}{2s_t} \bigg(\norm{y_0-x_\star}^2 + \alpha^2 \left( 2 + \tfrac{4G_f^2}{\beta} + 2\log\big(1 + \tfrac{G_f^2}{\beta}t \big) \right) \bigg) 
\label{eqn:adaptos-nonsmooth-d}
\end{align}
where the second line comes from \Cref{cor:adaptive-bounds}. %
Finally, we note that 
\begin{align}
s_t = \sum_{\tau=0}^t \gamma_\tau
= \sum_{\tau=0}^t \frac{\alpha}{\sqrt{\beta + \sum_{j=0}^{\tau-1} \norm{u_j}^2}} 
\geq \sum_{\tau=0}^t \frac{\alpha}{ \sqrt{\beta + G_f^2 t}} 
= \frac{\alpha (t+1)}{\sqrt{\beta + G_f^2 t}} 
\geq \frac{\alpha (t+1)}{\sqrt{\beta} + G_f \sqrt{t}}. 
\label{eqn:adaptos-nonsmooth-e}
\end{align}
We complete the proof by using \eqref{eqn:adaptos-nonsmooth-e} in \eqref{eqn:adaptos-nonsmooth-d}:
\begin{align}
f(\tilde{z}_t) - f_\star 
\leq  \left(\frac{G_f}{\sqrt{t+1}} + \frac{\sqrt{\beta}}{t+1} \right) \bigg(\frac{1}{2\alpha}\norm{y_0-x_\star}^2 + \alpha \left( 1 + \tfrac{2G_f^2}{\beta} + \log\big(1 + \tfrac{G_f^2}{\beta}t \big) \right) \bigg).
\end{align}

\subsection{Proof of \texorpdfstring{\Cref{thm:adaptos-smooth}}{Theorem~\ref{thm:adaptos-smooth}}}

As in the proof of \Cref{thm:adaptos-nonsmooth}, our first goal is to bound $\norm{y_{t+1} - y_\star}$. We start from \eqref{eqn:adaptos-nonsmooth-a}:
\begin{align}
\norm{y_{t+1} - x_\star}^2 
& \leq \norm{y_t-x_\star}^2 + 2\gamma_t\ip{u_t}{x_\star-z_t} + \gamma_t^2\norm{u_t}^2. \label{eqn:adaptos-smooth-a}
\end{align}
By assumption $f$ is convex and the solution lies in the interior of the feasible set. Hence, $\ip{u_t}{x_\star-z_t} \leq 0$ and 
\begin{align}
\norm{y_{t+1} - x_\star}^2 
& \leq \norm{y_t-x_\star}^2 + \gamma_t^2\norm{u_t}^2 
\leq \norm{y_0-x_\star}^2 + \sum_{\tau=0}^t \gamma_\tau^2\norm{u_\tau}^2 . \label{eqn:adaptive-smooth-boundedness-pre}
\end{align}
By using \Cref{cor:adaptive-bounds}, this leads to
\begin{align}
\norm{y_{t+1} - x_\star}^2 
& \leq \underbrace{\norm{y_0 - x_\star}^2 + \alpha^2 \left( 2 + \tfrac{4G_f^2}{\beta} + 2\log\big(1 + \tfrac{G_f^2}{\beta}t \big) \right)}_{:=D_t^2}.
\label{eqn:adaptive-smooth-boundedness}
\end{align}
We take the square-root of both sides to obtain $\norm{y_{t+1} - x_\star} \leq D_t$. 
This proves that $\norm{y_t - x_\star}$ is bounded by a logarithmic growth. Similar to \eqref{eqn:adaptos-nonsmooth-a-3}, we can use this bound to prove convergence to a feasible point:
\begin{align}
\dist(\bar{z}_t,\mathcal{H}) 
\leq \norm{\bar{x}_t - \bar{z}_t} %
 & \leq \frac{1}{t+1} \Big(\norm{y_{t+1} - x_\star} + \norm{y_0 - x_\star} \Big) 
 \leq \frac{1}{t+1} \Big(D_t + \norm{y_0 - x_\star} \Big) \notag \\
 & \leq \frac{1}{t+1} \bigg(2\norm{y_0 - x_\star} + \alpha \sqrt{ 2 + \tfrac{4G_f^2}{\beta} + 2\log\Big(1 + \tfrac{G_f^2}{\beta}t \Big)} \bigg).
\end{align}
Next, we analyze the objective suboptimality. 
From \eqref{eqn:adaptos-smooth-a}, we have
\begin{align}
\ip{u_t}{z_t - x_\star}
& \leq \frac{1}{2\gamma_t}\norm{y_t-x_\star}^2 - \frac{1}{2\gamma_t} \norm{y_{t+1} - x_\star}^2   + \frac{\gamma_t}{2}\norm{u_t}^2.  \label{eqn:adaptos-smooth-b}
\end{align}
Then, since $f$ is convex, by using Jensen's inequality and \eqref{eqn:adaptos-smooth-b}, we get
\begin{align}
\Phi_t :&= 
\frac{1}{t+1}\sum_{\tau=0}^t \left(f(z_\tau) - f_\star\right) \notag \\
& \leq \frac{1}{t+1} \sum_{\tau=0}^t \ip{u_t}{z_t - x_\star} \notag \\
& \leq \frac{1}{2(t+1)} \bigg( \frac{1}{\gamma_0}\norm{y_0-x_\star}^2 + \underbrace{\sum_{\tau=1}^t \Big( \frac{1}{\gamma_\tau} - \frac{1}{\gamma_{\tau-1}} \Big) \norm{y_\tau - x_\star}^2}_{(*)} + \sum_{\tau=0}^t  \gamma_\tau\norm{u_\tau}^2 \bigg).
\label{eqn:adaptos-smooth-c}
\end{align}
Now, we focus on $(*)$. By using \eqref{eqn:adaptive-smooth-boundedness-pre}, we get
\begin{align}
(*) & \leq \sum_{\tau=1}^t \Big( \frac{1}{\gamma_\tau} - \frac{1}{\gamma_{\tau-1}} \Big) \big( \norm{y_0 - x_\star}^2 + \sum_{j=0}^{\tau-1} \gamma_j^2\norm{u_j}^2 \big) \notag \\
& = \Big( \frac{1}{\gamma_t}-\frac{1}{\gamma_0}\Big) \norm{y_0 - x_\star}^2 + \sum_{\tau=1}^t \frac{1}{\gamma_\tau} \sum_{j=0}^{\tau-1} \gamma_j^2\norm{u_j}^2 -  \sum_{\tau=1}^t \frac{1}{\gamma_{\tau-1}} \sum_{j=0}^{\tau-1} \gamma_j^2\norm{u_j}^2 \notag \\
& = \Big( \frac{1}{\gamma_t}-\frac{1}{\gamma_0}\Big) \norm{y_0 - x_\star}^2 + \sum_{\tau=1}^t \frac{1}{\gamma_\tau} \sum_{j=0}^{\tau-1} \gamma_j^2\norm{u_j}^2 -  \sum_{\tau=0}^{t-1} \frac{1}{\gamma_{\tau}} \sum_{j=0}^{\tau} \gamma_j^2\norm{u_j}^2 \notag \\
& = \Big( \frac{1}{\gamma_t}-\frac{1}{\gamma_0}\Big) \norm{y_0 - x_\star}^2 + \sum_{\tau=1}^t \frac{1}{\gamma_\tau} \sum_{j=0}^{\tau} \gamma_j^2\norm{u_j}^2 - \sum_{\tau=1}^t \gamma_\tau \norm{u_\tau}^2 -\sum_{\tau=0}^{t-1} \frac{1}{\gamma_{\tau}} \sum_{j=0}^{\tau} \gamma_j^2\norm{u_j}^2 \notag \\
& = \Big( \frac{1}{\gamma_t}-\frac{1}{\gamma_0}\Big) \norm{y_0 - x_\star}^2 + \sum_{\tau=0}^t \frac{1}{\gamma_\tau} \sum_{j=0}^{\tau} \gamma_j^2\norm{u_j}^2 - \sum_{\tau=0}^t \gamma_\tau \norm{u_\tau}^2 -\sum_{\tau=0}^{t-1} \frac{1}{\gamma_{\tau}} \sum_{j=0}^{\tau} \gamma_j^2\norm{u_j}^2 \notag \\
& = \Big( \frac{1}{\gamma_t}-\frac{1}{\gamma_0}\Big) \norm{y_0 - x_\star}^2 + \frac{1}{\gamma_t} \sum_{j=0}^{t} \gamma_j^2\norm{u_j}^2 - \sum_{\tau=0}^t \gamma_\tau \norm{u_\tau}^2.
\end{align}
We substitute this back into \eqref{eqn:adaptos-smooth-c} and obtain
\begin{align}
\Phi_t
& \leq \frac{1}{2(t+1)} \frac{1}{\gamma_t} \Big(\norm{y_0-x_\star}^2 + \sum_{j=0}^t \gamma_j^2 \norm{u_j}^2 \Big)
\leq \frac{D_t^2}{2\gamma_t(t+1)}
\end{align}
where $D_t^2$ is defined in \eqref{eqn:adaptive-smooth-boundedness}.

By the definition of $\gamma_t$ we get
\begin{align}
\Phi_t
& \leq \frac{D_t^2}{2\gamma_t(t+1)} 
= \frac{D_t^2}{2\alpha(t+1)} \sqrt{\beta + \sum_{\tau=0}^{t-1}\norm{u_\tau}^2} 
\leq \frac{D_t^2}{2\alpha(t+1)} \sqrt{\beta + \sum_{\tau=0}^{t}\norm{u_\tau}^2}.
\label{eqn:adaptos-smooth-g}
\end{align}
By \Cref{lem:Levy2017}, we have
\begin{align}
\sum_{\tau = 0}^t \|u_\tau\|^2 
\leq 2L_f \sum_{\tau = 0}^t \big(f(z_\tau) - f_\star\big)
= 2L_f (t+1) \Phi_t. 
\end{align}
We place this back into \eqref{eqn:adaptos-smooth-g}, take the square of both sides, and rearrange the inequality as follows:
\begin{align}
\frac{4\alpha^2(t+1)^2}{D_t^4}\Phi_t^2
& \leq 2L_f(t+1)\Phi_t + \beta.
\end{align}
This is a second order inequality of $\Phi_t$. By solving this inequality, we get
\begin{align}
\Phi_t 
& \leq \frac{1}{2(t+1)} \left( \Big(\frac{D_t^2}{\alpha}\Big)^2 L_f + \frac{D_t^2}{\alpha}\sqrt{\beta}\right). 
\end{align}
Finally, we note 
$f(\bar{z}_t) - f_\star \leq \Phi_t$ by Jensen's inequality.  

\subsection{Proof of \texorpdfstring{\Cref{thm:adaptos-stochastic}}{Theorem~\ref{thm:adaptos-stochastic}}}

Once again we start from \eqref{eqn:adaptos-nonsmooth-a} and take the expectation of both sides:
\begin{align}
\mathbb{E}[\norm{y_{t+1} - x_\star}^2]
& \leq \mathbb{E}[\norm{y_t-x_\star}^2] + \mathbb{E}[2\gamma_t\ip{u_t}{x_\star-z_t}] + \mathbb{E}[\gamma_t^2\norm{u_t}^2] \notag \\
& \leq \mathbb{E}[\norm{y_t-x_\star}^2] + \mathbb{E}[2\gamma_t\ip{\hat{u}_t}{x_\star-z_t}] + \mathbb{E}[\gamma_t^2\norm{u_t}^2]. \label{eqn:adaptos-stochastic-feasibility-a}
\end{align}
Since we assume $f$ is convex and the solution lies in the interior of the feasible set, we know $0 \in \partial f(x_\star)$ and $\ip{\hat{u}_t}{x_\star-z_t} \leq 0$. Hence, we have 
\begin{align}
\mathbb{E}[\norm{y_{t+1} - x_\star}^2]
& \leq \mathbb{E}[\norm{y_t-x_\star}^2] + \mathbb{E}[\gamma_t^2\norm{u_t}^2] 
\leq \norm{y_0-x_\star}^2 + \mathbb{E}\big[\sum_{\tau=0}^t \gamma_\tau^2\norm{u_\tau}^2 \big].
\end{align}
By using \Cref{cor:adaptive-bounds}, this leads to
\begin{align}
\mathbb{E}[\norm{y_{t+1} - x_\star}^2] 
& \leq \norm{y_0 - x_\star}^2 + \alpha^2 \left( 2 + \tfrac{4G_f^2}{\beta} + 2\log\big(1 + \tfrac{G_f^2}{\beta}t \big) \right).
\end{align}
We take the square-root of both sides. Note that $\mathbb{E}[\norm{y_{t+1} - x_\star}]^2 \leq \mathbb{E}[\norm{y_{t+1} - x_\star}^2]$, hence
\begin{align}
\mathbb{E}[\norm{y_{t+1} - x_\star}] 
& \leq \norm{y_0 - x_\star} + \alpha \sqrt{ 2 + \tfrac{4G_f^2}{\beta} + 2\log\big(1 + \tfrac{G_f^2}{\beta}t \big) }.
\end{align}
Similar to \eqref{eqn:adaptos-nonsmooth-a-3}, we can use this bound to prove convergence to a feasible point:
\begin{align}
\mathbb{E}[\dist(\bar{z}_t,\mathcal{H})]
 & \leq \frac{1}{t+1} \Big(\mathbb{E}[\norm{y_{t+1} - x_\star}] + \norm{y_0 - x_\star} \Big)  \notag \\
 & \leq \frac{1}{t+1} \Big(2\norm{y_0 - x_\star} + \alpha \sqrt{ 2 + \tfrac{4G_f^2}{\beta} + 2\log\big(1 + \tfrac{G_f^2}{\beta}t \big)} \Big).
\end{align}

Next, we analyze convergence in the function value. Note that
\begin{align}
\gamma_\tau (f(z_\tau) - f_\star)
\leq \gamma_\tau \ip{\hat{u}_t}{z_\tau - x_\star}  
= \gamma_\tau \ip{u_\tau}{z_\tau - x_\star} + \gamma_\tau \ip{\hat{u}_\tau - u_\tau}{z_\tau - x_\star}.
\end{align}
Recall that $\gamma_\tau$ and $u_\tau$ are independent given $z_\tau$. Then, the second term vanishes if we take the expectation of both sides:
\begin{align}
\mathbb{E}[\gamma_\tau (f(z_\tau) - f_\star)] 
= \mathbb{E}[\gamma_\tau \ip{u_\tau}{z_\tau - x_\star}].
\end{align}
Then, by using \eqref{eqn:adaptos-stochastic-feasibility-a}, we get 
\begin{align}
\mathbb{E}[\gamma_\tau (f(z_\tau) - f_\star)] 
\leq \frac{1}{2} \mathbb{E}\big[ \norm{y_\tau - x_\star}^2 - \norm{y_{\tau+1} - x_\star}^2 + \gamma_\tau^2 \norm{u_\tau}^2 \big].
\end{align}
If we sum this inequality over $\tau = 0, 1, \ldots , t$, we get
\begin{align}
\mathbb{E}\Big[\sum_{\tau=0}^t \gamma_\tau (f(z_\tau) - f_\star)\Big] 
\leq \frac{1}{2} \norm{y_0 - x_\star}^2 + \frac{1}{2} \mathbb{E}\Big[\sum_{\tau=0}^t \gamma_\tau^2 \norm{u_\tau}^2 \Big]. 
\label{eqn:adaptops-stochastic-nonsmooth-a}
\end{align}
From \Cref{cor:adaptive-bounds},  
\begin{align}
\mathbb{E}\Big[\sum_{\tau=0}^t \gamma_\tau^2 \norm{u_\tau}^2 \Big]
\leq \alpha^2 \Big(2 + \tfrac{4G_f^2}{\beta} + 2\log\big(1 + \tfrac{G_f^2}{\beta}t \big)\Big).
\end{align}
Replacing this back into \eqref{eqn:adaptops-stochastic-nonsmooth-a}, we get
\begin{align}
\mathbb{E}\Big[\sum_{\tau=0}^t \gamma_\tau (f(z_\tau) - f_\star)\Big] 
\leq \frac{1}{2} \norm{y_0 - x_\star}^2 + \alpha^2 \Big(1 + \tfrac{2G_f^2}{\beta} + \log\big(1 + \tfrac{G_f^2}{\beta}t \big)\Big). 
\label{eqn:adaptops-stochastic-nonsmooth-b}
\end{align}
Let us define $s_t := \sum_{\tau=0}^t \gamma_\tau$. By Jensen's inequality, we get
\begin{align}
\mathbb{E}\Big[ \sum_{\tau=0}^t \gamma_\tau (f(z_\tau) - f_\star)\Big] 
= \mathbb{E}\Big[ \frac{s_t}{s_t} \sum_{\tau=0}^t \gamma_\tau (f(z_\tau) - f_\star)\Big] 
\geq \mathbb{E}\big[ s_t (f(\tilde{z}_t) - f_\star)\big].
\label{eqn:adaptops-stochastic-nonsmooth-c}
\end{align}
Note that 
\begin{align}
s_t = \sum_{\tau=0}^t \gamma_\tau
= \sum_{\tau=0}^t \frac{\alpha}{\sqrt{\beta + \sum_{j=0}^{\tau-1} \norm{u_j}^2}} 
\geq \sum_{\tau=0}^t \frac{\alpha}{ \sqrt{\beta + G_f^2 t}} 
= \frac{\alpha (t+1)}{\sqrt{\beta + G_f^2 t}} .
\label{eqn:adaptops-stochastic-nonsmooth-d}
\end{align}
Then, we have
\begin{align}
\frac{\alpha (t+1)}{\sqrt{\beta + G_f^2 t}} \mathbb{E}\big[(f(\tilde{z}_t) - f_\star)\big]
\leq \mathbb{E}\big[ s_t (f(\tilde{z}_t) - f_\star)\big] 
\leq \frac{1}{2} \norm{y_0 - x_\star}^2 + \alpha^2 \Big(1 + \tfrac{2G_f^2}{\beta} + \log\big(1 + \tfrac{G_f^2}{\beta}t \big)\Big).
\end{align}
By rearranging, we get
\begin{align}
\mathbb{E}\big[(f(\tilde{z}_t) - f_\star)\big]
\leq \Big(\frac{\sqrt{\beta}}{t+1} + \frac{G_f }{\sqrt{t+1}} \Big)  \bigg( \frac{1}{2\alpha} \norm{y_0 - x_\star}^2 + \alpha \Big(1 + \tfrac{2G_f^2}{\beta} + \log\big(1 + \tfrac{G_f^2}{\beta}t \big)\Big) \bigg).
\end{align}

\section{More Details on the Experiments in \texorpdfstring{\Cref{sec:numerical-experiments}}{Section~\ref{sec:numerical-experiments}}}

\subsection{Details for \texorpdfstring{\Cref{sec:experiments-smooth}}{Section~\ref{sec:experiments-smooth}}}

In the implementation of \algo we simply discarded $\beta$ and set $\gamma_0 = \alpha$. \Cref{fig:tuning} demonstrates how the performance of \algo depends on $\alpha$ for the experiments we considered in \Cref{sec:experiments-smooth}. 

For \Cref{fig:smooth}, we choose: \\[0.5em]
$~\quad\triangleright$ $\alpha = 10$ for overlapping group lasso with $\lambda = 10^{-3}$ and synthetic data, \\[0.5em]
$~\quad\triangleright$ $\alpha = 1$ for overlapping group lasso with $\lambda = 10^{-1}$ and synthetic data, \\[0.5em]
$~\quad\triangleright$ $\alpha = 100$ for overlapping group lasso with $\lambda = 10^{-3}$ and real-sim dataset, \\[0.5em]
$~\quad\triangleright$ $\alpha = 100$ for overlapping group lasso with $\lambda = 10^{-2}$ and real-sim dataset, \\[0.5em]
$~\quad\triangleright$ $\alpha = 1$ for sparse and low-rank regularization with $\lambda = 10^{-3}$, \\[0.5em]
$~\quad\triangleright$ $\alpha = 1$ for sparse and low-rank regularization with $\lambda = 10^{-4}$, \\[0.5em]
$~\quad\triangleright$ $\alpha = 100$ for total variation deblurring with $\lambda = 10^{-6}$, \\[0.5em]
$~\quad\triangleright$ $\alpha = 100$ for total variation deblurring with $\lambda = 10^{-4}$. 

In \Cref{sec:experiments-smooth}, we consider problems only with smooth $f$. To present how the performance of \algo changes by $\alpha$ when $f$ is nonsmooth, we also run the overlapping group lasso problem with the hinge loss. In this setting, we used RCV1 dataset \citep{lewis2004rcv1} ($n = 677399$, $N = 20242$) and tried two different values of the regularization parameter $\lambda = 10^{-3}$ and $10^{-2}$. The results are shown in \Cref{fig:og-hinge-loss}. \vspace{0.5em}

\begin{figure}[h]
\begin{center}
\includegraphics[width=\linewidth]{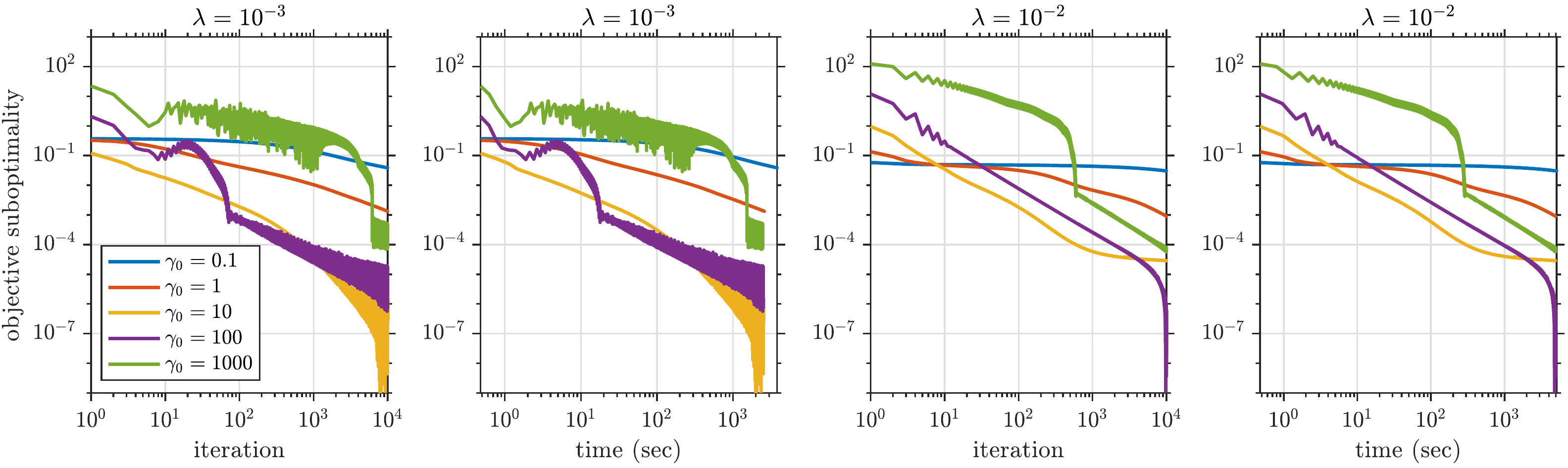} \\[0.5em]
\caption{Empirical performance of \algo for different choices of $\alpha$ for the overlapping group lasso problem with the hinge-loss. In this experiment we use RCV1 dataset \citep{lewis2004rcv1} ($n = 677399$, $N = 20242$) and tried two different values of the regularization parameter $\lambda = 10^{-3}$ and $10^{-2}$.}
\label{fig:og-hinge-loss}
\vspace{1em}
\end{center}
\end{figure}

\begin{figure}[p]
\begin{center}
\includegraphics[width=\linewidth]{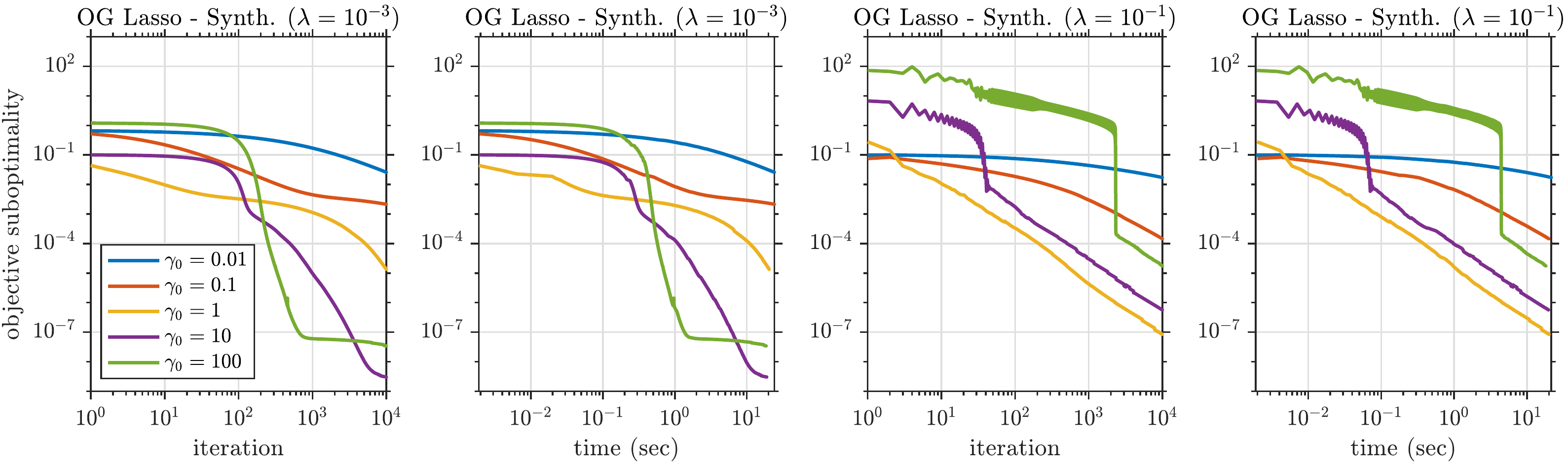} \\[0.5em]
\includegraphics[width=\linewidth]{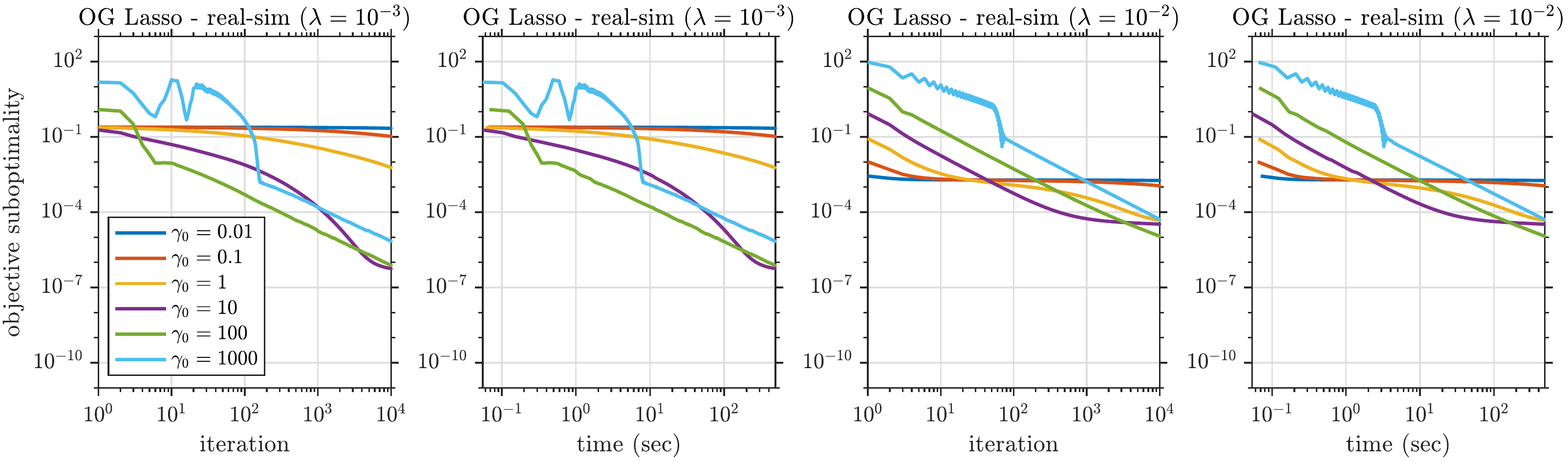} \\[0.5em]
\includegraphics[width=\linewidth]{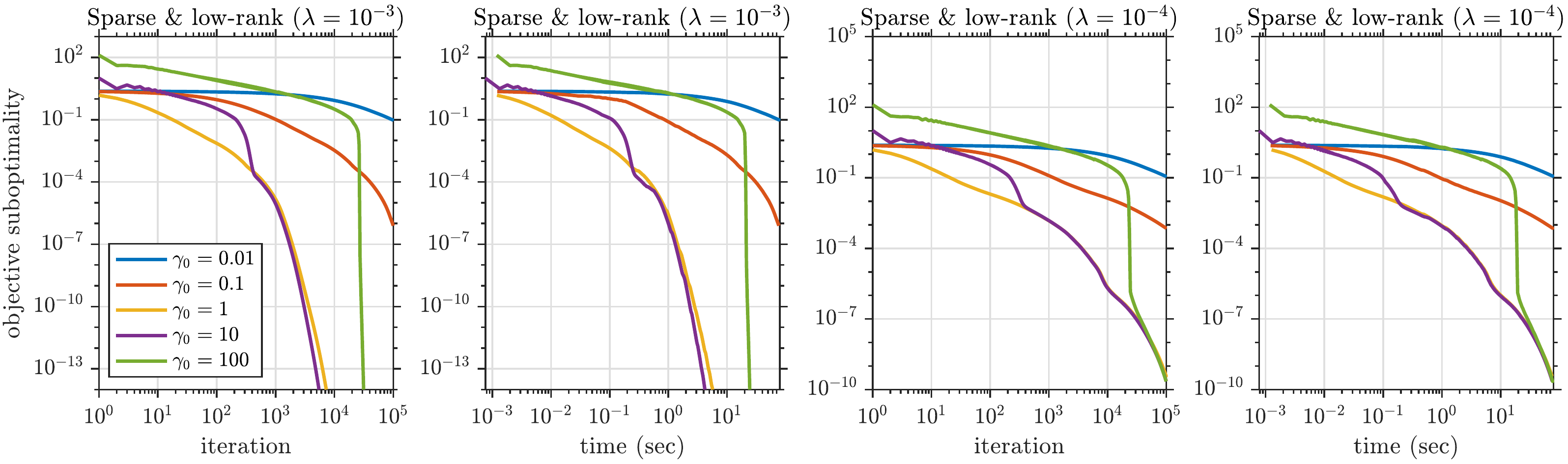} \\[0.5em]
\includegraphics[width=\linewidth]{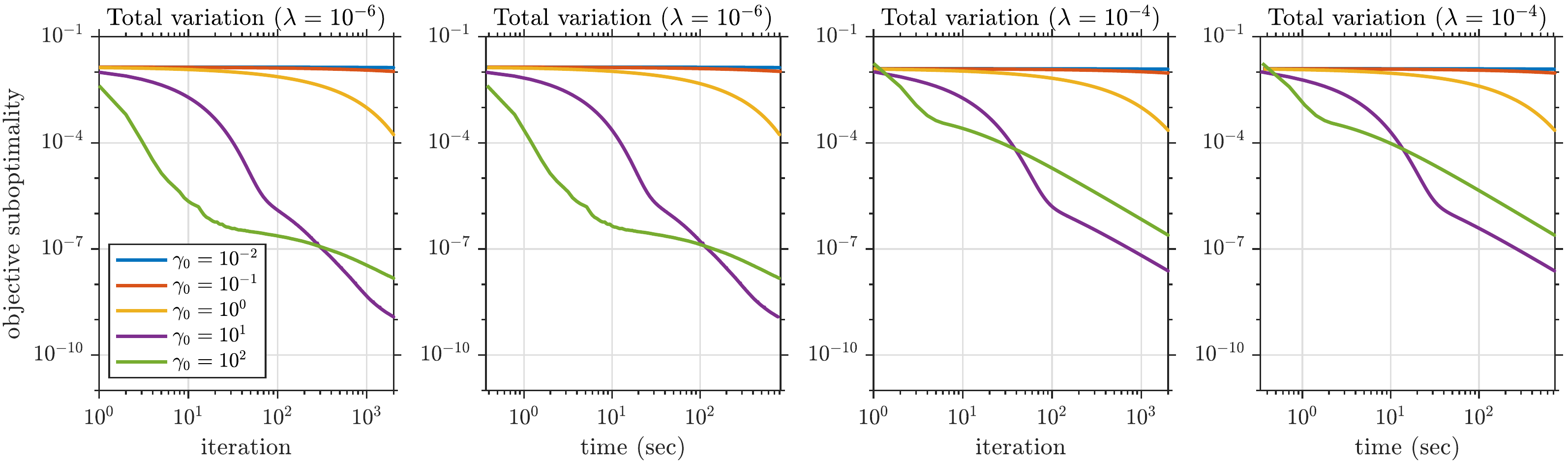} 
\caption{Empirical performance of \algo with different choices of $\alpha$ for the problems with smooth and convex loss function studied in \Cref{sec:experiments-smooth}.}
\label{fig:tuning}
\vspace{1em}
\end{center}
\end{figure}

\subsection{Details for \texorpdfstring{\Cref{sec:experiments-nonsmooth}}{Section~\ref{sec:experiments-nonsmooth}}}

\Cref{fig:recovered-images} shows the recovered approximations with $\ell_1$ and $\ell_2$-loss functions along with the original image and the noisy observation. $\ell_1$-loss is known to be more reliable against outliers, and it empirically generates a better approximation of the original image with 26.21 dB peak signal to noise ratio (PSNR) against 21.15 dB for the $\ell_2$-loss. 

In \Cref{fig:image-recovery-supp} we extend the comparison in \Cref{fig:nonsmooth} with the squared-$\ell_2$ loss. 
\begin{equation}
\min_{\boldsymbol{X} \in \R^{m \times n}} \quad \frac{1}{2}\norm{\mathcal{A}(X) - Y}_2^2 \quad \text{subject to} \quad \norm{X}_\ast \leq \lambda, ~~~ 0\leq X \leq 1,
\end{equation}
Note that the solution set is the same for $\ell_2$ and squared-$\ell_2$ formulations. However, squared-$\ell_2$ loss is smooth whereas $\ell_2$ loss is nonsmooth. Nevertheless, the empirical performance of \algo for the two formulations are similar. We also compare the evaluation of PSNR over the iterations. This comparison clearly demonstrates the advantage of using the robust $\ell_1$ loss formulation. 

\begin{figure}[p]
\begin{center}
\includegraphics[scale=0.5]{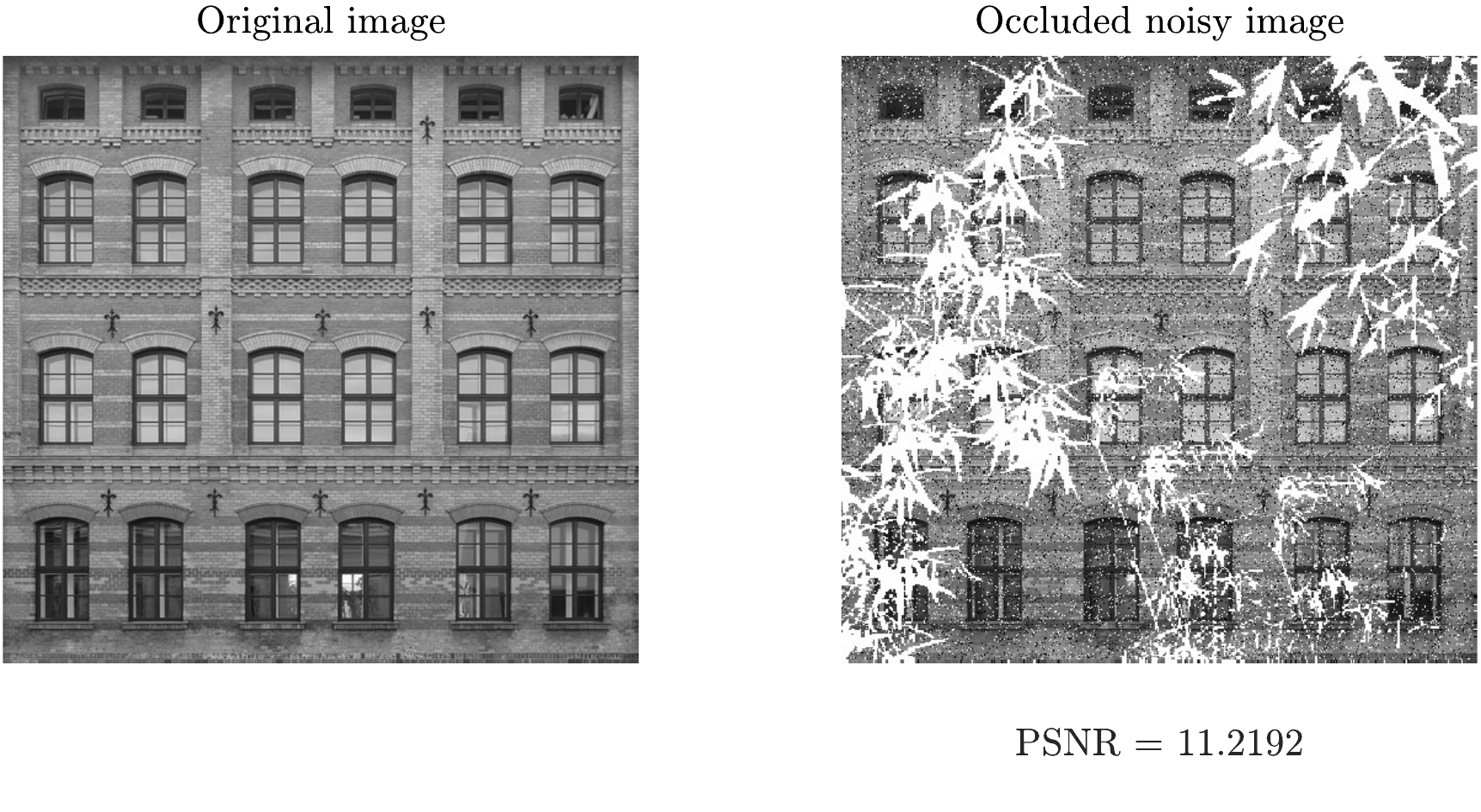}
\includegraphics[scale=0.5]{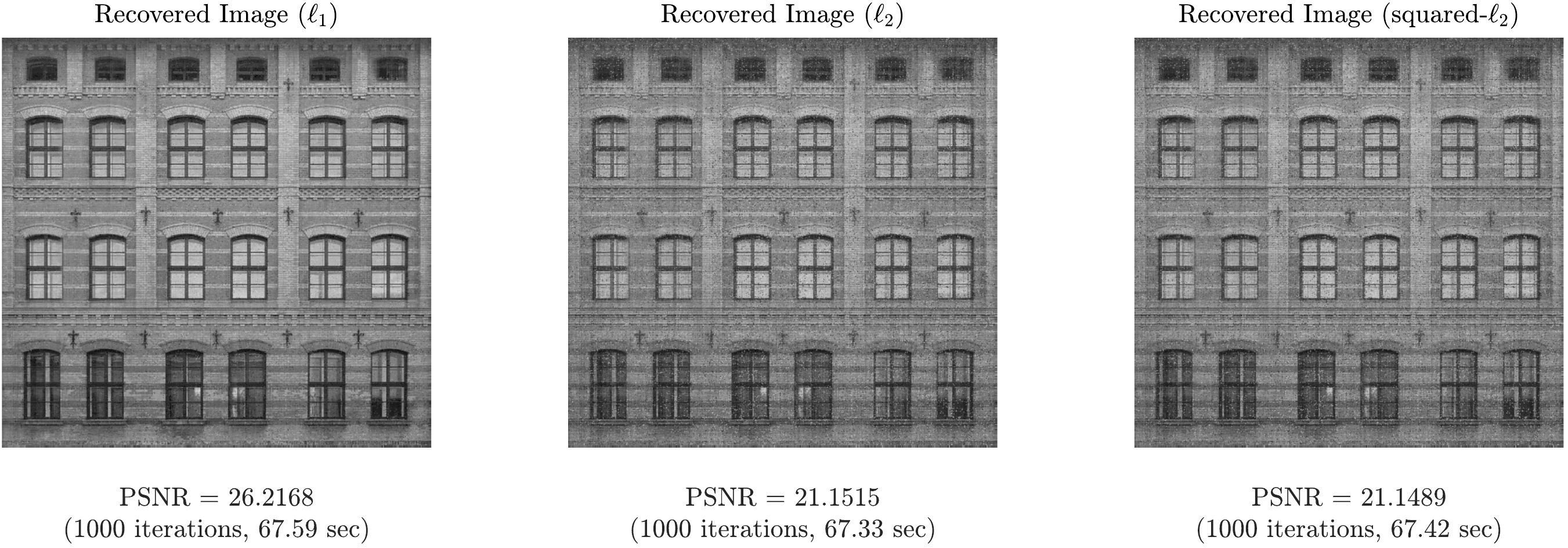}\vspace{-0.5em}
\caption{Comparison or images recovered by minimizing the $\ell_1$, $\ell_2$ and squared-$\ell_2$ loss functions described in \Cref{sec:experiments-nonsmooth}. $\ell_1$-loss empirically gives a better approximation with 5dB higher PSNR.} 
\label{fig:recovered-images}
\vspace{1.5em}
\end{center}

\begin{center}
\includegraphics[width=\linewidth]{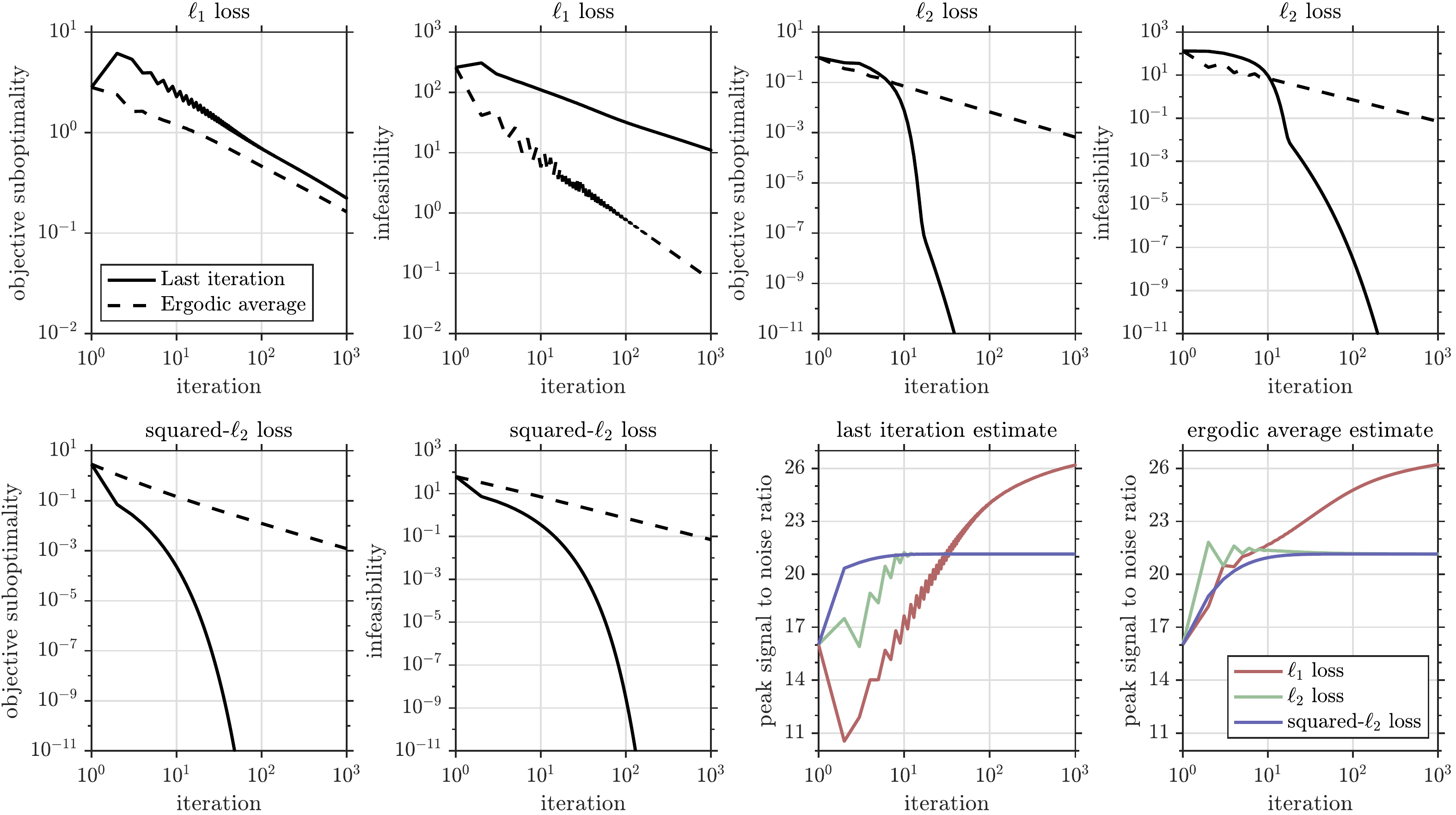}\vspace{-1em}
\caption{Performance of \algo on image impainting problems with $\ell_1$, $\ell_2$, and squared-$\ell_2$ loss functions described in \Cref{sec:experiments-nonsmooth}. The performance for nonsmooth $\ell_2$ loss and smooth squared-$\ell_2$ loss are qualitatively similar.}
\label{fig:image-recovery-supp}
\end{center}
\end{figure}

\subsection{Details for \texorpdfstring{\Cref{sec:experiments-mnist}}{Section~\ref{sec:experiments-mnist}}}

Let $\boldsymbol{y} = f(\boldsymbol{x},\boldsymbol{w})$ denote a generic deep neural network with $H$ hidden layers, which takes a vector input $x$ and returns a vector output $y$, and $w$ represents the column-vector concatenation of all adaptable parameters. $k$th hidden layer operates on input vector $\boldsymbol\theta_k$ and returns $\boldsymbol\theta_{k+1}$, 
\begin{align}
    \boldsymbol\theta_{k+1} & = \phi_k (\boldsymbol{W}_k \boldsymbol\theta_k + \boldsymbol{b}_k), \quad \text{for}~1\leq k \leq H, 
\end{align}
where $\boldsymbol\theta_1 = \boldsymbol{x}$ denotes the input layer by convention, $\{\boldsymbol\theta_k,\boldsymbol{b}_k\}$ are the adaptable parameters of the layer and $\phi_k$ is an activation function to be applied entry-wise. We use ReLu activation \citep{glorot2011deep} for the hidden layers of the network and the softmax activation function for the output layer. We use the same initial weights as in \citep{scardapane2017group}, which is based on the method described in \citep{glorot2010understanding}. 

Given a set of $N$ training examples $\{(\boldsymbol{x}_1,\boldsymbol{y}_1), \ldots, (\boldsymbol{x}_N,\boldsymbol{y}_N)\}$, we train the network by minimizing 
\begin{equation}
\label{eqn:neural-network}
\min_{\boldsymbol{w} \in \R^n} \quad \frac{1}{N} \sum_{i = 1}^N L(\boldsymbol{y}_i, f(\boldsymbol{x}_i,\boldsymbol{w})) + \lambda \norm{\boldsymbol{w}}_1 + \lambda \sum_{\boldsymbol\alpha \in \Omega} \sqrt{|\boldsymbol\alpha|} \norm{\boldsymbol\alpha}_2,
\end{equation}
with the standard cross-entropy loss given by $L(\boldsymbol{y},f(\boldsymbol{x},\boldsymbol{w})) = -\sum_{j=1}^{\mathrm{dim}(\boldsymbol{y})} y_j \log(f_j(\boldsymbol{x}, \boldsymbol{w}))$. $\lambda > 0$ is the regularization parameter. We set $\lambda = 10^{-4}$, which is shown to provide the best results in terms of classification accuracy and sparsity in \citep{scardapane2017group}. 

The first regularizer ($\ell_1$ penalty) in \eqref{eqn:neural-network} promotes sparsity on the overall network, while the second regularizer (Group-Lasso penalty, introduced in \citep{yuan2006model}) is used to achieve group-level sparsity. The goal is to force all outgoing connections from the same neurons to be simultaneously zero, so that we can safely remove them and obtain a compact network. To this end, $\Omega$ contains the sets of all outgoing connections from each neuron (corresponding to the rows of $\boldsymbol{W}_k$) and single element groups of bias terms (corresponding to the entries of $\boldsymbol{b}_k$). 

We compare our methods against SGD, AdaGrad and Adam. We use minibatch size of $400$ for all methods. We use the built-in functions in Lasagne for SGD, AdaGrad and Adam. These methods use the subgradient of the overall objective \eqref{eqn:neural-network}. All of these methods have one learning rate parameter for tuning. We tune these parameters by trying the powers of $10$. We found that $\gamma_0 = \alpha = 1$ works well for TOS and \algo. For SGD and AdaGrad, we got the best performance when the learning rate parameter is set to $10^{-2}$, and for Adam we got the best results with $10^{-3}$. %

Remark that subgradient methods are known to destroy sparsity at the intermediate iterations. For instance, the subgradients of $\ell_1$ norm are fully dense. In contrast, TOS and \algo handle the regularizers through their proximal operators. The advantage of using a proximal method instead of subgradients is outstanding. TOS and \algo result in precisely sparse networks whereas other methods can only get approximately sparse solutions. The comparison becomes especially stark in group sparsity, with no clear discontinuity in the spectrum for other methods.

\section{Additional Numerical Experiments}

In this section, we present additional numerical experiments on isotonic regression and portfolio optimization problems. The experiments in this section are performed in \textsc{Matlab} R2018a with 2.6 GHz Quad-Core Intel Core i7 CPU. 

\subsection{Isotonic Regression}

In this section, we compare the empirical performance of the adaptive step-size in \Cref{sec:adaptos} with the analytical step-size in \Cref{sec:algorithm-nonsmooth}. We consider the isotonic regression problem with the $\ell_p$-norm loss:
\begin{align}
\label{eqn:supp-isotonic-regression}
\min_{x \in \R^n} \quad \tfrac{1}{p} \norm{Ax - b}_p^p \quad \mathrm{subject~to} \quad x_1 \leq x_2 \leq \ldots \leq x_n,
\end{align}
where $A \in \mathbb{R}^{m \times n}$ is a given linear map and $b \in \mathbb{R}^m$ is the measurement vector. 
Projection onto the order constraint in \eqref{eqn:supp-isotonic-regression} is challenging, but we can split it into two simpler constraints: 
\begin{align}
\label{eqn:supp-isotonic-regression-split}
\min_{x \in \R^n} \quad \tfrac{1}{p} \norm{Ax - b}_p^p \quad \mathrm{subject~to} \quad \left\{\begin{aligned} x_1 \leq x_2\\
x_3 \leq x_4 \\[-0.5em]
\vdots\phantom{~~x_1}
\end{aligned}\right\}
~~~\mathrm{and}~~~ 
\left\{\begin{aligned} x_2 \leq x_3\\
x_4 \leq x_5 \\[-0.5em]
\vdots\phantom{~~x_1}
\end{aligned}\right\}.
\end{align}

We demonstrate the numerical performance of the methods for various values of $p \in [1,2]$. 
Note that $p=1$ and $p=2$ capture the nonsmooth least absolute deviations loss and the smooth least squares loss respectively. For larger values of $p$, we expect \algo to exhibit faster rates by adapting to the underlying smoothness of the objective function. 

We generate a synthetic test setup. To this end, we set the problem size as $m=100$ and $n=200$. We generate right and left singular vectors of $A$ by computing the singular value decomposition of a random matrix with \textit{iid} entries drawn from the standard normal distribution. Then, we set the singular values according to a polynomial decay rule such that the $i$th singular vector is $1/i$. We generate $\smash{x^\natural \in \mathbb{R}^n}$ by sorting $n$ \textit{iid} samples from the standard normal distribution. Then, we compute the noisy measurements $\smash{b = Ax^\natural + 0.1 \xi}$ where the entries of $\xi$ is drawn \textit{iid} from the standard normal distribution. 

By considering a decaying singular value spectrum for $A$ we control the condition number and make sure the problem is not very easy to solve.  By adding noise, we ensure that the solution is not in the relative interior of the feasible set. Therefore, this experiment also supports our claim that \algo can achieve fast rates when the objective is smooth even if the solution does not lie in the interior of the feasible set. 

When the problem is nonsmooth, \textit{i.e.}, when $p < 2$, we use TOS with the analytical step-size in \Cref{sec:algorithm-nonsmooth} and the adaptive step-size in \Cref{sec:adaptos}. We choose $\alpha = \beta = \gamma_0 = 1$ without any tuning. When $p=2$, the problem is smooth so we also try TOS with the standard constant step-size $\gamma = 1/L_f$ in this setting. We run each algorithm for $10^5$ iterations. In order to find the ground truth $f_\star$ we solve the problem to very high precision by using CVX \citep{grant2014cvx} with the SDPT3 solver \citep{toh1999sdpt3}. 

We repeat the experiments with $20$ randomly generated data with different seeds and report the average performance in \Cref{fig:isotonic-regression}. This figure compares the performance we get by different step-size strategies in terms of objective suboptimality ($|f(z_t) - f_\star|/f_\star$) and infeasbility bound ($\norm{z_t - x_t}$). As expected, \algo performs better as $p$ becomes larger. Although it does not exactly match the performance of the fixed step-size $1/L_f$ when $f$ is smooth ($p=2$), remark that \algo does not require any prior knowledge on $L_f$ or $G_f$. %

\begin{figure}[t!]
\begin{center}
\includegraphics[width=\linewidth]{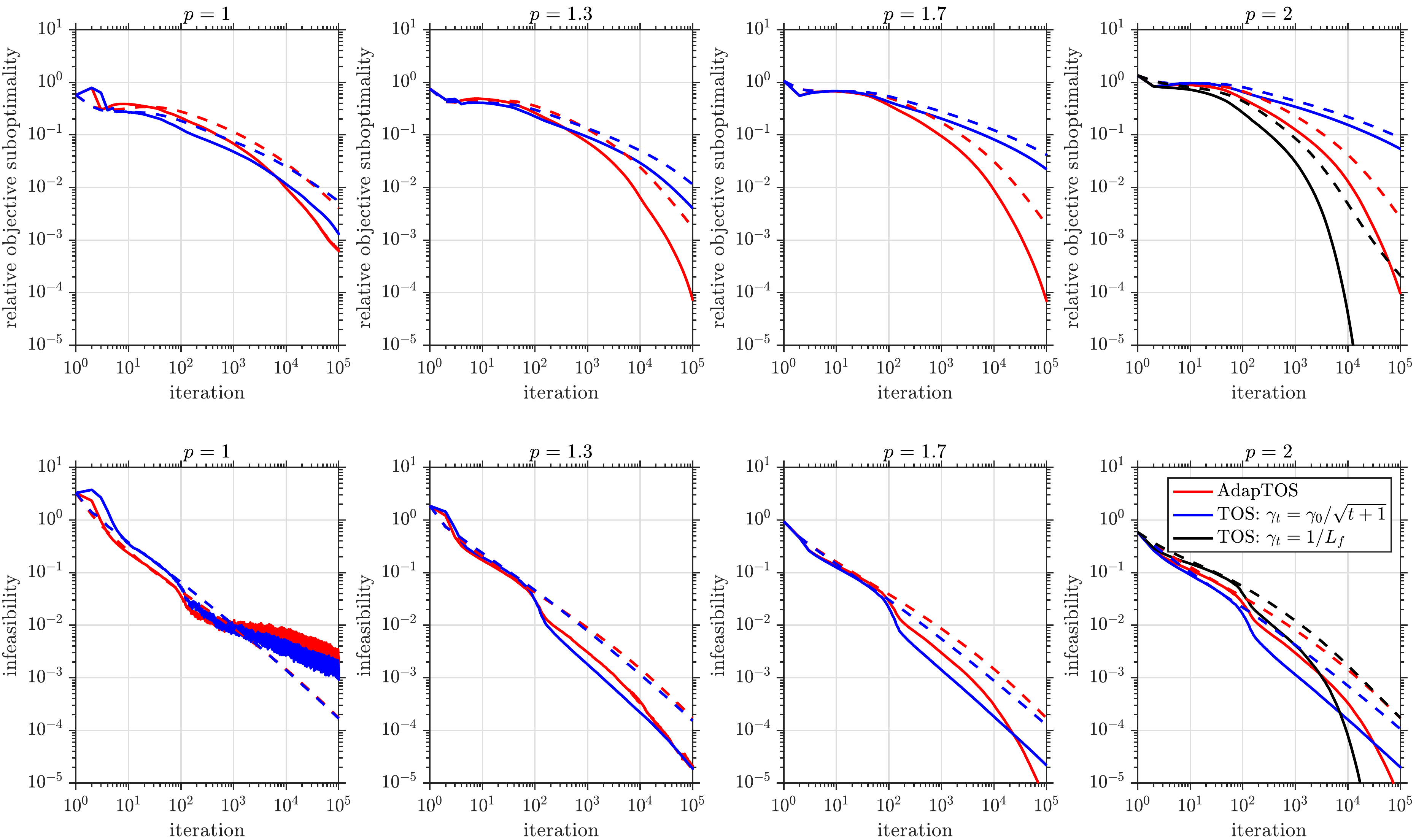}\vspace{-1em}
\caption{Comparison of the empirical performance of TOS with the analytical step-size in \Cref{sec:algorithm-nonsmooth} and the adaptive step-size in \Cref{sec:adaptos} on the isotonic regression problem in \eqref{eqn:supp-isotonic-regression} with the $\ell_p$-loss function for various $p \in [1,2]$. For larger values of $p$, \algo exhibits faster convergence rates by adapting to the underlying smoothness of the objective function. For $p=2$, the problem is smooth so we also consider the standard fixed step-size $\gamma = 1/L_f$ in this setting. Solid lines represent the last iteration and the dashed lines correspond to the ergodic sequences $\bar{x}_t$ and $\bar{z}_t$.}
\label{fig:isotonic-regression}
\end{center}
\end{figure}

\subsection{Portfolio Optimization}

In this section, we demonstrate the advantage of stochastic methods for machine learning problems. We consider the portfolio optimization with empirical risk minimization from Section~5.1 in \citep{yurtsever2016stochastic}:
\begin{align}
\label{eqn:supp-portfolio-optimization}
\min_{x \in \R^n} \quad \frac{1}{2}\sum_{i=1}^N |\ip{a_i}{x} - b|^2 \quad \mathrm{subject~to} \quad x \in \Delta \quad \text{and} \quad \ip{a_{\mathrm{av}}}{x} \geq b
\end{align}
where $\Delta$ is the unit simplex. Here $n$ is the number of different assets and $x \in \Delta$ represents a portfolio. The collection of $\{a_i\}_{i=1}^N$ represents the returns of each asset at different time instances, and the $a_\mathrm{av}$ is the average returns for each asset that is assumed to be known or estimated. Given a minimum target return $b \in \mathbb{R}$, the goal is to reduce the risk by minimizing the variance. As in \citep{yurtsever2016stochastic}, we set the target return as the average return over all assets, \textit{i.e.}, $b = \mathrm{mean}(a_{\mathrm{av}})$. 

In addition, we also consider a modification of \eqref{eqn:supp-portfolio-optimization} with the least absolute deviation loss, which is nonsmooth but known to be more robust against outliers:
\begin{align}
\label{eqn:supp-portfolio-optimization-nonsmooth}
\min_{x \in \R^n} \quad \sum_{i=1}^N |\ip{a_i}{x} - b| \quad \mathrm{subject~to} \quad x \in \Delta \quad \text{and} \quad \ip{a_{\mathrm{av}}}{x} \geq b.
\end{align}

We use $4$ different real portfolio datasets: Dow Jones industrial average (DJIA, 30 stocks for
507 days), New York stock exchange (NYSE, 36 stocks for 5651 days), Standard \& Poor's 500
(SP500, 25 stocks for 1276 days), and Toronto stock exchange (TSE, 88 stocks for 1258 days).\footnote{These four datasets can be downloaded from \href{http://www.cs.technion.ac.il/~rani/portfolios/}{http://www.cs.technion.ac.il/{\textasciitilde}rani/portfolios/}}

For both problems and each dataset, we run \algo with full (sub)gradients and stochastic (sub)gradients and compare their performances. We choose $\alpha = \beta = 1$ without tuning and run the algorithms for $10$ epochs. In the stochastic setting, we evaluate a (sub)gradient estimator from a single datapoint chosen uniformly at random with replacement at every iteration. We run the stochastic algorithm 20 times with different random seeds and present the average performance. To find the ground truth $f_\star$ we solve the problems to very high precision by using CVX \citep{grant2014cvx} with the SDPT3 solver \citep{toh1999sdpt3}. 
\Cref{fig:portfolio-optimization-smooth,fig:portfolio-optimization-nonsmooth} present the results of this experiment for \eqref{eqn:supp-portfolio-optimization} and \eqref{eqn:supp-portfolio-optimization-nonsmooth} respectively. 

\begin{figure}[p]
\begin{center}
\includegraphics[width=\linewidth]{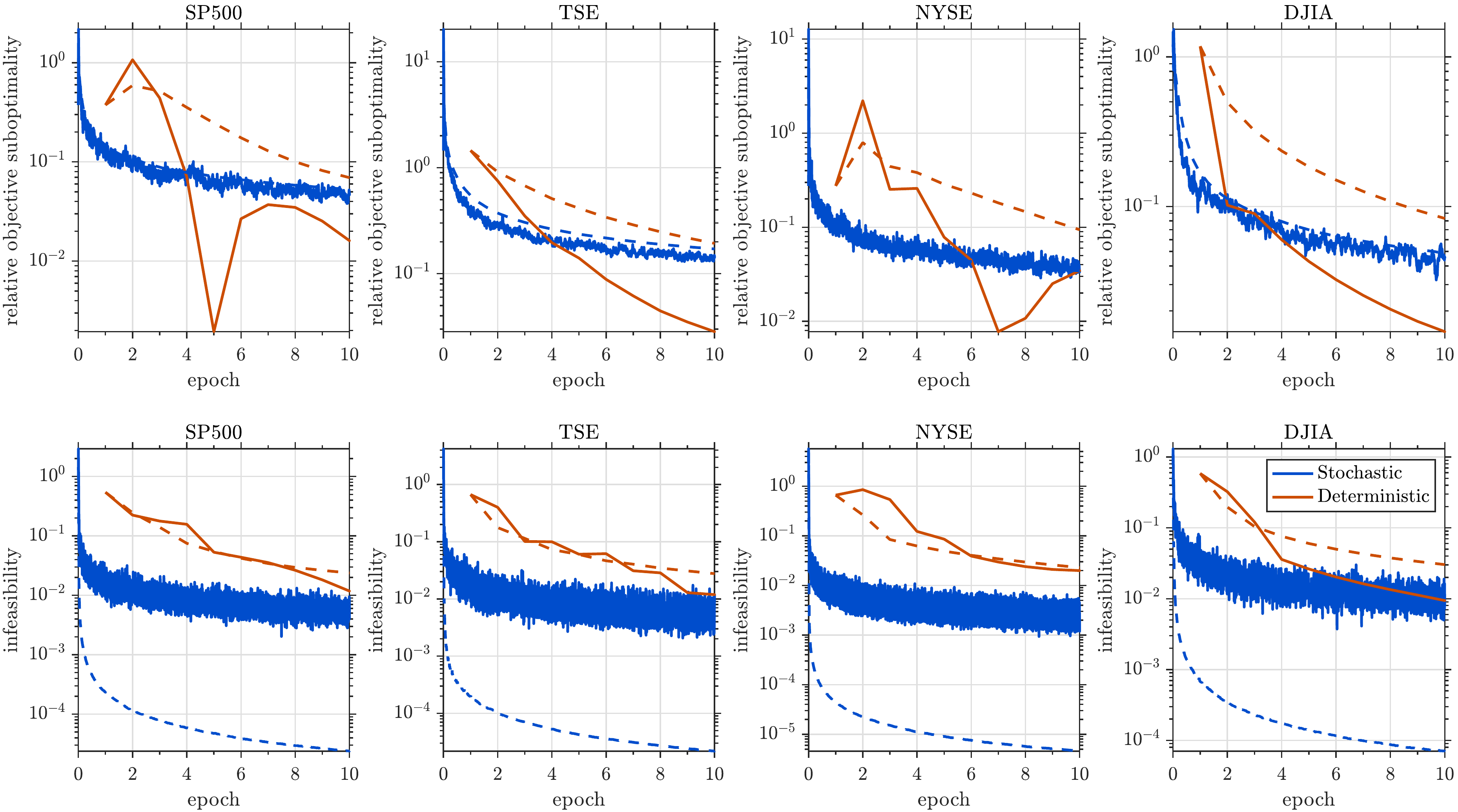}
\vspace{-1.5em}
\caption{Comparison of \algo with stochastic and deterministic gradients on the smooth portfolio optimization problem with least-squares loss in \eqref{eqn:supp-portfolio-optimization} for four different datasets. Solid and dashed lines represent the last and ergodic iterates respectively.}
\label{fig:portfolio-optimization-smooth}
\vspace{1.5em}

\includegraphics[width=\linewidth]{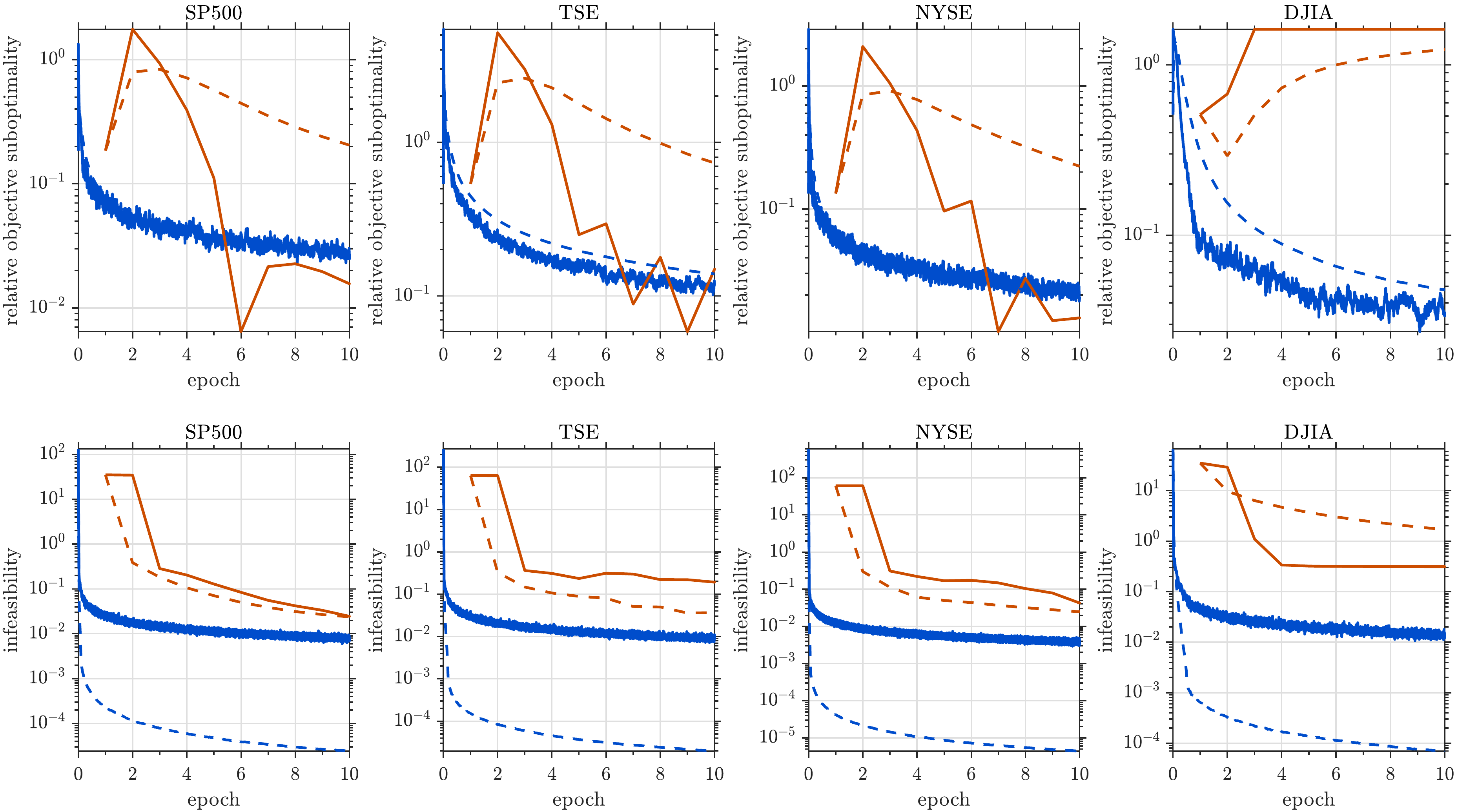}\vspace{-1em}
\caption{Comparison \algo with stochastic and deterministic gradients on the nonsmooth portfolio optimization problem with least absolute deviations loss in \eqref{eqn:supp-portfolio-optimization-nonsmooth} for four different datasets. Solid and dashed lines represent the last and ergodic iterates respectively.}
\label{fig:portfolio-optimization-nonsmooth}
\end{center}
\end{figure}

\end{appendices}
 
\section*{Acknowledgements}
\label{sec:acknowledgements}

The authors would like to thank Ahmet Alacaoglu for carefully reading and reporting an error in the preprint of this paper. 

Alp Yurtsever received support from the Swiss National Science Foundation Early Postdoc.Mobility Fellowship P2ELP2\_187955, from the Wallenberg AI, Autonomous Systems and Software Program (WASP) funded by the Knut and Alice Wallenberg Foundation, and partial postdoctoral support from the NSF-CAREER grant IIS-1846088. Suvrit Sra acknowledges support from an NSF BIGDATA grant (1741341) and an NSF CAREER grant (1846088).

\bibliography{NonsmoothTOS}
\bibliographystyle{icml2021} 

\end{document}